\documentclass{amsart}
\usepackage{amsfonts,amssymb,verbatim,amsmath,amsthm,latexsym,textcomp,amscd}
\usepackage[dvipsnames]{xcolor}
\usepackage{tikz,tikz-cd}
\usepackage{graphics,textcomp,hyperref}
\usepackage{graphicx}
\usepackage{url}
\usepackage{psfrag}
\usepackage{stackrel}
\usepackage{subfiles} 
\usepackage{todonotes}
\usepackage{enumitem}
\usepackage{mathtools}
\usepackage[capitalise]{cleveref}

\DeclareMathOperator{\Stab}{Stab}

\newtheorem{theorem}{Theorem}[section]
\newtheorem{prop}[theorem]{Proposition}
\newtheorem{lemma}[theorem]{Lemma}

\newtheorem{cor}[theorem]{Corollary}

\newtheorem{claim}[theorem]{Claim}
\newtheorem{qn}[theorem]{Question}

\newtheorem{theoremA}{Theorem}

\newtheoremstyle{named}{}{}{\itshape}{}{\bfseries}{.}{.5em}{\thmnote{#3}}
\theoremstyle{named}
\newtheorem*{namedtheorem}{Theorem}

\theoremstyle{definition}
\newtheorem{defn}[theorem]{Definition}
\newtheorem{rmk}[theorem]{Remark}

\newcommand{\hhat}{\widehat}

\newcommand{\natls}{{\mathbb N}}

\newcommand\AAA{{\mathcal A}}

\newcommand\BB{{\mathcal B}}
\newcommand\CC{{\mathcal C}}
\newcommand\DD{{\mathcal D}}
\newcommand\EE{{\mathcal E}}
\newcommand\FF{{\mathcal F}}
\newcommand\GG{{\mathcal G}}
\newcommand\HH{{\mathcal H}}

\newcommand\KK{{\mathcal K}}
\newcommand\LL{{\mathcal L}}
\newcommand\MM{{\mathcal M}}

\newcommand\PP{{\mathcal P}}

\newcommand\SSS{{\mathcal S}}

\newcommand\UU{{\mathcal U}}
\newcommand\VV{{\mathcal V}}

\newcommand\YY{{\mathcal Y}}
\newcommand\ZZ{{\mathcal Z}}

\newcommand\PMF{{\PP\kern-2pt\MM\FF}}

\newcommand\PML{{\PP\kern-2pt\MM\LL}}

\newcommand\ep{\epsilon}

\newcommand\Hyp{{\mathbb H}}

\newcommand\R{{\mathbb R}}

\newcommand{\bfC}{{\mathbf{C}}}

\newcommand{\diam}{\operatorname{diam}}

\newcommand{\fsubd}{\mathrel{{\scriptstyle\searrow}\kern-1ex^d\kern0.5ex}}
\newcommand{\bsubd}{\mathrel{{\scriptstyle\swarrow}\kern-1.6ex^d\kern0.8ex}}
\newcommand{\fsubeq}{\mathrel{\raise-.7ex\hbox{$\overset{\searrow}{=}$}}}
\newcommand{\bsubeq}{\mathrel{\raise-.7ex\hbox{$\overset{\swarrow}{=}$}}}

\renewcommand{\dh}{d_{\rm{Haus}}}

\makeatletter
\DeclareFontFamily{U}{tipa}{}
\DeclareFontShape{U}{tipa}{m}{n}{<->tipa10}{}
\newcommand{\arc@char}{{\usefont{U}{tipa}{m}{n}\symbol{62}}}%

\newcommand{\arc}[1]{\mathpalette\arc@arc{#1}}

\newcommand{\arc@arc}[2]{%
	\sbox0{$\m@th#1#2$}%
	\vbox{
		\hbox{\resizebox{\wd0}{\height}{\arc@char}}
		\nointerlineskip
		\box0
	}%
}
\makeatother

\newcommand{\set}[1]{\left\{ #1 \right\}}
\DeclareMathOperator{\Hdim}{Hdim}

\newcommand{\bbH}{\mathbb{H}}

\newcommand{\bbN}{\mathbb{N}}

\newcommand{\bbR}{\mathbb{R}}

\newcommand{\bbZ}{\mathbb{Z}}
\newcommand{\calS}{\mathcal{S}}
\newcommand{\calA}{\mathcal{A}}

\DeclareMathOperator{\bdr}{bdr}
\DeclareMathOperator{\intr}{int}
\DeclareMathOperator{\cl}{cl}
\DeclareMathOperator{\hull}{Hull}
\DeclareMathOperator{\Isom}{Isom}
\DeclareMathOperator{\Sh}{Sh}
\DeclareMathOperator{\br}{bar}
\DeclareMathOperator{\stab}{stab}
\DeclareMathOperator{\Aut}{Aut}
\DeclareMathOperator{\Comm}{Comm}
\newcommand{\whull}{\hull_w}
\RequirePackage[normalem]{ulem} 
\RequirePackage{color}\definecolor{RED}{rgb}{1,0,0}\definecolor{BLUE}{rgb}{0,0,1} 
\providecommand{\DIFdeltex}[1]{}                      
\providecommand{\DIFaddbegin}{} 
\providecommand{\DIFaddend}{} 
\providecommand{\DIFdelbegin}{} 
\providecommand{\DIFdelend}{} 
\providecommand{\DIFaddbeginFL}{} 
\providecommand{\DIFaddendFL}{} 
\providecommand{\DIFdelbeginFL}{} 
\providecommand{\DIFdelendFL}{} 
\newcommand{\DIFscaledelfig}{0.5}
\RequirePackage{settobox} 
\RequirePackage{letltxmacro} 
\newsavebox{\DIFdelgraphicsbox} 
\newlength{\DIFdelgraphicswidth} 
\newlength{\DIFdelgraphicsheight} 
\LetLtxMacro{\DIFOincludegraphics}{\includegraphics} 
\newcommand{\DIFaddincludegraphics}[2][]{{\color{blue}\fbox{\DIFOincludegraphics[#1]{#2}}}} 
\newcommand{\DIFdelincludegraphics}[2][]{
\sbox{\DIFdelgraphicsbox}{\DIFOincludegraphics[#1]{#2}}
\settoboxwidth{\DIFdelgraphicswidth}{\DIFdelgraphicsbox} 
\settoboxtotalheight{\DIFdelgraphicsheight}{\DIFdelgraphicsbox} 
\scalebox{\DIFscaledelfig}{
\parbox[b]{\DIFdelgraphicswidth}{\usebox{\DIFdelgraphicsbox}\\[-\baselineskip] \rule{\DIFdelgraphicswidth}{0em}}\llap{\resizebox{\DIFdelgraphicswidth}{\DIFdelgraphicsheight}{
\setlength{\unitlength}{\DIFdelgraphicswidth}
\begin{picture}(1,1)
\thicklines\linethickness{2pt} 
{\color[rgb]{1,0,0}\put(0,0){\framebox(1,1){}}}
{\color[rgb]{1,0,0}\put(0,0){\line( 1,1){1}}}
{\color[rgb]{1,0,0}\put(0,1){\line(1,-1){1}}}
\end{picture}
}\hspace*{3pt}}} 
} 
\LetLtxMacro{\DIFOaddbegin}{\DIFaddbegin} 
\LetLtxMacro{\DIFOaddend}{\DIFaddend} 
\LetLtxMacro{\DIFOdelbegin}{\DIFdelbegin} 
\LetLtxMacro{\DIFOdelend}{\DIFdelend} 
\DeclareRobustCommand{\DIFaddbegin}{\DIFOaddbegin \let\includegraphics\DIFaddincludegraphics} 
\DeclareRobustCommand{\DIFaddend}{\DIFOaddend \let\includegraphics\DIFOincludegraphics} 
\DeclareRobustCommand{\DIFdelbegin}{\DIFOdelbegin \let\includegraphics\DIFdelincludegraphics} 
\DeclareRobustCommand{\DIFdelend}{\DIFOaddend \let\includegraphics\DIFOincludegraphics} 
\LetLtxMacro{\DIFOaddbeginFL}{\DIFaddbeginFL} 
\LetLtxMacro{\DIFOaddendFL}{\DIFaddendFL} 
\LetLtxMacro{\DIFOdelbeginFL}{\DIFdelbeginFL} 
\LetLtxMacro{\DIFOdelendFL}{\DIFdelendFL} 
\DeclareRobustCommand{\DIFaddbeginFL}{\DIFOaddbeginFL \let\includegraphics\DIFaddincludegraphics} 
\DeclareRobustCommand{\DIFaddendFL}{\DIFOaddendFL \let\includegraphics\DIFOincludegraphics} 
\DeclareRobustCommand{\DIFdelbeginFL}{\DIFOdelbeginFL \let\includegraphics\DIFdelincludegraphics} 
\DeclareRobustCommand{\DIFdelendFL}{\DIFOaddendFL \let\includegraphics\DIFOincludegraphics} 
\RequirePackage{listings} 
\RequirePackage{color} 
\lstdefinelanguage{DIFcode}{ 
  moredelim=[il][\color{red}\sout]{\%DIF\ <\ }, 
  moredelim=[il][\color{blue}\uwave]{\%DIF\ >\ } 
} 
\lstdefinestyle{DIFverbatimstyle}{ 
	language=DIFcode, 
	basicstyle=\ttfamily, 
	columns=fullflexible, 
	keepspaces=true 
} 
\lstnewenvironment{DIFverbatim}{\lstset{style=DIFverbatimstyle}}{} 
\lstnewenvironment{DIFverbatim*}{\lstset{style=DIFverbatimstyle,showspaces=true}}{} 

\begin{document}

\title{Commensurated hyperbolic subgroups}

\author{Nir Lazarovich}

\address{Department of Mathematics
	Technion
	Haifa 32000
	Israel}

\email{lazarovich@technion.ac.il}

\author{Alex Margolis}

\address{Alex Margolis, Department of Mathematics, The Ohio State University,  Mathematics Tower,  231 W 18th Ave,  Columbus,  OH  43210, USA}
\email{margolis.93@osu.edu}
\urladdr{https://sites.google.com/view/alexmargolis/}

\author{Mahan Mj}

\address{School of Mathematics, Tata Institute of Fundamental Research, Mumbai-40005, India}
\email{mahan@math.tifr.res.in}
\email{mahan.mj@gmail.com}
\urladdr{http://www.math.tifr.res.in/~mahan}

\subjclass[2010]{20F65, 20F67}
\keywords{hyperbolic group, JSJ decomposition, commensurated subgroup}

\thanks{AM and MM were supported in part by  the Institut Henri Poincare  (UAR 839 CNRS-Sorbonne Universite), LabEx CARMIN, ANR-10-LABX-59-01, during their participation in the trimester program "Groups acting on fractals, Hyperbolicity and Self-similarity", April-June 2022. NL and MM thank ICTS Bengaluru for its hospitality during a program on "Probabilistic Methods in Negative Curvature" in February 2023. NL was partially supported by the Israeli Science Foundation (grant no. 1576/23).
		MM is supported by  the Department of Atomic Energy, Government of India, under project no.12-R\&D-TFR-5.01-0500,  by an endowment of the Infosys Foundation.
	and by   a DST JC Bose Fellowship. }

\begin{abstract}
We show that if $H$ is a non-elementary hyperbolic commensurated subgroup of infinite index in a hyperbolic group $G$, then $H$  is virtually a free product of hyperbolic surface groups and free groups. 
We prove that whenever a one-ended hyperbolic group $H$ is a fiber of a non-trivial hyperbolic bundle then $H$ virtually splits over a 2-ended subgroup.
\end{abstract}

\maketitle

\date{\today}

\section{Introduction}\label{sec-intro}
Our starting point in this paper is the following deep theorem that is a consequence of  work of Bestvina, Paulin, Rips and Sela; see \cite[p. 379]{mitra-endlam} for an explanation.

\begin{theorem}\label{intro-hypnormal}
Let $H$ be a hyperbolic normal subgroup of infinite index in a hyperbolic group $G$. Then $H$ is virtually a free product of hyperbolic surface groups and free groups. 
\end{theorem}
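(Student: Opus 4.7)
The plan is to exploit the fact that the conjugation action of $G$ on $H$ provides a large group of outer automorphisms of $H$, and combine this with the structure theory of $\operatorname{Out}$ for hyperbolic groups (Bestvina--Paulin, Rips, Sela) and the canonical JSJ decomposition of $H$ over its $2$-ended subgroups.

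\emph{Set-up and reduction.} Since $H$ is non-elementary hyperbolic, $Z_G(H)$ is finite, so the conjugation homomorphism $Q := G/H \to \operatorname{Out}(H)$ has finite kernel; since $Q$ is infinite, this produces an infinite subgroup $\overline{Q} \leq \operatorname{Out}(H)$. If $H$ has more than one end, Stallings--Dunwoody accessibility expresses $H$ as a finite graph of groups with finite edge groups and one-ended or finite vertex groups. Since $\overline{Q}$ permutes the $H$-conjugacy classes of the one-ended vertex groups, a finite-index subgroup of $G$ stabilizes each such class; hence each one-ended vertex group $V$ is a non-elementary hyperbolic normal subgroup of infinite index in the hyperbolic overgroup $N_G(V)$ (which is quasiconvex in $G$). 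It therefore suffices to prove the theorem in the one-ended case, since free factors from the Stallings decomposition contribute the free-group factors to the desired free product.

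\emph{One-ended case and the no-rigid-vertex claim.} Assume $H$ is one-ended. Let $\TT$ be the Bass--Serre tree of the canonical JSJ decomposition of $H$ over $2$-ended subgroups (Bowditch, Rips--Sela); its vertex stabilizers are $2$-ended, maximal hanging Fuchsian (MHF), or rigid. By canonicity, $\overline{Q}$ acts on $\TT$ preserving vertex types. The key claim is the absence of rigid vertices. Suppose $R$ is one. Then $R$ is quasiconvex in $H$, hence quasiconvex hyperbolic in $G$. A finite-index subgroup $G_0 \leq G$ preserves the $H$-conjugacy class of $R$; the normalizer $N_{G_0}(R)$ is then hyperbolic with $R$ normal inside it, and the infinite image of $\overline{Q}$ in $\operatorname{Out}(R)$ (together with finiteness of $Z_G(R)$) forces $N_{G_0}(R)/R$ to be infinite. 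Induct on a suitable complexity, such as the number of non-$2$-ended JSJ vertices: the theorem applied to $R \lhd N_{G_0}(R)$ expresses $R$ as virtually a free product of surface and free groups; since $R$ is one-ended, this makes $R$ virtually a closed surface group, contradicting rigidity.

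\emph{Conclusion.} Hence the JSJ of $H$ has only MHF and $2$-ended vertex groups, and the pieces assemble (along the $2$-ended edge groups) into the fundamental group of a closed hyperbolic $2$-orbifold; thus $H$ is virtually a closed hyperbolic surface group, completing the theorem. The hardest step will be verifying that the infinite image $\overline{Q}$ really forces $N_{G_0}(R)/R$ to be infinite rather than being absorbed into Dehn twists along incident JSJ edges or mapping classes of other MHF vertices: one has to isolate the action of $\overline{Q}$ on the $R$-vertex stabilizer and compare it with the outer automorphism group of $R$ relative to incident edge groups, which is finite for rigid $R$.
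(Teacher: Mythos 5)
Your reduction and your rigid-vertex step both rest on the inference that subgroups quasiconvex in $H$ (or their normalizers) are quasiconvex and hyperbolic in $G$, and this is exactly where the argument breaks. Since $H$ is an infinite normal subgroup of infinite index, $\operatorname{Comm}_G(H)=G$, so \cref{lem: comm of quasiconvex} shows $H$ is \emph{not} quasiconvex in $G$; quasiconvexity therefore does not pass through $H$, and ``$R$ is quasiconvex in $H$, hence quasiconvex hyperbolic in $G$'' is a non sequitur. The hyperbolicity (let alone quasiconvexity) of $N_G(V)$ and $N_{G_0}(R)$ is likewise unjustified: a finitely generated subgroup of a hyperbolic group need not be hyperbolic. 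Worse, your two key assertions are mutually inconsistent: if $R$ really were infinite and quasiconvex in $G$, then \cref{lem: comm of quasiconvex} would give $[\operatorname{Comm}_G(R):R]<\infty$, so $N_{G_0}(R)/R$ could not be infinite. Finally, the induction is not well-founded: applying the theorem to $R\lhd N_{G_0}(R)$ needs $N_{G_0}(R)$ hyperbolic, and rigidity of $R$ concerns splittings of $H$, not of $R$, so $R$ may have non-2-ended JSJ vertices of its own and the proposed complexity need not decrease.

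The concluding assembly is also false as stated: a one-ended hyperbolic group whose JSJ has only maximal hanging Fuchsian and 2-ended vertices need not be virtually a surface group (take three copies of a once-punctured-torus group amalgamated along their boundary $\mathbb{Z}$); one must actually show the JSJ has no edges. That is how the paper argues: the 2-ended edge groups are quasiconvex in $G$ outright (2-ended subgroups of hyperbolic groups are always quasiconvex, no transitivity through $H$ needed), the collection of their conjugates is invariant under the automorphisms induced from $G$ by canonicity of the JSJ (\cref{thm: bowditch jsj}), and the commensurator bound of \cref{lem: comm of quasiconvex} --- the normal-case analogue of \cref{comm-inv quasiconvex collection,JSJ of comm subgp is trivial} --- forces the JSJ to be trivial. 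One is then left with each one-ended factor $H_v$ either virtually a closed surface group or a single rigid vertex; in the latter case the infinite image of $N_G(H_v)$ in $\operatorname{Out}(H_v)$ (obtained from $N_G(H_v)/H_v$ infinite and finiteness of centralizers, with no hyperbolicity or quasiconvexity of $N_G(H_v)$ required) feeds the Bestvina--Paulin rescaling limit and the Rips machine to produce a splitting over a 2-ended subgroup, a contradiction. This rescaling/Rips step is precisely the ingredient your proposal tries to bypass, and the inductive substitute does not replace it.
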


A subgroup $H\le G$ is \emph{commensurated} (sometimes called \emph{almost-normal}) if for all $g\in G$, $gHg^{-1}$ is commensurable to $H$, i.e. their intersection $H\cap gHg^{-1}$ has finite index in both $H$ and $gHg^{-1}$.
In this paper, we extend Theorem~\ref{intro-hypnormal} to commensurated subgroups.

\begin{theoremA}\label{intro-hypcommens}
	Let $H$ be a hyperbolic commensurated subgroup of infinite index in a hyperbolic group $G$. Then $H$  is virtually a free product of hyperbolic surface groups and free groups. 
\end{theoremA}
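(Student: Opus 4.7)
The plan is to follow the Bestvina--Paulin--Rips--Sela proof of \Cref{intro-hypnormal}, with the conjugation action of $G$ on $H$ by automorphisms (which requires $H$ normal) replaced by the action by \emph{virtual} automorphisms that commensuration provides. For any $g\in G$, the conjugation $h\mapsto ghg^{-1}$ restricts to an isomorphism $H\cap g^{-1}Hg\to H\cap gHg^{-1}$ of finite-index subgroups of $H$, defining an element of the abstract commensurator $\Comm(H)$. This yields a homomorphism $\phi\colon G\to\Comm(H)$, whose composition with the quotient by $\operatorname{Inn}(H)$ gives $\bar\phi\colon G\to\Comm(H)/\operatorname{Inn}(H)$; this is the analogue of the map $G/H\to \operatorname{Out}(H)$ used in the normal case.

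First I would reduce to $H$ one-ended: Grushko's theorem decomposes $H$ as a free product of one-ended and cyclic factors, and these are permuted by $G$ up to conjugation and commensuration, so after replacing $G$ by a finite-index subgroup and $H$ by a one-ended factor, the commensurated structure descends. The key preliminary step is to show that $\bar\phi(G)$ is infinite. If it were finite, then on a finite-index $G'\le G$ each conjugation would agree in $\Comm(H)$ with the action of some $h_g\in H$, meaning $h_g^{-1}g$ centralises a finite-index subgroup of $H$ in $G$. Since $H$ is non-elementary hyperbolic and $G$ is hyperbolic, such centralisers are finite, forcing $G'\subseteq HF$ for a finite $F\le G$, hence $[G:H]<\infty$, a contradiction.

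Next I would adapt the Bestvina--Paulin construction to the sequence of pairwise $\operatorname{Inn}(H)$-inequivalent virtual automorphisms $\psi_n=\phi(g_n)$ produced above. After passing to a subsequence one arranges the domains $H_n:=H\cap g_n^{-1}Hg_n$ uniformly and rescales by the minimum displacement of a fixed finite generating set of $H$; the resulting Gromov--Hausdorff limit yields a non-trivial minimal action of $H$ on an $\R$-tree $T$. Gromov-hyperbolicity of $H$ forces arc stabilisers of $T$ to be virtually cyclic, so Rips--Sela theory provides a non-trivial splitting of $H$ over a virtually cyclic subgroup. Finally, iterating on the vertex groups of the Bowditch JSJ decomposition of $H$ and using that rigid vertex groups have finite outer automorphism group, one rules out rigid vertices entirely; only QH (virtually surface-with-boundary) and two-ended vertices remain, and it follows that $H$ is virtually a free product of hyperbolic surface groups and free groups.

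The main obstacle is the Bestvina--Paulin step: the classical convergence argument assumes a fixed group acting, whereas here the domain of each virtual automorphism varies. Handling this requires a careful diagonal argument to secure a uniform generating set and basepoints across the sequence, and control of the scaling factor, in order to ensure that the limit action on $T$ is non-trivial, small in the sense required by Rips theory, and actually an action of the whole group $H$ and not merely of an intersection $\bigcap_n H_n$ which could a priori fail to have finite index.
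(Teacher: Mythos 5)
Your proposal founders on exactly the step you flag at the end, and the gap is not a technicality that a ``careful diagonal argument'' can repair. In the normal case the Bestvina--Paulin rescaling limit works because every $\psi_n$ is an honest automorphism of $H$, so each twisted action is an action of the fixed group $H$ and the limit inherits one. In the commensurated case $\psi_n=\phi(g_n)$ is only defined on $H_n=H\cap g_n^{-1}Hg_n$, and the indices $[H:H_n]$ need not be bounded along any subsequence of pairwise inequivalent commensurations; the set of elements lying in all but finitely many $H_n$ can be trivial or of infinite index, and no subsequence extraction produces a single finite-index subgroup of $H$ contained in cofinally many domains. Consequently the rescaled limit is an $\R$-tree with no group action on it at all (not even of a finite-index subgroup of $H$), so the Rips machine has nothing to act on. This is precisely the obstruction the paper points out when explaining why the proof of \cref{intro-hypnormal} does not carry over, and it is why the paper replaces the rescaling-limit step by an entirely different mechanism: viewing $G$ as a metric graph bundle over the Cayley--Abels graph of $(G,H)$ (\cref{prop-bdl}), restricting to a ray, flowing the weak hull of the limit set of a one-ended piece (\cref{cor:flowedisMB}), and then deriving the virtual splitting over a two-ended subgroup from \cref{intro-main} via Cannon--Thurston maps, the dendrite structure of the bundle boundary, polynomial growth of hulls of Cannon--Thurston fibers, and the Hausdorff-dimension lower bound for boundary separators (\cref{prop: Hdim of qc separator}). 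Your proposal contains no substitute for this machinery, so the central splitting step is missing.

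Two further points in your reduction also need repair. First, $H$ may have torsion, so Grushko should be replaced by the Stallings--Dunwoody end-decomposition over finite edge groups, as in the paper. Second, a one-ended factor $H_v$ of $H$ is in general \emph{not} commensurated in any finite-index subgroup of $G$, so ``the commensurated structure descends'' is unjustified; the paper avoids ever claiming this by keeping the ambient bundle for the whole group $H$ and working with the flowed image of $\whull(\Lambda_{F_0}(H_v))$, which is quasiconvex and fibers over the ray with fibers quasi-isometric to $H_v$. The parts of your outline that do survive are the reduction of the JSJ to a trivial one (the paper does this with \cref{lem: comm of quasiconvex} and commensurable invariance of Bowditch's JSJ edge groups, close in spirit to your ``$\bar\phi(G)$ infinite'' step) and the final assembly into a virtually free product; but without a working replacement for the Bestvina--Paulin limit, the argument does not go through.
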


A group $G$ as in the theorem gives rise to a metric graph bundle (in the sense of \cite{mahan-sardar} or a  coarse bundle in the sense of \cite{margolis-gt}) whose fibers are quasi-isometric to $H$. We refer the reader to \cref{sec-mbdl-dendrite} for definitions (especially \cref{defn-mgbdl}).
The key ingredient of the proof of \cref{intro-hypcommens} is the following more geometric theorem about such bundles.

\begin{theoremA}\label{intro-main}
Let $H$ be a one-ended hyperbolic group. If $H$ is quasi-isometric to a fiber of a hyperbolic metric graph bundle $p: X \to B$ with controlled hyperbolic fibers, bounded valence and unbounded base, then $H$ virtually splits over a 2-ended subgroup.
\end{theoremA}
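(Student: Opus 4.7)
The plan is to combine the Cannon--Thurston / ladder theory for metric graph bundles of \cite{mahan-sardar} with Bowditch's theorem characterizing splittings of one-ended hyperbolic groups over 2-ended subgroups via local cut points in the Gromov boundary. The strategy is to exploit unboundedness of $B$ to construct a non-degenerate ending lamination in $\partial H$, and then to upgrade this to a local cut point.

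First I would fix a basepoint $b_0 \in B$, identify the fiber $F_0 := p^{-1}(b_0)$ with a Cayley graph of $H$ via a quasi-isometry, and invoke the combination theorem of \cite{mahan-sardar} to produce a continuous Cannon--Thurston map $\iota : \partial H \to \partial X$. Using unboundedness of $B$, pick a geodesic ray $r : [0, \infty) \to B$ with $r(0) = b_0$. By Mitra's ladder construction (as adapted to metric graph bundles in \cite{mahan-sardar}) the preimage $\mathcal{L}_r := p^{-1}(r)$ is uniformly quasi-convex in the hyperbolic space $X$. Since $\mathcal{L}_r$ is unbounded, it accumulates on a nonempty compact set $\Lambda_r \subset \partial X$, and I define $\tilde\Lambda_r := \iota^{-1}(\Lambda_r) \subset \partial H$, the \emph{ending lamination} attached to $r$.

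The main geometric step is to show that $\tilde\Lambda_r$ contains at least two distinct points. Two geodesic rays in $F_0$ whose boundary limits land in $\Lambda_r$ are, up to bounded error, shadowed by $\mathcal{L}_r$ after being ``flowed'' along $r$; by the flaring inequality (equivalent, in this setting, to hyperbolicity of $X$), distinct rays that flare apart in $F_0$ cannot both accumulate on $\Lambda_r$, while one-endedness of $H$ produces a plentiful supply of non-asymptotic rays in $F_0$ whose flared images collapse into $\mathcal{L}_r$, forcing Cannon--Thurston identifications. Once at least two points of $\partial H$ are identified by $\iota$, I would use the dendritic structure developed in \cref{sec-mbdl-dendrite}, applied $H$-equivariantly via the coarse commensuration action on $\partial H$, to promote $\tilde\Lambda_r$ and its orbit to a dense family of cut pairs of $\partial H$. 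Combined with local connectedness of $\partial H$ this yields a local cut point, and Bowditch's theorem then furnishes a splitting of $H$ over a 2-ended subgroup; passing to a finite-index subgroup if necessary gives the required virtual splitting.

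The hardest step will be the quantitative non-degeneracy of $\tilde\Lambda_r$: producing a genuine Cannon--Thurston identification in $\partial H$ rather than merely embedding $\partial H$ injectively into $\partial X$. This requires the hyperbolicity of $X$ in its sharp form as flaring in the bundle, together with the one-endedness of $H$ to rule out trivial escapes out of the ladder. The secondary step of upgrading Cannon--Thurston identifications to local cut points is technical but should follow from the dendritic formalism of \cref{sec-mbdl-dendrite}, which I would treat largely as a black box.
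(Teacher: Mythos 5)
There is a genuine gap, and it sits exactly where you defer to a black box. First, your key upgrading step invokes an ``$H$-equivariant'' use of the dendrite formalism ``via the coarse commensuration action on $\partial H$,'' but in \cref{intro-main} there is no group action at all: $H$ is merely quasi-isometric to a fiber, nothing acts on $X$ compatibly with the bundle, and no commensuration is present (that belongs to the setting of \cref{intro-hypcommens}). The paper stresses this point in the introduction: the absence of an action is precisely why the Bestvina--Paulin/Rips strategy for normal subgroups fails here, so there is no orbit with which to ``promote $\tilde\Lambda_r$ to a dense family of cut pairs.'' Second, even granting a nontrivial Cannon--Thurston identification, identified points do not give cut pairs or local cut points of $\partial H$. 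What one actually gets is that the preimage $\ZZ=\hat i^{-1}(\xi)$ of a cut point $\xi$ of the dendrite $\hat i(\partial H)\subseteq\partial X$ is a closed \emph{separating subset} of $\partial H$, which a priori can be large (e.g.\ Cantor-like). The entire content of the paper's proof is to control its size: flaring plus \emph{bounded valence} (a hypothesis your outline never uses) show the weak hull of $\ZZ$ has polynomial growth (\cref{lem:polynomial growth of horospheres}), hence $\ZZ$ has Hausdorff dimension $0$ (\cref{pos hdim implies exponential growth}), and then the new quantitative result \cref{prop: Hdim of qc separator} -- every separator of a boundary without local cut points has Hausdorff dimension bounded below, proved via a uniform no-local-cut-point lemma and a bi-Lipschitz Cantor set -- forces a local cut point, after which \cref{thm: bowditch jsj} applies. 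Your proposal has nothing playing the role of this dimension/growth dichotomy, and without it (or a substitute) the passage from ``CT identification'' to ``local cut point'' does not go through.

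A smaller but real issue: as written, $\mathcal{L}_r=p^{-1}(r)$ contains the whole fiber $F_0$, so its limit set contains $\iota(\partial H)$ and $\tilde\Lambda_r=\partial H$; your ``main geometric step'' (showing $\tilde\Lambda_r$ has two points) is then vacuous and does not isolate what is actually needed. The paper instead handles the degenerate case where the CT image is a single point by the growth argument (singleton image would force polynomial growth of $H$, impossible for a one-ended hyperbolic group), and otherwise works with the preimage of a cut point of the dendrite image, not with the limit set of $p^{-1}(r)$.
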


\subsection{About the proofs.} 

\medskip

A comment is in order pertaining to the similarities and differences in the techniques used in
the proofs of \cref{intro-hypnormal,intro-hypcommens}.
Using Stallings' theorem on ends of groups \cite{stallings1968torsion}, and 
Dunwoody's accessibility result \cite{dunwoody1985accessibility},  both proofs  start with the maximal graph of groups decomposition with finite edge groups.
Uniqueness of this decomposition reduces the problem to the case where each
factor $H_v$  of this  Dunwoody-Stallings decomposition is one-ended.
The next step in both proofs is to use the canonical JSJ decomposition of Rips-Sela-Bowditch \cite{rips-sela,Sela,rips-sela-jsj,bowditch-cutpts}. The output of the JSJ decomposition when
$H_v$ is \emph{not virtually a surface group}  is a graph of groups, where 
\begin{enumerate}
\item each edge group is 2-ended
\item each vertex group is either rigid, or maximal  hanging Fuchsian, or 2-ended.
\end{enumerate}
By a theorem of Bowditch \cite{bowditch-cutpts} refining the decomposition of Rips-Sela \cite{rips-sela,Sela,rips-sela-jsj}, the above decomposition is unique, and can, in fact be read 
off topologically from the boundary $\partial H_v$. Since $\partial H_v$ is invariant under quasi-isometries, automorphisms preserve the above JSJ decomposition. In a slightly
subtler sense, so do commensurations. This forces the above JSJ decomposition to be trivial, i.e.\ it has no edge groups.
Thus, to conclude that $H_v$ is virtually a surface group it remains only to show that it splits over a 2-ended subgroup. At this stage, the proofs of \cref{intro-hypnormal,intro-hypcommens}
diverge.

 In the proof of \cref{intro-hypnormal}, one takes a rescaling limit \`a la Bestvina \cite{bestvina-duke} and Paulin \cite{paulin-inv} to obtain a non-trivial action of $H_v$ on an $\R-$tree. Then, using the Rips machine \cite{BF-rtrees}, one concludes that $H_v$ splits over a 2-ended subgroup. 
 In the proof of \cref{intro-hypcommens}, a similar strategy would fail, as the limiting $\R$-tree does not usually carry an action of $H_v$. 
 Instead, we adopt a coarse bundle perspective in the spirit of \cite{mahan-sardar,margolis-gt}. Since $H$ is commensurated in $G$, we can view $G$ as a bundle whose fibers are isometric to $H$. From this bundle we obtain a hyperbolic bundle $p:X\to [0,\infty)$ whose fibers are quasi-isometric to $H_v$. The desired splitting of $H_v$ will now follow from \cref{intro-main}.

 We sketch here a proof of \cref{intro-main}: Assume $H$ and $p:X\to B$ are as in \cref{intro-main}.
By Bowditch \cite{bowditch-stacks}, $\partial X$ is a dendrite. By \cite{mahan-sardar}, there is a Cannon-Thurston map $\partial i: \partial H \to \partial X$. The preimage $\ZZ=\partial i^{-1}(\xi)$ of a separating point $\xi$ in the dendrite $\partial X$ is a separating subset of $\partial H$. \cref{lem:polynomial growth of horospheres} shows that its weak convex hull $\CC_0$ has polynomial growth in $H$. \cref{pos hdim implies exponential growth} implies that the Hausdorff dimension of $\ZZ$ is 0. Finally, \cref{prop: Hdim of qc separator} shows if $H$ does not virtually split over a 2-ended subgroup then the Hausdorff dimension of any separating subset is positive. Hence $H$ must virtually split over a 2-ended subgroup. We state 
\cref{prop: Hdim of qc separator} explicitly here as it might be of independent interest:

 \begin{namedtheorem}[\cref{prop: Hdim of qc separator}]\label{intro-growth}
		Let $H$ be a one-ended hyperbolic group which does not virtually split over a 2-ended subgroup. For a visual metric on $\partial H$, there exists $\epsilon>0$ such that if $\ZZ\subseteq \partial H$ is separating then $\ZZ$ has Hausdorff dimension 
		at least $\ep$ with respect to this visual metric.
	 \end{namedtheorem}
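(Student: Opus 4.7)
The plan is to argue by contradiction, combining a topological input from Bowditch's JSJ theorem with a metric/dynamical input coming from the approximate self-similarity of the Gromov boundary. Since $H$ is one-ended hyperbolic and does not virtually split over any $2$-ended subgroup, Bowditch's theorem on cut points and canonical splittings \cite{bowditch-cutpts} implies that $\partial H$ has neither a local cut point nor a cut pair. In particular $\partial H$ is a Peano continuum with the property that no pair of points separates it. Fix a visual metric $d$ on $\partial H$ with parameter $a>1$; by Coornaert the Patterson--Sullivan measure is Ahlfors $Q$-regular on $(\partial H, d)$, where $Q = \Hdim(\partial H, d) > 0$. The $H$-action on $\partial H$ is uniformly quasi-M\"obius and cocompact on distinct pairs, so $(\partial H, d)$ is approximately self-similar: any ball $B(x,r)$ with $r$ small can be mapped, via some $g\in H$ with uniformly bounded distortion, onto a set of diameter comparable to $\diam(\partial H)$.

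Suppose for contradiction that no $\ep>0$ works. Then there exists a sequence of separating sets $\mathcal{Z}_n \subseteq \partial H$ with $\Hdim(\mathcal{Z}_n) \to 0$. Write $\partial H \setminus \mathcal{Z}_n = A_n \sqcup B_n$ as a disjoint union of two nonempty open sets. Using approximate self-similarity, after replacing each $\mathcal{Z}_n$ by an $H$-translate we may arrange a uniform $r_0 > 0$ and points $a_n \in A_n$, $b_n \in B_n$ with $B(a_n,r_0) \subseteq A_n$ and $B(b_n,r_0) \subseteq B_n$.

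The key geometric ingredient is then a uniform lower bound on the (combinatorial or analytic) $p$-modulus of the family $\Gamma_n$ of continua in $\partial H$ joining $B(a_n, r_0)$ to $B(b_n, r_0)$, for some fixed $p \in (0, Q)$ independent of $n$. This is where the no-local-cut-point and no-cut-pair hypotheses of Bowditch are converted into a quantitative abundance of connecting continua at every scale, combined with the self-similar $H$-dynamics; bounds of this flavour on hyperbolic-group boundaries go back to work of Bonk--Kleiner, Keith--Kleiner, and Mackay on the combinatorial Loewner property and conformal dimension. Any continuum in $\Gamma_n$ must meet $\mathcal{Z}_n$, for otherwise it would be a connected subset of the disconnected open set $A_n \sqcup B_n$. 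A standard coarea-type inequality for modulus then bounds the $p$-modulus of $\Gamma_n$ by a constant times the $p$-dimensional Hausdorff content of $\mathcal{Z}_n$. Since $\Hdim(\mathcal{Z}_n) \to 0$, this content tends to $0$ for every fixed $p > 0$, contradicting the uniform positive modulus bound. This contradiction forces the desired uniform $\ep > 0$ to exist.

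The main obstacle, and the technical heart of the proof, is the uniform modulus lower bound: one must convert the purely topological conclusions of Bowditch's JSJ theorem (no local cut points, no cut pairs) into a quantitative metric statement about the richness of the family of continua joining two balls, uniformly at every scale. All other steps are formal: the reduction to a uniform scale follows immediately from approximate self-similarity of the $H$-action on $\partial H$, and the passage from modulus to Hausdorff dimension is a routine coarea inequality.
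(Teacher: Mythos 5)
There is a genuine gap at the step you yourself identify as the technical heart: the uniform lower bound on the $p$-modulus of the family of continua joining $B(a_n,r_0)$ to $B(b_n,r_0)$. For a general one-ended hyperbolic group whose boundary has no local cut points (or cut pairs), no such Loewner-type or combinatorial-Loewner-type estimate is available in the literature. The works of Bonk--Kleiner, Keith--Kleiner, Bourdon--Kleiner and Mackay that you invoke establish the combinatorial Loewner property, or modulus lower bounds of this kind, only for special classes of boundaries (round Sierpi\'nski carpets, certain hyperbolic buildings and Coxeter groups, etc.); the statement that every approximately self-similar boundary without local cut points satisfies such a quantitative connectivity estimate is precisely Kleiner's open conjecture, not a theorem you may cite. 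So the step that is supposed to convert Bowditch's topological conclusion into a quantitative one is exactly the unproved part, and the rest of your argument (self-similar renormalisation, coarea-type comparison of modulus with the Hausdorff content of $\ZZ_n$ --- itself not quite routine, since covers of $\ZZ_n$ realizing small content live at many different scales while a fixed admissible function for the modulus does not) cannot compensate for it. As written, the proposal reduces the proposition to an open problem.

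For comparison, the paper avoids modulus altogether and stays elementary: from Bowditch one only takes that $\partial H$ has no local cut points, and this is upgraded (via shadows, $H$-equivariance and a compactness argument, \cref{lem: uniform no local cut points}) to a \emph{uniform} statement that annuli $B[\xi,r]\setminus B(\xi,\lambda r)$ do not disconnect locally, with $\lambda$ and $r_0$ independent of $\xi$ and $r$. Then, replacing $\ZZ$ by a \emph{minimal} closed separating subset (Zorn's lemma), one shows that $\ZZ$ must meet every such annulus centered at each of its points; iterating this at scales $\tau^N r_0$ produces a bi-Lipschitz copy of a Cantor set inside $\ZZ$, giving the explicit uniform bound $\Hdim(\ZZ)\ge -\log 2/\log\tau$. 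If you want to salvage your approach, you would either have to restrict to classes of groups where the combinatorial Loewner property is known, or find a substitute for the modulus bound; the paper's annulus-plus-minimal-separator argument is such a substitute.
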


\subsection{Examples of commensurated subgroups} We conclude this introduction by recalling examples of hyperbolic commensurated subgroups of hyperbolic groups.
In \cite[Theorem 1.1]{min},
Min  constructs  a graph of groups where
\begin{enumerate}
	\item Each edge and vertex group is a closed hyperbolic surface 
	group.
	\item The inclusion map of an edge group
	into a vertex group takes the edge group injectively onto a subgroup
	of finite index in the vertex group.
	\item The resulting graph of groups is hyperbolic.
\end{enumerate}

By choosing the maps in the above construction carefully,
Min furnishes examples \cite[Section 5]{min} that are
not abstractly commensurable to a surface-by-free group.  However, since the graphs of hyperbolic groups
in this construction satisfy   the qi-embedded condition, they give rise to metric graph bundles. 

Min's examples from \cite{min} were generalized 
to the context of free groups in \cite{ghosh-mj}, where all the edge and vertex groups are finitely generated free.

What Theorem~\ref{intro-hypcommens} shows is that, as far as $H$ is concerned, these
two sets of examples, i.e.\ surface groups and free groups, are essentially all that can occur. However, a general understanding of what $G$ can be is missing.
A well-known open question asks if one can have an exact sequence of hyperbolic groups $1 \to H \to G \to Q \to 1$, where $H$ is a free group or a surface group,
and $Q$ is one-ended hyperbolic. 
The following question generalizes this to the context of commensurations. We refer to  \cref{sec-ca} for  definitions of the Cayley--Abels graph and Schlichting completion  of the pair $(G,H)$.

\begin{qn}\label{qn-one-ended}
Let $H$ be a non-elementary hyperbolic commensurated subgroup of infinite
index in a hyperbolic group $G$, and let $\GG$ be a Cayley--Abels graph of $(G,H)$. Can 
$\GG$ be one-ended?
\end{qn}
We note that for $G$, $H$, $\GG$ as in \mbox{
\cref{qn-one-ended}}\hskip0pt
, \mbox{
\cref{cor-qhyp} }\hskip0pt
ensures that the Cayley--Abels graph $\GG$ is hyperbolic.
 We make some brief observations about the structure of $G$,  $H$ and $\GG$ as above (see Section~\ref{sec-prop-restrns}):
\begin{prop}\label{prop:conclusions}
	Let $G$, $H$, $\GG$ be as in Question \ref{qn-one-ended}.
	\begin{enumerate}
		\item If $\GG$ is quasi-isometric to a rank-one symmetric space, then $H$ contains a finite index subgroup that is normal in $G$.
		\item If $\partial\GG$ is homeomorphic to $S^n$ for $0\leq n\leq 3$, then $H$ contains a finite index subgroup that is normal in $G$.
		\item If $|\partial \GG|>2$, the action of $G$ on $\GG$ does not fix a point of $\partial \GG$. In particular, the Schlichting completion of $(G,H)$ is non-amenable.
	\end{enumerate}
\end{prop}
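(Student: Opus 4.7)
The plan is to treat the three items separately, in each case exploiting that by construction $G$ acts cocompactly on the locally finite graph $\GG$ with vertex stabilizers commensurable with $H$, and that $\GG$ is itself hyperbolic by \cref{cor-qhyp}.

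For~(3), I would first record the standard fact that a group acting cocompactly on an unbounded Gromov hyperbolic space with more than two boundary points cannot fix any single point of the boundary: a fixed $\xi\in\partial\GG$ would force every $G$-orbit to lie in a bounded tubular neighborhood of one horosphere at $\xi$, contradicting cocompactness. For the non-amenability of the Schlichting completion $L$, I would argue by contradiction using the classical trichotomy (Adams, Woess, Caprace--Cornulier--Monod--Tessera) that an amenable locally compact group acting by isometries on a hyperbolic space either has a bounded orbit, fixes a boundary point, or fixes a pair of boundary points. The cocompact action of $L$ on the unbounded $\GG$ rules out bounded orbits; a single fixed point contradicts the first half of~(3) since $G$ is dense in $L$; and a fixed pair of points would force $L$, and hence $G$, to act cocompactly on a quasi-axis between them, forcing $|\partial\GG|=2$.

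For~(1), suppose $\GG$ is quasi-isometric to a rank-one symmetric space $X$. By the quasi-isometric rigidity theorems of Tukia, Chow, and Pansu (together with Gabai/Casson--Jungreis in the case $X=\Hyp^2$), every self-quasi-isometry of $\GG$ is at uniformly bounded distance from an isometry of $X$, yielding a homomorphism $\phi:G\to\Isom(X)$ with image a uniform lattice and a uniform constant $C$ such that $d_X(\phi(g)x,gx)\leq C$ for all $g\in G$ and $x\in X$. Let $K=\ker\phi$. On the one hand, each $k\in K$ has uniformly bounded displacement on $\GG$, so by local finiteness every $K$-orbit in $V\GG$ is finite and $K\cap G_v$ has finite index in $K$ for every vertex $v$. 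On the other hand, since $G_v$ fixes $v$, its image $\phi(G_v)$ lies in a maximal compact subgroup of $\Isom(X)$; being also a subgroup of the discrete group $\phi(G)$, it is finite, so $K\cap G_v$ has finite index in $G_v$ as well. Thus $K$ is normal in $G$ and commensurable with $H$. Since $G$ is finitely generated, $K\cap H$ has only finitely many $G$-conjugates (all lying among the finitely many finite-index subgroups of $K$ of fixed index), whose intersection is a finite-index subgroup of $H$ that is normal in $G$.

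For~(2), I would split into cases according to $n$. The case $n=0$ is direct: $\GG$ is two-ended and quasi-isometric to $\R$, so the $L$-action on the two ends gives a continuous surjection $L\to\Z$ or $L\to D_\infty$ with compact open kernel, and the argument of~(1) concludes. For $n=1$, the Casson--Jungreis/Gabai convergence-group theorem applied to $L\actson\partial\GG$ forces $\GG$ to be quasi-isometric to $\Hyp^2$, reducing to~(1). The main obstacle of the plan is the cases $n=2,3$: here I would invoke the available tdlc analogues of Cannon-type rigidity for low-dimensional sphere boundaries (combining the structure theory of hyperbolic tdlc groups \`a la Caprace--Cornulier--Monod--Tessera with the known cases of Cannon's conjecture and its generalizations in these dimensions) to produce a normal commensurable kernel in $G$, and then conclude as in~(1).
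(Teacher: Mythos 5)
Your proof of part (3) contains the decisive error. You claim that a fixed point $\xi\in\partial\GG$ would force $G$-orbits into a bounded neighborhood of a horosphere, contradicting cocompactness. This is false here because the action of $G$ on $\GG$ is \emph{not} proper: vertex stabilizers are the conjugates $gHg^{-1}$, which are infinite. Fixing a boundary point only constrains orbits to horospheres when the Busemann quasi-character is bounded on the group; in general a group can act cocompactly on a hyperbolic space, fix a boundary point, and have $|\partial\GG|>2$ --- these are exactly the \emph{focal} groups of \cite{capracecornuliermonodtessera2015amenable} (e.g.\ $\Q_p\rtimes\Z$ acting vertex-transitively on the $(p+1)$-regular tree fixing an end, or $\R^n\rtimes\R$ acting simply transitively on $\Hyp^{n+1}$). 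Ruling out this focal case is precisely the content of (3), and it cannot be done by soft dynamics: the paper assumes a fixed point, applies the structure theorem of \cite{capracecornuliermonodtessera2015amenable} to write $G\big/\!\big/H=L\rtimes\langle\rho(g)\rangle$ with a compacting automorphism and a vacuum compact open subgroup, and deduces that $H$ would be isomorphic to a proper finite-index subgroup of a conjugate of itself; this is then contradicted using \cref{intro-hypcommens} (so the statement genuinely depends on the main theorem of the paper) together with \cref{lem: comm of quasiconvex} and the nonvanishing of the rational Euler characteristic of a virtually free product of surface and free groups. Your subsequent trichotomy argument for non-amenability also collapses, since it cites the (unproved) first half to exclude the fixed-point case.

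Parts (1) and (2) also have gaps. In (1), after straightening the cobounded quasi-action to a homomorphism $\phi:G\to\Isom(X)$ you assert the image is a uniform lattice; but since $G$ does not act properly on $\GG$ there is no reason for $\phi(G)$ to be discrete, and discreteness is exactly what you use to conclude that $\phi(G_v)$ is finite and hence that $\ker\phi$ is commensurable with $H$. The paper obtains the needed discreteness from total disconnectedness: it applies quasi-isometric rigidity for \emph{locally compact} groups \cite[Theorem 19.25]{cornulier2018quasiisometric} to the Schlichting completion $G\big/\!\big/H$, concluding it is compact-by-Lie; a tdlc compact-by-Lie group is compact-by-discrete, so it has a compact open normal subgroup whose preimage in $G$ is a normal subgroup commensurable with $H$. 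In (2), your treatment of $n=2,3$ invokes ``known cases of Cannon's conjecture'' and tdlc analogues; Cannon's conjecture is open, and no $S^3$ analogue exists, so these inputs are unavailable. The paper needs far less: the kernel of the action of $G\big/\!\big/H$ on $\partial\GG\cong S^n$ is compact, and by the Hilbert--Smith conjecture in dimensions at most $3$ \cite{montgomeryzippin1955topological,pardon2013hilbertsmith} the quotient acting faithfully on $S^n$ is a Lie group; the same compact-by-Lie $\Rightarrow$ compact-by-discrete mechanism then finishes, with no identification of $\GG$ with a symmetric space at all. Your $n=0$ case and your reduction of $n=1$ to a convergence-group theorem for non-discrete groups would likewise be cleaner (and actually complete) if routed through this compact-open-kernel argument.
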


\subsection{A historical aside}
In \cite{bowditch-stacks}, Bowditch, using an argument he attributes to Kleiner, shows that if $P: X \to \R$ is a metric graph bundle, then the boundary 
 $\partial F_0$ of the fiber $F_0$ over $\{0\}$  embeds in a product of dendrites, and in particular is at most 2-dimensional. 

 Next suppose that $F_0$ admits a  geometric $H-$action, so that $\partial H =
 \partial F_v$.
 It is not hard to show using work of Bestvina-Mess \cite{bes-mess} that, even if 
 $\partial H$ is 2-dimensional, it cannot embed in a 2-dimensional contractible space. Thus, in the setup of a   metric graph bundle $P: X \to \R$
 one is reduced to the case of considering hyperbolic groups $H$ with
 one-dimensional boundary.
Work of \cite{kapovich-kleiner} implies that if $H$ does not split over $\bbZ$,
it must be either a Sierpinski carpet or a Menger curve. Theorem~\ref{intro-main}
rules these cases out.

We further point out that in the setup of Theorem~\ref{intro-main}, we only have
one dendrite and not two as in \cite{bowditch-stacks}. Thus, the embedding result
above in a product of dendrites cannot be used.

\section{Preliminaries on Metric Graph Bundles and JSJ}\label{sec-mbdl-dendrite}

\subsection{Metric Graph Bundles}\label{sec-mbdl} For the purposes of this paper, a \emph{metric graph} will refer to a connected graph equipped with the natural metric assigning length one  to each edge. This is to maintain consistency with the terminology borrowed from \cite{mahan-sardar}. We say that an inclusion 
$i:(Y,d_Y) \to (X,d_X)$ of metric spaces is \emph{metrically proper as measured by a proper function $f: \natls \to \natls$} if for all $x, y \in Y$, $d_X(x,y) \leq n$ implies $d_Y(x,y) \leq f(n)$. 
For a metric graph $X$, the vertex set will be denoted as $\VV(X)$.
\begin{defn}\label{defn-mgbdl}\cite[Definition 1.2]{mahan-sardar}
	Suppose $X$ and $B$ are metric graphs.
	We say that a surjective simplicial 
	map $p:X\to B$ is a {\em metric graph bundle} if there exists a function $f:\bbN\to \bbN$  such that the following hold.
	\begin{enumerate}
		\item For all $b\in \VV(B)$, $F_b:=p^{-1}(b)$ is a connected subgraph of $X$. Moreover, the inclusion maps
		$F_b\rightarrow X$, $b\in \VV(B)$ are uniformly (independent of $b$) metrically proper as measured by $f$.
		\item For all adjacent vertices $b_1,b_2\in \VV(B)$, any $x_1\in \VV(F_{b_1})$ is connected 
		by an edge to some $x_2\in \VV(F_{b_2})$.
	\end{enumerate}
	We  say $X$ is a \emph{metric graph bundle over $B$} if there exists a map $p:X\to B$ as above.
For all $b\in \VV(B)$ we shall refer to $F_b$ as the {\em fiber} over $b$ and denote its (intrinsic) path metric by $d_b$.
\end{defn}

Letting $\dh$ denote Hausdorff distance, condition (2) of Definition \ref{defn-mgbdl} immediately gives:

\begin{lemma}\label{bundle lemma1}
	If $p:X\rightarrow B$ is a metric graph bundle, then for any points $v,w\in \VV(B)$ we have
	$\dh(F_v, F_w)<\infty$.
\end{lemma}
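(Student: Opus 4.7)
The statement is essentially an immediate corollary of condition (2) of \cref{defn-mgbdl}, combined with the triangle inequality for Hausdorff distance, so the plan is short and straightforward.

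First, I would reduce to the case of adjacent vertices in $B$. The key observation is that condition (2) in \cref{defn-mgbdl} is symmetric in $b_1, b_2$: swapping the roles of $b_1$ and $b_2$ shows that for any adjacent pair $b_1, b_2 \in \VV(B)$, every vertex of $F_{b_1}$ has a neighbor (in $X$) lying in $\VV(F_{b_2})$, and every vertex of $F_{b_2}$ has a neighbor in $\VV(F_{b_1})$. Since every point of the subgraph $F_{b_i}$ lies within distance $1/2$ of a vertex of $F_{b_i}$, this yields $\dh(F_{b_1}, F_{b_2}) \leq 2$ in $X$ (or $\leq 1$ if one restricts attention to vertices). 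In particular the Hausdorff distance is finite whenever $b_1$ and $b_2$ are adjacent.

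Next, I would pass to arbitrary $v, w \in \VV(B)$ by choosing a path $v = b_0, b_1, \dots, b_n = w$ in $B$ (which exists since $B$ is connected by assumption on metric graphs). Iterating the previous step and applying the triangle inequality for Hausdorff distance gives
\[
\dh(F_v, F_w) \;\leq\; \sum_{i=1}^{n} \dh(F_{b_{i-1}}, F_{b_i}) \;\leq\; 2n \;<\; \infty,
\]
which is the desired conclusion.

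There is no real obstacle here. The only minor point worth being careful about is distinguishing the graph $F_b$ (a subgraph carrying the induced edges) from its vertex set $\VV(F_b)$; this only affects the constant by a factor of $2$, not finiteness. Note also that the proof gives the quantitative bound $\dh(F_v, F_w) \leq 2\, d_B(v,w)$, which will be useful later when one needs to compare fibers over nearby basepoints.
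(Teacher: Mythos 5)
Your proof is correct and is essentially the paper's argument: the paper simply observes that the lemma follows immediately from condition (2) of \cref{defn-mgbdl}, which is exactly what you spell out (adjacency gives uniformly bounded Hausdorff distance in both directions since the condition quantifies over all ordered adjacent pairs, and connectedness of $B$ plus the triangle inequality finishes it). The explicit bound $\dh(F_v,F_w)\leq 2\,d_B(v,w)$ is a harmless bonus not needed for the statement.
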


\subsection{Barycentric maps and controlled hyperbolic fibers}

We will be interested in metric graph bundles where the fibers are hyperbolic.
We assume that the reader is familiar with hyperbolic spaces and their boundaries (cf. \cite{gromov-hypgps},\cite[Chapter III.H]{bridson-haefliger} and \cite{ghys1990espaces}). 
We recall that for a hyperbolic space, one can define a barycenter using the following lemma.
\begin{lemma}\label{lem-bary}
	Suppose $X$ is proper $\delta$-hyperbolic. Then:\\
	(1) {\em (Visibility \cite[Chapter III.H, Lemma 3.2]{bridson-haefliger}).}  Suppose $\xi_1, \xi_2\in \partial X$ are two distinct points. Then there is a geodesic line in $X$ joining them.\\
	(2) {\em (Barycenters of ideal triangles) \cite[Chapter III.H, Lemmas 1.17, 3.3]{bridson-haefliger}} There are constants $D=D(\delta), R=R(\delta)$ such that the following holds.
	For any three distinct points $\xi_1,\xi_2, \xi_3\in \partial X$ there is a point $x\in X$ which is contained in the $D$-neighborhood
	of each of the three sides of any ideal triangle with vertices $\xi_1,\xi_2, \xi_3$. Moreover, 
	if $x'$ is any other such point then $d(x,x')\leq R$.  
\end{lemma}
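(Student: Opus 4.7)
The plan is to prove both parts via standard $\delta$-hyperbolicity arguments combined with properness of $X$ (which supplies the compactness needed for limit constructions). Throughout, fix a basepoint $o\in X$.

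For part (1), I would pick sequences $x_n\to\xi_1$ and $y_n\to\xi_2$ in $X$ and, using that $X$ is proper and geodesic, choose geodesic segments $\gamma_n$ from $x_n$ to $y_n$. Since $\xi_1\ne\xi_2$, the Gromov product $(\xi_1\mid\xi_2)_o$ is finite, and $\delta$-thinness of triangles forces each $\gamma_n$ to pass within a uniformly bounded distance of $o$. After reparametrizing so that $\gamma_n(0)$ is a nearest point of $\gamma_n$ to $o$, the sequence lies in a compact family of parametrized paths (by Arzel\`a--Ascoli, using local compactness); any limit is a bi-infinite geodesic line whose two ends converge to $\xi_1$ and $\xi_2$.

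For part (2), I would first apply (1) to obtain geodesic lines $\gamma_{12},\gamma_{23},\gamma_{13}$ forming the sides of an ideal triangle. To produce a barycenter, I would approximate by finite triangles: take $x_n^i\to\xi_i$, form the geodesic triangles with these vertices, and take their incenters $y_n$. Finiteness of the three pairwise Gromov products keeps $y_n$ in a bounded region of $X$; properness then provides a convergent subsequence $y_n\to x$, and a short fellow-travelling argument comparing the approximating sides with the ideal sides shows that $x$ is within some $D=D(\delta)$ of each $\gamma_{ij}$. For uniqueness, if $x$ and $x'$ both lie in the $D$-neighborhood of all three sides, then in each of the three cuspidal ends of the ideal triangle the sides diverge exponentially, so $x,x'$ must both lie in a bounded central region; quantifying this via the thin-triangle inequality yields $d(x,x')\le R$ for some $R=R(\delta)$.

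The main technical point is the uniform slimness of ideal triangles, i.e.\ that incenters of the approximating finite triangles remain in a bounded region of $X$ as their vertices are pushed out to infinity. This is exactly where finiteness of the Gromov products $(\xi_i\mid\xi_j)_o$ is essential; once that bound is in hand, both the existence of the barycenter and its uniqueness up to $R(\delta)$ reduce to a careful application of the $\delta$-hyperbolicity of $X$ to the limiting configuration.
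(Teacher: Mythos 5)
Your sketch is correct: the paper itself gives no proof of this lemma, citing it directly from Bridson--Haefliger (III.H, Lemmas 3.2, 1.17, 3.3), and your argument --- Arzel\`a--Ascoli plus finiteness of $(\xi_1\mid\xi_2)_o$ for visibility, and approximating an ideal triangle by finite triangles whose centers stay in a bounded region (controlled by the pairwise Gromov products) together with divergence of the sides for existence and coarse uniqueness of the barycenter --- is exactly the standard argument from that source. So you have reproduced the intended proof; nothing further is needed beyond routine details (e.g.\ checking that the ends of the limiting geodesic actually converge to $\xi_1$ and $\xi_2$).
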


A point $x$ as in Lemma \ref{lem-bary} 
is referred to as a {\em barycenter} of the ideal triangle.
Thus we have a coarsely well-defined map  $\br_X:\partial^{(3)} X\to X$ from the set of distinct triples of boundary points to $X$ sending a triple to its barycenter. We call such a map a {\em barycenter map.}
For $L\ge 0$, we say that the barycenter map is  {\em $L$-coarsely surjective} if the $L$-neighborhood of the image of any barycenter
map is all of $X$. Note that while a  barycenter map is only coarsely well-defined, $L$-coarse surjectivity is well-defined.

\begin{defn}\label{def:controlled fibers}
    The metric graph bundle $p:X\to B$   has \emph{controlled hyperbolic fibers} if there exist $\delta,L\ge 0$ such that for all $b\in \VV(B)$:
\begin{enumerate}
    \item $(F_b,d_b)$ is $\delta$-hyperbolic.
    \item The barycenter map $\br_{F_b}:\partial^{(3)}F_b\to F_b$ is $L$-coarsely surjective. 
\end{enumerate}
\end{defn}

\begin{defn}\label{def-qisxn}
	Suppose $p:X\to B$ is a metric graph bundle.
	Given $k\geq 1$ and a connected subgraph $\AAA\subset B$, a {\em $k$-quasi-isometric section} over $\AAA$ is
	a map $s:\AAA\to X$ such that $s$ is a $k$-quasi-isometric embedding and $p\circ s$ is the identity
	map on $\AAA$.\\
	A {\em quasi-isometric section} is a {\em $k$-quasi-isometric section} for some $k\geq 1$.
\end{defn}

For the group-theoretic version of Definition~\ref{def-qisxn} and Lemma~\ref{lem-qiexists}, see \cite{mosher-hypextns}. 
\begin{lemma} \cite[Proposition 2.10]{mahan-sardar}\label{lem-qiexists}
	If a metric graph bundle $p:X\to B$ has controlled hyperbolic fibers, then there exist $k\geq 1$ such that for each vertex $v$ of $X$, there  {is } a $k$-quasi-isometric section over $B$ containing $v$ in its image.
	In particular, if, in addition, $X$ is hyperbolic, then $B$ is hyperbolic.
\end{lemma}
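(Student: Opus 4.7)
The construction of the section uses the barycenter maps provided by the controlled hyperbolic fibers hypothesis. Given $v\in F_{b_0}$, by $L$-coarse surjectivity of $\br_{F_{b_0}}$ choose $\xi_1,\xi_2,\xi_3\in\partial F_{b_0}$ so that $d_{b_0}(v,\br_{F_{b_0}}(\xi_1,\xi_2,\xi_3))\le L$, and fix geodesic rays $\gamma_i\subset F_{b_0}$ from $v$ to $\xi_i$. For each $b\in\VV(B)$, pick a path $b_0=c_0,c_1,\dots,c_n=b$ in $B$ and, using the bundle condition (2) of \cref{defn-mgbdl}, iteratively lift each vertex of $\gamma_i$ step by step across adjacent fibers to obtain a path $\tilde\gamma_i^b$ in $F_b$. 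The metric properness of the fiber inclusions, measured uniformly by $f$, together with \cref{bundle lemma1}, ensures that $\tilde\gamma_i^b$ is a uniform quasi-geodesic ray in the $\delta$-hyperbolic fiber $F_b$; let $\tilde\xi_i^b\in\partial F_b$ denote its endpoint. Finally, define $s(b)$ to be a barycenter of $\tilde\xi_1^b,\tilde\xi_2^b,\tilde\xi_3^b$ in $F_b$, which exists and is coarsely well-defined by \cref{lem-bary}.

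\textbf{Verification.} The section must satisfy three properties, all with constants depending only on $\delta$, $L$ and $f$. First, $s(b_0)$ is within a bounded distance of $v$ by choice of the $\xi_i$. Second, $s$ is coarsely Lipschitz: if $b,b'\in \VV(B)$ are adjacent, the Hausdorff-distance bound from bundle condition (2) lets us choose nearly identical lifts of each $\gamma_i$ when passing from $F_b$ to $F_{b'}$, and uniform stability of quasi-geodesics (Morse) together with the coarse continuity of barycenters gives $d_X(s(b),s(b'))$ uniformly bounded. Third, $s$ is a quasi-isometric embedding: since $p$ is $1$-Lipschitz and $p\circ s=\id$, one direction is automatic; for the other, if $d_X(s(b),s(b'))$ were small while $d_B(b,b')$ were large, projecting the three quasi-geodesic rays from $s(b)$ and $s(b')$ into a common fiber via the properness function $f$ forces the endpoint triples in the corresponding boundaries to coincide up to bounded error, contradicting divergence of the rays. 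This yields the uniform constant $k$.

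\textbf{Hyperbolicity of $B$.} Assume $X$ is $\delta_X$-hyperbolic. Given a geodesic triangle in $B$ with vertices $b_1,b_2,b_3$, pick any $v\in F_{b_1}$ and let $s:B\to X$ be a $k$-quasi-isometric section through $v$. Then $s$ sends each side to a $k$-quasi-geodesic in $X$, and the resulting quasi-geodesic triangle in $X$ is $\delta'$-slim by the Morse lemma applied in a $\delta_X$-hyperbolic space. Since $p$ is $1$-Lipschitz and $p\circ s=\id_B$, slimness transfers back: for any point $x$ on a side of the triangle in $B$, $s(x)$ lies within $\delta'$ of $s$ of another side, and projecting gives a point within $\delta'$ of $x$ on that side in $B$. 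Hence $B$ is hyperbolic.

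\textbf{Main obstacle.} The technical heart is the lower bound in the quasi-isometric embedding estimate: one must rule out accidental collisions of barycenters in distant fibers. This is where the controlled hyperbolic fibers hypothesis is genuinely used, since it is not enough for each fiber to be hyperbolic---without uniform $L$-coarse surjectivity of barycenters, $s(b)$ could fail to record enough of the geometry of $F_b$ to be separated from $s(b')$ when $b,b'$ are far apart in $B$. Handling this uniformly in $v$, so that the same $k$ works for every basepoint, is the most delicate bookkeeping in the argument.
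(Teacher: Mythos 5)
You should first note what the paper itself does here: it does not prove the existence statement at all, but quotes it from \cite[Proposition 2.10]{mahan-sardar}; the only argument supplied in the paper is for the final clause, namely that $s(B)$ is a quasi-isometrically embedded copy of $B$ inside the hyperbolic space $X$, and such a subspace is hyperbolic. Your last paragraph proves that clause correctly (a slightly more laborious version of the same observation, pulling slimness of triangles back through $s$ using that $p$ is $1$-Lipschitz). So the substance of your proposal is an attempted reproof of the quoted Proposition 2.10, and there the sketch has genuine gaps.

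First, your ``main obstacle'' is a non-issue: for any section one has $p\circ s=\mathrm{id}_B$ with $p$ simplicial, hence $1$-Lipschitz, so $d_B(b,b')\le d_X(s(b),s(b'))$ automatically. There are no ``accidental collisions of barycenters in distant fibers'' to rule out, and your third verification step (ruling out ``$d_X(s(b),s(b'))$ small while $d_B(b,b')$ large'') addresses a situation that cannot occur. The only thing that needs proof is the opposite bound: a coarse Lipschitz estimate $d_X(s(b),s(b'))\le k\,d_B(b,b')+k$ with $k$ independent of the basepoint $v$ and of $d_B(b_0,b)$, and that is exactly where your argument is thin. The lifted rays $\tilde\gamma_i^b$ are not ``uniform quasi-geodesic rays'': each one-step fiber identification is a uniform quasi-isometry (this already requires the properness argument behind \cite[Proposition 1.7]{mahan-sardar}), but composing $n$ of them degrades the constants with $n=d_B(b_0,b)$, so uniform Morse stability is not available. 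More seriously, you transport the triple along an arbitrarily chosen path from $b_0$ to each $b$; for adjacent vertices $b,b'$ whose chosen paths differ substantially (unavoidable once $B$ is not a tree, and already possible for different geodesics), the two transported triples differ by the holonomy of a long loop in $B$, and the assertion that condition (2) of \cref{defn-mgbdl} ``lets us choose nearly identical lifts when passing from $F_b$ to $F_{b'}$'' is precisely the point at issue: condition (2) only compares adjacent fibers, and the naive estimate for the discrepancy between the two barycenters grows with the lengths of the chosen paths, so it does not produce a single constant $k$ valid for all adjacent pairs and all basepoints $v$. Establishing this uniform, path-independent control of the barycenter section is the actual content of \cite[Proposition 2.10]{mahan-sardar}; your sketch assumes it rather than proves it.
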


The last statement follows from the fact that a quasi-isometrically embedded subset of a hyperbolic space is hyperbolic.

\subsection{The flaring condition}
\begin{defn}\label{defn-flare}\cite[Definition 1.12]{mahan-sardar}
	Suppose $p:X\rightarrow B$ is a metric  graph bundle.
	We say that it satisfies a {\em flaring condition} if for all   $k \geq 1$, there exist
	$\lambda_k>1$ and  $n_k,M_k\in \mathbb N$ such that
	the following holds:\\
	Let $\gamma:[-n_k,n_k]\rightarrow B$ be a geodesic and let
	$\tilde{\gamma_1}$ and $\tilde{\gamma_2}$ be two
	$k$-quasi-isometric sections of $\gamma$ in $X$.
	If $d_{\gamma(0)}(\tilde{\gamma_1}(0),\tilde{\gamma_2}(0))\geq M_k$,
	then we have
	\[\mbox{
		{\small $\lambda_k.d_{\gamma(0)}(\tilde{\gamma_1}(0),\tilde{\gamma_2}(0))\leq \mbox{max}\{d_{\gamma(n_k)}(\tilde{\gamma_1}(n_k),\tilde{\gamma_2}(n_k)),d_{\gamma(-n_k)}(\tilde{\gamma_1}(-n_k),\tilde{\gamma_2}(-n_k))\}$}}.
	\]
\end{defn}

Note that in Definition~\ref{defn-flare}, $\tilde{\gamma_i}$ is a $k-$quasi-isometric section in the sense of Definition~\ref{def-qisxn}. Thus 
$\tilde{\gamma_i}$ is really only defined on the vertex set (integers) of the domain 
$[-n_k,n_k]$ of $\gamma$.

\begin{theorem}\label{thm-flare-nec}\cite[Proposition 5.8 and Theorem 4.3]{mahan-sardar}
 Let $p: X \rightarrow B$ be a metric graph bundle with controlled hyperbolic fibers.
Then $X$ is hyperbolic if and only if the metric graph bundle satisfies a flaring condition and $B$ is hyperbolic.
\end{theorem}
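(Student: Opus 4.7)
The plan is to prove the two directions separately. The easier direction is necessity: if $X$ is hyperbolic then $B$ is hyperbolic and flaring holds. Hyperbolicity of $B$ is immediate from \cref{lem-qiexists}, since any quasi-isometric section $s:B\to X$ realizes $B$ as a quasi-isometrically embedded subspace of the hyperbolic space $X$, and hyperbolicity is inherited by quasi-isometrically embedded subspaces.

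For flaring, assume $X$ is $\delta$-hyperbolic and fix $k\geq 1$. Let $\gamma$ be a geodesic in $B$ and $\tilde\gamma_1,\tilde\gamma_2$ two $k$-quasi-isometric sections of $\gamma$. These are $(k',k')$-quasi-geodesics in $X$ for some $k'=k'(k)$. The proper-inclusion data $f$ gives $d_X(x,y)\geq f^{-1}(d_b(x,y))$ for $x,y\in F_b$, so a lower bound $d_{\gamma(0)}(\tilde\gamma_1(0),\tilde\gamma_2(0))\geq M_k$ translates into a lower bound on the $X$-distance of the two quasi-geodesics at time $0$. Invoking exponential divergence of quasi-geodesics in hyperbolic spaces, two $k'$-quasi-geodesics that are sufficiently far apart at one point must diverge at an exponential rate in at least one direction. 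Combined with the trivial inequality $d_X\leq d_{\gamma(\pm n_k)}$ on a fiber, one extracts flaring constants $\lambda_k,n_k,M_k$ provided $M_k$ is chosen large enough.

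The sufficiency direction is the substantial one. Assume $B$ is hyperbolic and flaring holds. Following the Mahan--Sardar/Mitra strategy, I would introduce a preferred class of paths ("ladders") joining points of $X$: given $x,y\in X$, take a $B$-geodesic $\gamma$ from $p(x)$ to $p(y)$, lift $x$ and $y$ along $\gamma$ by quasi-isometric sections provided by \cref{lem-qiexists}, and in each fiber $F_{\gamma(t)}$ connect the two lifts by a fiber geodesic. Write $\LL(x,y)$ for the resulting ladder. The proof then has two technical pillars: (i)~ladders are uniformly quasiconvex in $X$, and (ii)~geodesic triangles in $X$ are slim. For (i), one uses flaring to show that the fiber-width of a ladder decays exponentially away from its "middle" fiber, so excursions off the ladder are penalized. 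For (ii), one projects a geodesic triangle in $X$ onto a geodesic triangle in $B$, builds the three ladders on its sides, and runs a Mitra-style combing argument that combines the $\delta_0$-slimness of the triangle in $B$ with the fiberwise quasiconvexity of ladders.

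The main obstacle is pillar (i): deriving uniform, sub-exponential control on ladder widths purely from the flaring data, without assuming hyperbolicity of $X$ in advance. This forces a bootstrapping argument that simultaneously tracks how the flaring constants $(\lambda_k,n_k,M_k)$ behave as $k$ is adjusted (since both quasi-isometric sections and fiber geodesics must feed into the ladder), and that rules out long "plateaus" along which two sections could stay close in the fiber direction while separating in $X$. Once such uniform width control is in hand, the slim-triangle conclusion in $X$ is a standard combing argument in the spirit of the Bestvina--Feighn combination theorem, yielding hyperbolicity of $X$.
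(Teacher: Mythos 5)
You should first note that the paper does not prove \cref{thm-flare-nec} at all: it is imported wholesale from \cite[Proposition 5.8 and Theorem 4.3]{mahan-sardar}, so there is no internal argument to compare against. The only piece the paper supplies itself is the observation after \cref{lem-qiexists} that hyperbolicity of $X$ forces hyperbolicity of $B$ via a quasi-isometric section, and your treatment of that point agrees with it. Your outline also correctly identifies the architecture of the cited proofs (qi-sections; ladders/hallways; flaring versus a Bestvina--Feighn/Mitra-style combing for sufficiency), but as a proof it has a genuine gap in each direction.

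For necessity, the quantitative step as written fails. The claim that two $k'$-quasi-geodesics which are far apart at one point ``must diverge at an exponential rate'' is not a correct statement about $X$-distances: the two sections are coarsely Lipschitz over the base, so their $X$-distance at $\pm n_k$ is at most $d_X(\tilde\gamma_1(0),\tilde\gamma_2(0))+2(kn_k+k)$, i.e.\ it grows at most linearly. The exponential behaviour in the flaring condition lives in the \emph{fiber} metrics, which are only properly (typically exponentially distortedly) embedded in $X$. Your chain fiber-distance at $0$ $\Rightarrow$ $X$-distance at $0$ $\Rightarrow$ divergence in $X$ $\Rightarrow$ fiber-distance at the ends cannot produce the multiplicative inequality: properness gives $d_b\le f(d_X)$ with $f$ an arbitrary proper function, so the middle fiber-distance may be of size $f(D)$ while the end $X$-distances are at most $D+2(kn_k+k)$; once $f$ is superlinear no choice of $(\lambda_k,n_k,M_k)$ fixed in advance yields $\max$ end fiber-distance $\ge\lambda_k\cdot$ middle fiber-distance along this route. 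The actual argument of \cite[Proposition 5.8]{mahan-sardar} (after Mosher and Bestvina--Feighn) works directly with the hallway of fiber geodesics, combining thinness/isoperimetric estimates in $X$ with uniform properness of the fibers. For sufficiency you give only a plan: the uniform quasiconvexity of ladders and the slim-triangle combing that you defer are exactly the content of \cite[Theorem 4.3]{mahan-sardar}, so the substantial direction remains unproven in your proposal. Since this is a quoted background theorem, citing \cite{mahan-sardar}, as the paper does, is the appropriate course; if you do want to sketch it, the necessity sketch in particular needs to be rephrased in terms of fiber distances rather than $X$-distances.
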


\begin{lemma}\label{lem:general B to ray}
    Suppose that $p:X\to B$ is hyperbolic with controlled hyperbolic fibers, and $R\subseteq B$ is an infinite geodesic ray. Let $X_R = p^{-1}(R)$. Then the metric graph bundle $p:X_R \to R$ is hyperbolic with controlled hyperbolic fibers.
\end{lemma}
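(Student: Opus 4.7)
The plan is to verify the three required properties of $p:X_R\to R$—that it is a metric graph bundle, that it has controlled hyperbolic fibers, and that $X_R$ is hyperbolic—each by direct inheritance from the original bundle $p:X\to B$.

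First I would check that $p:X_R\to R$ is a metric graph bundle with controlled hyperbolic fibers, in the sense of \cref{defn-mgbdl} and \cref{def:controlled fibers}. For each $b\in\VV(R)$ the fiber is the same connected subgraph $F_b\subseteq X$ equipped with the same intrinsic path metric $d_b$, so controlled hyperbolicity of the fibers (which is an entirely intrinsic property of each $F_b$) is immediate. Condition (2) of \cref{defn-mgbdl} is inherited since an edge of $R$ is an edge of $B$. For the uniform metric properness in condition (1), I would use the trivial inequality $d_X\le d_{X_R}$: if $x,y\in F_b$ satisfy $d_{X_R}(x,y)\le n$ then $d_X(x,y)\le n$, hence $d_b(x,y)\le f(n)$ with the same control function $f$ that witnesses metric properness in the ambient bundle.

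To establish hyperbolicity of $X_R$, I would invoke \cref{thm-flare-nec} for the bundle $p:X_R\to R$. The base $R$ is a ray, hence a tree, hence hyperbolic, so it suffices to verify the flaring condition for the sub-bundle. My plan is to show that the flaring parameters $\lambda_k,n_k,M_k$ provided by \cref{thm-flare-nec} applied to $p:X\to B$ continue to work verbatim for $p:X_R\to R$. The key observation is that any $k$-quasi-isometric section $s:\gamma\to X_R$ over a geodesic segment $\gamma\subseteq R$ is also a $k$-quasi-isometric section over $\gamma$, viewed as a geodesic in $B$, into the ambient space $X$: the upper bound is inherited from $d_X\le d_{X_R}$, while the lower bound
\[
d_X(s(a),s(b))\ \ge\ d_B(p(s(a)),p(s(b)))\ =\ |a-b|
\]
follows from the $1$-Lipschitz property of the simplicial map $p$ together with $p\circ s=\mathrm{id}_\gamma$. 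Since the fibers and their intrinsic metrics coincide in $X$ and in $X_R$, and the flaring inequality is a statement purely about intra-fiber distances between the endpoints of such sections, the flaring inequality for $p:X\to B$ directly produces the corresponding inequality for $p:X_R\to R$. If a geodesic $\gamma:[-n_k,n_k]\to R$ demanded by the definition fails to fit inside $R$ because of the endpoint of the ray, the flaring requirement at such a point is vacuous, which is harmless.

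The only potential obstacle is the transfer of flaring, which reduces to the observation above that a $k$-quasi-isometric section into $X_R$ is automatically a $k$-quasi-isometric section into $X$ with the same constant. Once this is in hand, \cref{thm-flare-nec} yields hyperbolicity of $X_R$ and completes the proof.
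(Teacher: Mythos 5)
Your proposal is correct and follows the same strategy as the paper: invoke \cref{thm-flare-nec} in both directions, noting that flaring passes to the sub-bundle because a geodesic in $R$ is a geodesic in $B$ and a $k$-qi section in $X_R$ is a $k$-qi section in $X$, while the intra-fiber distances appearing in the flaring inequality are unchanged. The paper's own proof is just a terser version of this, taking the transfer of flaring and the bundle/controlled-fiber checks as evident; your additional verifications (metric properness via $d_X\le d_{X_R}$, unchanged intrinsic fiber metrics) are valid and simply make explicit what the paper elides.
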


\begin{proof}
 By \cref{thm-flare-nec}, $p:X\to B$ satisfies the flaring condition. Hence, so does $p:X_R\to R$. Thus, by the same theorem, $p:X_R\to R$ (where $X_R$ is equipped with its intrinsic metric) is a hyperbolic metric graph bundle over a ray. 
\end{proof}

In view of \cref{lem:general B to ray}, we will mostly restrict our attention to hyperbolic bundles $p:X\to [0,\infty)$ where the ray $[0,\infty)$ is given its usual graph structure with vertices at integers.

\subsection{Cannon-Thurston maps
and dendrites}\label{sec-dendrite}
 \begin{defn}\label{def-ct}\cite{mitra-trees}
Let $(Y, d_Y),(X,d_X)$ be hyperbolic metric spaces and $i: Y \subset X$ be a proper embedding. Denote by $\widehat X = X \cup \partial X$ its boundary compactification. A \emph{Cannon-Thurston map} for the pair $(Y, X)$ is a map $\hat i :\hhat{Y}
\to \hhat X$ which is a continuous extension of $i$.
 \end{defn}

 \begin{theorem}\label{thm-ct}\cite[Theorem 5.3]{mahan-sardar}
 	Let $p: X \rightarrow [0,\infty)$ be a hyperbolic metric graph bundle with controlled hyperbolic fibers.
  Then the pair $(F_n, X)$ has a Cannon-Thurston map for each $n\in \bbN$.
 \end{theorem}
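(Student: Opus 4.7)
The strategy is Mitra's ladder construction, adapted to the metric graph bundle setting via \cref{lem-qiexists} and the flaring condition of \cref{thm-flare-nec}. By Mitra's criterion for the existence of Cannon--Thurston maps (see \cite{mitra-trees}), it suffices to verify: for every basepoint $y_0\in F_n$ and every $M>0$ there exists $N>0$ such that every $d_n$-geodesic $\lambda\subseteq F_n$ lying outside $B_{d_n}(y_0,N)$ has every $d_X$-geodesic between its endpoints lying outside $B_{d_X}(y_0,M)$.

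Given such a $\lambda$ with endpoints $u,v$, I would invoke \cref{lem-qiexists} to fix $k$-quasi-isometric sections $s_u,s_v:[0,\infty)\to X$ through $u$ and $v$ respectively, with $k$ depending only on the bundle data. For each $m\in\bbN$ let $\lambda_m\subseteq F_m$ be a $d_m$-geodesic joining $s_u(m)$ to $s_v(m)$; so $\lambda_n=\lambda$. The \emph{ladder} associated to $\lambda$ is $\LL_\lambda=\bigcup_m \lambda_m\subseteq X$.

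The technical heart of the proof is to construct a coarsely Lipschitz retraction $\pi_\lambda:X\to\LL_\lambda$ whose constants depend only on the bundle data. I would define $\pi_\lambda$ fiberwise as a nearest-point projection $F_m\to\lambda_m$; by the controlled hyperbolicity of fibers (\cref{def:controlled fibers}), $\lambda_m$ is uniformly quasi-convex in $F_m$ and this projection is well-defined up to bounded error. The key estimate is the uniform cross-fiber Lipschitz bound: for $x\in F_m$ adjacent in $X$ to $x'\in F_{m\pm 1}$, $d_X(\pi_\lambda(x),\pi_\lambda(x'))$ must be uniformly bounded. A failure here would produce a pair of $k'$-quasi-isometric sections passing through $\pi_\lambda(x)$ and $\pi_\lambda(x')$ whose fiber-distance profile contradicts \cref{thm-flare-nec}. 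Once $\pi_\lambda$ is in hand, standard $\delta$-hyperbolic geometry shows that $\LL_\lambda$ is uniformly quasi-convex in $X$, so every $d_X$-geodesic joining $u,v$ lies in a uniform Hausdorff neighborhood of $\LL_\lambda$.

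It remains to show $d_X(y_0,\LL_\lambda)\to\infty$ as $N=d_n(y_0,\lambda)\to\infty$. For each $w\in\lambda_m$, compare with the quasi-section $s_{y_0}(m)$ through $y_0$: flaring applied to the pairs $(s_{y_0},s_u)$ and $(s_{y_0},s_v)$ forces $d_m(s_{y_0}(m),\lambda_m)$ to be bounded below by an increasing function of $N$ minus a uniform flaring threshold. Properness of the fiber inclusions (\cref{defn-mgbdl}) converts this fiber distance into $d_X(s_{y_0}(m),w)\to\infty$; combined with $d_X(y_0,s_{y_0}(m))\le km+k$, this yields $d_X(y_0,\LL_\lambda)\to\infty$ uniformly in $m$ and $w$, completing the verification of Mitra's criterion. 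The main obstacle throughout is the uniform cross-fiber Lipschitz estimate for $\pi_\lambda$; once that is handled, the remaining arguments are routine bookkeeping with hyperbolic geometry and bundle properness.
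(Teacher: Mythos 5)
The paper does not prove this statement; it is cited directly from \cite[Theorem 5.3]{mahan-sardar}. Your outline recovers the standard architecture of that proof: Mitra's criterion, ladders built from $k$-quasi-isometric sections through the endpoints of a fiber geodesic, a coarsely Lipschitz retraction $\pi_\lambda$ onto the ladder, and flaring to push the ladder away from the basepoint. That is the correct skeleton.

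However, you misattribute the source of the cross-fiber Lipschitz bound. You claim a failure of $d_X(\pi_\lambda(x),\pi_\lambda(x'))\le C$ for adjacent $x\in F_m$, $x'\in F_{m\pm1}$ ``would produce a pair of $k'$-quasi-isometric sections \ldots whose fiber-distance profile contradicts \cref{thm-flare-nec}.'' This does not work: $\pi_\lambda(x)$ and $\pi_\lambda(x')$ sit in \emph{adjacent} fibers, so there is no single qi-section through both of them, and a single bounded jump across one edge of the base gives no flaring contradiction. The cross-fiber estimate is purely a hyperbolic-geometry fact about projections and has nothing to do with flaring. It is proved exactly as in Case 2 of the paper's own \cref{thm-coarselipretract}: the fiber identifications $\phi_{m+1}:F_m\to F_{m+1}$ are uniform quasi-isometries, nearest-point projection onto a uniformly quasiconvex subset of a $\delta$-hyperbolic space is coarsely Lipschitz (\cite[Ch.\ III.$\Gamma$, Prop.\ 3.11]{bridson-haefliger}), and one shows that $\phi_{m+1}$ coarsely commutes with the projections $\pi_m,\pi_{m+1}$ by comparing Gromov products before and after applying $\phi_{m+1}$. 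Flaring is used \emph{only} in the final step, to convert ``$\lambda$ far from $y_0$ in $F_n$'' into ``$\LL_\lambda$ far from $y_0$ in $X$''; that part of your outline is fine. Fixing the retraction step would make your plan a faithful reconstruction of the cited proof.
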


\begin{defn}\label{def-dend}
A dendrite $\DD$ is a compact Hausdorff space, so that for all $x\neq  y\in \DD $, there
is a unique unparametrized embedded arc $\sigma \subset \DD$ with end-points $x, y$.
\end{defn}

The following theorem due to Bowditch describes the Gromov boundary of $X$.
\begin{theorem}\cite[Theorem 1.2.5]{bowditch-stacks}\label{thm-bowditch}
 Let $p:X\to [0,\infty)$ be a hyperbolic metric graph bundle with controlled hyperbolic fibers. Then, the Gromov boundary $\partial X$ of $X$ is a dendrite.
\end{theorem}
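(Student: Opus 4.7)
The plan is to verify the standard topological characterization: $\partial X$ is a dendrite if and only if it is a compact, connected, locally connected, metrizable space containing no simple closed curve (equivalently, a uniquely arcwise connected continuum). Compactness and metrizability are immediate because $X$, being a locally finite graph that is $\delta$-hyperbolic, is a proper Gromov-hyperbolic space, so $\partial X$ is compact and metrizable in the standard boundary topology. Connectedness follows from the one-endedness of $X$: the base $[0,\infty)$ is one-ended, each fiber $F_b$ is a connected graph by \cref{defn-mgbdl}, and the flaring condition (\cref{thm-flare-nec}) rules out the splitting of large balls into multiple unbounded complementary components.

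The engine of the argument is a separating family of ``walls'' coming from the fibers. For each integer $n\ge 1$, the fiber $F_n$ separates $X$ into $X_n^- := p^{-1}([0,n])$ and $X_n^+:= p^{-1}([n,\infty))$, intersecting in $F_n$. By \cref{thm-ct}, the inclusion $F_n\hookrightarrow X$ extends continuously to a Cannon--Thurston map $\hat i_n:\partial F_n\to\partial X$; write $\Lambda_n$ for its image. The key step I would establish is that $\Lambda_n$ separates $\partial X$ into two pieces: for any geodesic ray in $X$ tending to a boundary point $\zeta\notin\Lambda_n$, flaring forces the ray eventually into exactly one of $X_n^\pm$ (since any sequence of quasi-geodesics oscillating across the wall would produce a pair of quasi-isometric sections on opposite sides of $F_n$ whose fiber-distance contradicts flaring). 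This yields a decomposition $\partial X\setminus\Lambda_n = U_n^-\sqcup U_n^+$ into disjoint open sets, and the family $\{\Lambda_n\}$ is nested with $U_n^+\supseteq U_{n+1}^+$ and $U_n^-\subseteq U_{n+1}^-$. Note that each $\Lambda_n$ is itself connected, as the continuous image of the connected space $\partial F_n$.

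Given this separator structure, both remaining dendrite properties follow. For local connectedness, a basis of connected neighborhoods of any $\zeta\in\partial X$ is constructed from complements of finitely many walls $\Lambda_n$, using that each $\Lambda_n$ is connected and that the $U_n^\pm$ shrink down to points of $\partial X$. For the absence of simple closed curves, suppose $C\subseteq\partial X$ is one. Choose $n$ such that $C$ meets both $U_n^-$ and $U_n^+$ (possible unless $C$ degenerates). Then each of the two arcs of $C$ joining $U_n^-$ to $U_n^+$ must cross $\Lambda_n$, so $C\cap\Lambda_n$ contains at least two points $p,q$. Restricting attention to a sub-arc of $C$ between $p$ and $q$ lying in (say) $U_n^-\cup\{p,q\}$ and refining the walls (working with $F_m$ for $m>n$ inside $X_n^-$ --- which we handle by reversing the roles via the symmetric bundle $p:X_n^-\to[0,n]$ after further localization), we iterate to obtain an infinite nested sequence of sub-arcs with strictly shrinking separator intersections; this contradicts the existence of the closed loop $C$.

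The main obstacle is the separator claim in the second paragraph, namely the open disjoint decomposition $\partial X\setminus\Lambda_n = U_n^-\sqcup U_n^+$. Establishing that a geodesic ray to a boundary point outside $\Lambda_n$ is eventually trapped on one side of $F_n$ requires a careful quantitative use of flaring, together with the existence of quasi-isometric sections (\cref{lem-qiexists}) to propagate estimates along the ray; this is precisely where the controlled-hyperbolic-fibers hypothesis is used in full. The remaining combinatorial arguments (nesting, local connectedness, the loop contradiction) are then formal consequences.
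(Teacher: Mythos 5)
Your central mechanism does not exist in this setting, and this is fatal to both the local-connectedness step and the no-simple-closed-curve step. By \cref{bundle lemma1}, any two fibers are at finite Hausdorff distance (indeed $\dh(F_0,F_n)\le n$), so $X_n^-=p^{-1}([0,n])$ is just a bounded thickening of the single fiber $F_0$, while $X_n^+=p^{-1}([n,\infty))$ is a bounded thickening of all of $X$. Consequently the limit sets $\Lambda_X(F_n)$ coincide for all $n$ (and, when the fibers are proper so that \cref{thm-ct} applies as you use it, each equals the Cannon--Thurston image $\hat i_n(\partial F_n)$): your ``walls'' $\Lambda_n$ are one and the same subset of $\partial X$, not a refining nested family. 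Moreover, a geodesic ray of $X$ that is eventually contained in $X_n^-$ converges to a point of $\Lambda_X(X_n^-)=\Lambda_X(F_0)=\Lambda_n$ itself, so your set $U_n^-$ is empty and the claimed decomposition $\partial X\setminus\Lambda_n=U_n^-\sqcup U_n^+$ degenerates: $\Lambda_n$ does not separate $\partial X$ in the asserted way, nothing nests, and nothing ``shrinks down to points.'' The proposed iteration inside $X_n^-$ via the bundle over $[0,n]$ has no content for the same reason: that sub-bundle is cobounded with one fiber and carries no asymptotic separation structure. Two further unjustified points: connectedness of $\partial X$ does not follow merely from one-endedness of $X$ (and you do not prove $X$-one-endedness either), and connectedness of $\Lambda_n$ uses connectedness of $\partial F_n$, i.e.\ one-endedness of the fiber, which is not a hypothesis (in this paper's application the fibers are Cayley graphs of groups that are typically infinitely-ended).

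For comparison: the paper offers no proof of this statement; it is quoted from Bowditch \cite{bowditch-stacks}, whose argument is genuinely different from yours. There, and in the related analysis carried out in this paper (see \cref{lem-flowhoroball,lem-lthoro1,lem-lthoro2,cor-bdy1pt}), the relevant geometry is \emph{vertical}: one works with quasi-isometric sections over $[0,\infty)$ through every point (\cref{lem-qiexists}), flowed images/ladders of subsets of a single fiber, and the flaring condition (\cref{thm-flare-nec}) to control how these sections coalesce; the tree-like structure of $\partial X$ emerges from this coalescence pattern, not from using fibers as separating walls --- precisely because, as above, the fibers are mutually cobounded and cannot separate anything at infinity. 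If you want a proof in the spirit of this paper rather than citing Bowditch, that is the direction to pursue.
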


\section{Polynomial Growth of Horospherical Subsets}
\subsection{Flowing quasiconvex subsets}\label{sec:flow}
Let $p:X\to[0,\infty)$ be a metric graph bundle with controlled hyperbolic fibers.  We pick $\delta$ sufficiently large such that all the $F_n$ are $\delta$-hyperbolic. 
Here, we assume that $[0,\infty)$ is given the standard simplicial structure with vertices at integers.
There exist (cf.\ \cite[Proposition 1.7]{mahan-sardar}) $K\geq 1, \ep \geq 0$ and quasi-isometries $\phi_n: F_{n-1} \to F_n$ such that 
\begin{enumerate}
\item $\phi_n$ is a $(K,\ep)$-quasi-isometry,
\item there is an edge in $X$ joining $x$ to $\phi_n(x)$ for all $x \in F_{n-1}$ and
all $n \in \natls$. 
\end{enumerate}

Let $\partial \phi_n: \partial F_{n-1} \to \partial F_n$ denote the 
homeomorphism induced by the quasi-isometry $\phi_n$. Let $\overline{\phi_n}$ be a coarse inverse of $\phi_n$, which we may assume, after increasing $K$ and $\ep$ if needed, is also a $(K,\ep)$-quasi-isometry. Let $\ZZ = \ZZ_0\subset  \partial F_{0}$ denote
a closed subset. Define $\ZZ_n$ inductively by $\ZZ_n = \partial \phi_n(\ZZ_{n-1})$.

If $C\subseteq X$ and $K\geq 1$, we define the \emph{$K$-thickening of $C$} to be the largest subgraph of $X$ contained in the metric $K$-neighborhood of  {$C$}. 

\begin{defn}\label{def-weakhull}
Let $(Y,d_Y)$ denote a $\delta$-hyperbolic metric space. For a closed subset $\UU \subset \partial Y$, the \emph{weak convex hull} $\whull(\UU)$ is the $4\delta$-thickening of the  union of all bi-infinite geodesics of the
form $(u,v)$, where $u\neq v \in \UU$.
\end{defn}

\begin{rmk}\label{rmk:hullqconvex}
		We observe that each $\whull(\UU)$ is a $4\delta$-quasiconvex connected subgraph of $Y$. This follows from the fact that geodesic quadrilaterals in
		{ $Y\cup \partial Y$} are $4\delta$-thin, and hence the union of all geodesics joining disjoint points of $\UU$ is $4\delta$-quasiconvex. 
\end{rmk}

Let $\CC_n =\whull(\ZZ_n)\subset F_n$ denote the weak convex hull of $\ZZ_n$ in the \emph{intrinsic}
hyperbolic metric on $F_n$. 
Since for each $n$, we have that $\partial \phi_{n}(\ZZ_{n-1})= \ZZ_n$ and $\CC_n$ is $4\delta$-quasiconvex in $F_n$,  there exists some $K'=K'(K,\epsilon,\delta)\geq 1$ such that $\dh(\phi_n (\CC_{n-1}),\CC_{n})\leq K'$ and $\dh(\overline{\phi_{n+1}} (\CC_{n+1}),\CC_{n})\leq K'$ for all $n$.
\begin{defn}\label{def-flowedimage}
The \emph{flowed image} $\FF\CC_0$ of $\CC_0$ in $X$ is defined to be the connected graph consisting of the subgraph 
$\bigcup_{n=0}^\infty \CC_n$, and all edges of the form $(u,v)$ where $u\in \CC_n$,  $v\in \CC_{n+1}$ and  either $d(\phi_n(u),v)\leq K'$ or $d(\overline{\phi_{n+1}}(v),u)\leq K'$.
\end{defn}

\begin{rmk}
		Although the flowed image $\FF\CC_0$ is not necessarily a subgraph of $X$ since the added edges need not lie in $X$,  its vertex set $\VV(\FF\CC_0)$ is a subset of $\VV(X)$.

	The same definition works  for a more general base $B$, which is not necessarily a ray. However, in view of the assumptions of \cref{lem:general B to ray}, we restrict our attention to bundles over rays for simplicity.
\end{rmk}

Let 
$ \pi_n : F_n \to \CC_n$ denote a nearest point projection of $F_n$ onto the (uniformly) quasiconvex set $\CC_n$ in the \emph{intrinsic hyperbolic} metric $d_n$ on $F_n$. We define $\Pi : X \to \FF\CC_0$ as follows. We define  $\Pi$ to be  $\bigcup_n\pi_n$ on $\bigcup_{n=0}^\infty F_n$, and then extend $\Pi$ on the remaining  edges  by sending the interior of an edge to the image of 
any one of its end-points. The following theorem, stating that $\Pi$ is a coarse Lipschitz retract is now a straightforward generalization of
\cite[Theorem 3.8]{mitra-trees} or \cite[Theorem 3.7]{mitra-ct}:

\begin{theorem}\label{thm-coarselipretract}
Let $p:X\to [0,\infty), \Pi$ be as above. There exists $C\geq 1$, such that for all $x, y \in X$, 
$$d(\Pi(x),\Pi(y)) \leq Cd(x, y) + C.$$
In particular,  $\VV(\FF\CC_0)$ is quasiconvex in $X$.
\end{theorem}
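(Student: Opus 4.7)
The plan is to follow the classical template from \cite{mitra-trees,mitra-ct}, reducing the coarse Lipschitz bound on $\Pi$ to the case of a single edge of $X$, and then to derive quasiconvexity of $\VV(\FF\CC_0)$ from the standard projection argument in hyperbolic spaces. Concretely, I would aim to prove there is a constant $C_0$ such that $d(\Pi(u),\Pi(v))\leq C_0$ whenever $u,v\in \VV(X)$ are adjacent; iterating along a vertex geodesic in $X$ (and noting that the interior of an edge of $X$ is sent by definition to the image of one of its endpoints) then gives the theorem up to adjusting constants.

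There are two edge-level cases. If $u,v\in F_n$ are joined by an edge of $X$, then metric properness of $F_n \hookrightarrow X$ with control function $f$ forces $d_n(u,v) \leq f(1)$, and coarse $1$-Lipschitzness of nearest-point projection onto the $4\delta$-quasiconvex subset $\CC_n$ (\cref{rmk:hullqconvex}) bounds $d_n(\pi_n(u),\pi_n(v))$; since the fiber inclusion is $1$-Lipschitz this controls $d_X(\Pi(u),\Pi(v))$. If instead $u\in F_n$ and $v\in F_{n+1}$ are joined by an edge of $X$, then $v$ and $\phi_{n+1}(u)$ both lie in $F_{n+1}$ at $X$-distance at most $2$, so $d_{n+1}(v,\phi_{n+1}(u)) \leq f(2)$ and hence $d_{n+1}\bigl(\pi_{n+1}(v),\pi_{n+1}(\phi_{n+1}(u))\bigr)$ is bounded. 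The remaining step is to compare $\pi_{n+1}(\phi_{n+1}(u))$ with $\phi_{n+1}(\pi_n(u))$, for which I would invoke the standard hyperbolic-geometry lemma that a $(K,\ep)$-quasi-isometry between $\delta$-hyperbolic spaces coarsely commutes with nearest-point projection onto quasiconvex subsets whose images are Hausdorff-close. Combined with the $K'$-Hausdorff bound $\dh(\phi_{n+1}(\CC_n),\CC_{n+1})\leq K'$ and the construction of $\FF\CC_0$, this yields a vertex $v'\in \CC_{n+1}$ within controlled intrinsic distance of $\pi_{n+1}(v)$ that is also an $\FF\CC_0$-neighbour of $\pi_n(u)$, closing the second case.

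For the quasiconvexity conclusion, the standard retract argument applies. Since $\pi_n$ fixes $\CC_n$, the map $\Pi$ is a coarse retraction onto $\VV(\FF\CC_0)$; thus for $u,v\in \VV(\FF\CC_0)$ and an $X$-geodesic $\gamma$ between them, applying $\Pi$ vertex-by-vertex yields a $C_0$-coarse path from $u$ to $v$ — equivalently, a uniform quasi-geodesic in $X$ whose vertices lie in $\VV(\FF\CC_0)$. The Morse lemma in the hyperbolic space $X$ then forces $\gamma$ into a uniform neighbourhood of $\VV(\FF\CC_0)$.

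The main technical obstacle I anticipate is uniformity of the second-case bound in $n$: the comparison between $\pi_{n+1}(\phi_{n+1}(u))$ and $\phi_{n+1}(\pi_n(u))$ must be controlled by constants depending only on $K,\ep,\delta$ and $K'$, and not on $n$ or on the subset $\ZZ$. This uniformity is precisely what the controlled hyperbolic fibers hypothesis (together with the uniform choice of quasi-isometries $\phi_n$ in \cite[Proposition 1.7]{mahan-sardar} and the definition of $\FF\CC_0$) supplies. Once this uniformity is secured, the remainder of the argument reduces to coarse-geometric bookkeeping, consistent with the authors' description of the result as a straightforward generalization of \cite[Theorem 3.8]{mitra-trees} and \cite[Theorem 3.7]{mitra-ct}.
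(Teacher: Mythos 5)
Your proposal is correct and takes essentially the same route as the paper: reduction to adjacent vertices, the same two cases, with the key Case 2 comparison of $\pi_{n+1}(\phi_{n+1}(x))$ and $\phi_{n+1}(\pi_n(x))$ (which the paper carries out via uniform Gromov-product bounds, i.e.\ precisely the ``quasi-isometries coarsely commute with projections onto Hausdorff-close quasiconvex sets'' lemma you invoke, with constants depending only on $\delta, K, \ep$), and quasiconvexity deduced from the coarse retraction making $\FF\CC_0\to X$ a quasi-isometric embedding together with stability of quasi-geodesics. The only cosmetic point is that the projected chain need not itself be a quasi-geodesic; what matters, as in the paper, is that it bounds the intrinsic distance in $\FF\CC_0$ linearly, after which the Morse lemma applies to intrinsic geodesics.
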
 

\begin{proof}
	We sketch a proof for completeness. Let $x, y \in \VV(X)$. As in \cite[Theorem 3.8]{mitra-trees} or \cite[Theorem 3.7]{mitra-ct}, it suffices to show that there exists $C \geq 1$ such that $d(x,y) = 1$ implies 
$d(\Pi(x),\Pi(y)) \leq C$.\\

	\noindent {\bf Case 1: $x, y \in F_n$ for some $n$.}\\ In this case $\Pi(x)=\pi_n(x)$ and $\Pi(y)=\pi_n(y)$, where $\pi_n:X\to \CC_n$ is a nearest point projection onto a $4\delta$-quasiconvex subset. The statement now follows from \mbox{
\cite[{Chapter III.$\Gamma$, Proposition  3.11}]{bridson-haefliger}}\hskip0pt. \\

	\noindent {\bf Case 2: $x \in F_n, \,  y \in F_{n+1}$ for some $n$.}\\
	Since $\phi_{n+1}(x), y$  are at a uniformly bounded distance from each other, Case 1 allows us to assume that {$y=\phi_{n+1}(x)$ } without loss of generality. (This reduction is again exactly as in \cite[Theorem 3.8]{mitra-trees} or \cite[Theorem 3.7]{mitra-ct}.) Since $\CC_n$ is $4\delta$-quasiconvex in $F_n$, it follows that $[x,\pi_n(x)] \cup \CC_n$ is quasiconvex in $F_n$ with quasiconvexity constant dependent only on 
	$\delta$. Further, the Gromov inner product $\langle x, z \rangle_{\pi_n(x)}$ is bounded  above uniformly in terms of $\delta$ for any $z \in \CC_n$. Hence,
	$\langle \phi_{n+1}(x), \phi_{n+1}(z) \rangle_{\phi_{n+1}(\pi_n(x))}$   is 
	  bounded  above uniformly in terms of $\delta$ and the quasi-isometry constant of $\phi_{n+1}$, i.e.\ in terms of $\delta, K, \ep$.

	  Note also that  $\langle \phi_{n+1}(x), \phi_{n+1}(z) \rangle_{\pi_{n+1}(\phi_{n+1}(x))}$
	   is 
	  bounded  above uniformly in terms of $\delta$ (since $\phi_{n+1}(z)$ is contained in the $K'$-neighborhood of $\CC_{n+1}$).  It follows that {$d(\pi_{n+1}(\phi_{n+1}(x)),\phi_{n+1}(\pi_n(x)) )$ } is 
	  bounded  above uniformly in terms of  $\delta, K, \ep$. (See the last paragraph of the proof of  \cite[Lemma 3.5]{mitra-trees} for instance.)
	  This completes Case 2, and hence proves the inequality in the statement of the theorem.
	  
	  {The inequality in the theorem now gives one of the inequalities for $\FF\CC_0 \to X$ to be a quasi-isometric embedding.
	  	The other inequality is obvious from the construction of $\FF\CC_0$.
	  	Finally,  it follows from stability of quasi-geodesics in hyperbolic spaces that $\VV(\FF\CC_0)$ is quasiconvex in $X$.}
\end{proof}

\begin{rmk}\label{rmk-unifqc}
Note that the proof above shows that the constant $C$ depends only on  $\delta, K, \ep$. In particular, the quasiconvexity constant of  $\FF\CC_0$  depends  only on  $\delta, K, \ep$. 
\end{rmk}
For each flowed image $\FF\CC_0$, there is a surjective simplicial  map $p:\FF\CC_0\to [0,\infty)$ taking $\CC_n$ to $n$ and an edge of the form $(u,v)$ with $u\in \CC_n$ and $v\in \CC_{n+1}$ to $[n,n+1]$.
\begin{cor}\label{cor:flowedisMB}
	 Let $p:\FF\CC_0\to [0,\infty)$ be as above. Then $p$ is a  hyperbolic metric graph bundle. 
\end{cor}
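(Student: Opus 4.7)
The plan is to verify both clauses of \cref{defn-mgbdl} for the projection $p:\FF\CC_0\to[0,\infty)$ and then upgrade to hyperbolicity by using that $\VV(\FF\CC_0)$ quasi-isometrically embeds in $X$. For clause (1), each fiber $\CC_n=\whull(\ZZ_n)$ is a connected subgraph by \cref{rmk:hullqconvex}; what remains is to establish uniform metric properness of the inclusion $\CC_n\hookrightarrow\FF\CC_0$ with respect to the intrinsic metric $d_{\CC_n}$. The key observation is that every edge of $\FF\CC_0$ corresponds to a path in $X$ of length at most $K'+1$: edges lying in some $\CC_n$ have $d_X$-length $1$, while an edge joining $u\in\CC_n$ to $v\in\CC_{n+1}$ has $d_X(u,v)\le 1+K'$ because either $d(\phi_{n+1}(u),v)\le K'$ (and $u,\phi_{n+1}(u)$ are $X$-adjacent), or a symmetric estimate holds via $\overline{\phi_{n+1}}$. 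This yields $d_X(x,y)\le (K'+1)\,d_{\FF\CC_0}(x,y)$ for all $x,y\in\VV(\FF\CC_0)$. Combining this with the uniform metric properness of the original bundle $p:X\to[0,\infty)$ (controlling $d_n$ on $F_n$ in terms of $d_X$) and the $4\delta$-quasiconvexity of $\CC_n$ in $F_n$ (controlling $d_{\CC_n}$ in terms of $d_n|_{\CC_n}$), produces a proper function independent of $n$.

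For clause (2), given $u\in\CC_n$ the vertex $\phi_{n+1}(u)\in F_{n+1}$ lies within $K'$ of $\CC_{n+1}$ by the Hausdorff bound $\dh(\phi_{n+1}(\CC_n),\CC_{n+1})\le K'$; any $v\in\CC_{n+1}$ realising this bound is joined to $u$ by an edge of $\FF\CC_0$ by \cref{def-flowedimage}. The symmetric statement starting from a vertex of $\CC_{n+1}$ is handled by the analogous bound for $\overline{\phi_{n+1}}$.

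To conclude hyperbolicity of $\FF\CC_0$, I would invoke \cref{thm-coarselipretract} and \cref{rmk-unifqc}: the coarse retraction $\Pi:X\to\FF\CC_0$, which is the identity on $\VV(\FF\CC_0)$, is coarsely Lipschitz, and together with the inequality $d_X\le(K'+1)\,d_{\FF\CC_0}$ from the previous paragraph shows that the inclusion $\VV(\FF\CC_0)\hookrightarrow X$ is a quasi-isometric embedding with constants depending only on $\delta,K,\ep$. Since $X$ is hyperbolic and hyperbolicity is a quasi-isometry invariant, $\FF\CC_0$ is hyperbolic, completing the proof. I do not anticipate any serious obstacle; the entire argument is essentially bookkeeping that packages the constructions of \cref{sec:flow} into the formal framework of \cref{defn-mgbdl}, with the uniformity of constants inherited from the controlled hyperbolic fibers hypothesis on the original bundle.
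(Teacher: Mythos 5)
Your argument is correct and follows essentially the same route as the paper: verify condition (1) of \cref{defn-mgbdl} by combining the comparison of $d_{\FF\CC_0}$ with $d_X$, the uniform metric properness of the fibers $F_n\subset X$, and the $4\delta$-quasiconvexity of $\CC_n$ in $F_n$; verify condition (2) from the choice of $K'$ in \cref{def-flowedimage}; and deduce hyperbolicity from \cref{thm-coarselipretract}. The only cosmetic difference is that you make the edge-length bound $d_X\le (K'+1)\,d_{\FF\CC_0}$ explicit (with a slightly larger constant needed in the $\overline{\phi_{n+1}}$ case, as you note), where the paper simply cites the quasiconvexity statement of \cref{thm-coarselipretract}.
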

\begin{proof}
	Since:
	\begin{enumerate}
		\item each $\CC_n$ is $4\delta$-quasiconvex in $F_n$ by Remark \ref{rmk:hullqconvex},
		\item there is a function $f$ such that each inclusion $F_n\to X$ is metrically proper as measured by $f$ by Definition \ref{defn-mgbdl},
		\item $\VV(\FF\CC_0)$ is quasiconvex in $X$ by Theorem \ref{thm-coarselipretract},
	\end{enumerate}
	it follows that there is a function $f':\bbN\to\bbN$ such that each inclusion $\CC_n\to \FF\CC_0$ is metrically proper as measured by $f'$. Remark \ref{rmk:hullqconvex} also says  each $\CC_n=p^{-1}(n)$ is a connected subgraph of $\FF\CC_0$,  so condition (1) of Definition \ref{defn-mgbdl} is satisfied. The choice of  $K'$ in Definition \ref{def-flowedimage} ensures that  condition (2) of Definition \ref{defn-mgbdl} is also satisfied, so $p$ is a metric graph bundle. Hyperbolicity of $\FF\CC_0$ follows from Theorem \ref{thm-coarselipretract}.
\end{proof}

We provide an alternate description of $\FF\CC_0$. 
For every pair $z_1 \neq  z_2 \in \ZZ$,  $\CC(z_1,z_2)$ is the $4\delta$-thickening of the  union of all bi-infinite geodesics joining
$z_1, z_2$ in $F_0$. Any two  such geodesics lie in
a $2\delta-$neighborhood of one  other. Denote the flowed image 
 $\FF\CC(z_1,z_2)$ of
 $\CC(z_1,z_2)$
by
$\LL(z_1,z_2)$. We refer to $\LL(z_1,z_2)$ as the \emph{ladder} corresponding
to $z_1, z_2$.

\begin{rmk}
In \cite{mitra-ct,mitra-trees}, the flowed image of a 
geodesic in $F_0$ was called a ladder. Thus, $\LL(z_1,z_2)$ is a $6\delta-$thickening of a ladder in the sense of \cite{mitra-ct,mitra-trees}.
Since these two notions are $6\delta-$coarsely equivalent, we continue to
use the terminology.
\end{rmk}

\begin{lemma}\label{lem-union} The vertex sets of $\FF \CC_0$ and  $\bigcup_{z_1\neq z_2 \in \ZZ} \LL(z_1,z_2)$ coincide.
 \end{lemma}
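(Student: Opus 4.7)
The plan is to reduce the statement to a level-by-level comparison of vertex sets, exploit the fact that the weak convex hull of a set decomposes (on vertices) as the union of weak convex hulls of its pairs, and then interchange unions.

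First I would recall that by \cref{def-flowedimage}, every vertex of $\FF\CC_0$ lies in some $\CC_n$ (the added edges connect vertices in adjacent $\CC_n$'s but contribute no new vertices). Similarly, every vertex of $\LL(z_1,z_2)=\FF\CC(z_1,z_2)$ lies in $\CC_n(z_1,z_2):=\whull_{F_n}(\{w_1,w_2\})$, where $w_i=(\partial\phi_n\circ\cdots\circ\partial\phi_1)(z_i)\in\ZZ_n$. Since each $\phi_n$ is a $(K,\ep)$-quasi-isometry, $\partial\phi_n$ is a homeomorphism, and so the composition $\partial\phi_n\circ\cdots\circ\partial\phi_1$ restricts to a bijection $\ZZ \to \ZZ_n$ preserving distinctness of pairs.

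Next, the key combinatorial observation: for any closed $\UU\subseteq\partial F_n$,
\[
\VV(\whull(\UU))=\bigcup_{u_1\neq u_2\in \UU}\VV(\whull(\{u_1,u_2\})).
\]
This is immediate from \cref{def-weakhull}: a vertex $v$ lies in $\whull(\UU)$ iff $d_n(v,\gamma)\le 4\delta$ for some bi-infinite geodesic $\gamma$ joining some pair $u_1\neq u_2$ in $\UU$, which is precisely the condition for $v\in\whull(\{u_1,u_2\})$. Applying this to $\UU=\ZZ_n$ yields
\[
\VV(\CC_n)=\bigcup_{w_1\neq w_2\in \ZZ_n}\VV(\CC_n(w_1,w_2))=\bigcup_{z_1\neq z_2\in \ZZ}\VV(\CC_n(z_1,z_2)),
\]
using the bijection from the previous paragraph to reindex.

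Finally I would conclude by swapping the order of unions:
\[
\VV(\FF\CC_0)=\bigcup_{n\ge 0}\VV(\CC_n)=\bigcup_{n\ge 0}\bigcup_{z_1\neq z_2\in\ZZ}\VV(\CC_n(z_1,z_2))=\bigcup_{z_1\neq z_2\in\ZZ}\VV(\LL(z_1,z_2)).
\]
The reverse inclusion is the easy direction: for $z_1\neq z_2\in \ZZ$, $\{w_1,w_2\}\subseteq \ZZ_n$ gives $\CC_n(z_1,z_2)\subseteq \whull(\ZZ_n)=\CC_n$, so $\LL(z_1,z_2)\subseteq \FF\CC_0$ on vertices. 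There is no real obstacle here; the only thing to be careful about is that distinctness of boundary points is preserved by $\partial\phi_n$ (which is automatic since each $\partial\phi_n$ is a homeomorphism), and that the $4\delta$-thickening commutes with unions at the vertex level, which it does since thickening is defined pointwise by a distance condition.
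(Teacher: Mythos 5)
Your proof is correct and takes essentially the same route as the paper's: there, too, one identifies the vertex set of $\FF\CC_0$ with $\bigcup_{n}\CC_n$, views each $\CC_n=\whull(\ZZ_n)$ as a union over pairs $z_1\neq z_2\in\ZZ$ of the corresponding pair-hulls, and interchanges the order of the two unions. Your additional remarks (reindexing $\ZZ\to\ZZ_n$ via the homeomorphisms $\partial\phi_n$, and the fact that the $4\delta$-thickening commutes with unions at the vertex level) simply make explicit what the paper leaves implicit.
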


\begin{proof} Recall that $\FF\CC_0$ consists of $\bigcup_{n=0}^\infty \CC_n$ with some added edges. It is enough to show that   \[\bigcup_{n=0}^\infty \CC_n=\bigcup_{z_1\neq z_2 \in \ZZ} \LL(z_1,z_2).\]

The left-hand  side gives a union over $n=0, 1, 2, \cdots$ of the weak
convex hulls of $\ZZ_n$, where the latter is a union over all $z_1\neq z_2 \in \ZZ$. 

The right-hand  side is obtained by reversing the order in which this
	union is taken. For every $z_1\neq z_2 \in \ZZ$, the flowed image
		is $\LL(z_1,z_2)$. Taking the union now over  $z_1\neq z_2 \in \ZZ$ 
			gives the right-hand  side.
\end{proof}

Given a hyperbolic metric space $X$ and a subset $Y\subseteq X$, we let $\Lambda_X(Y)\subseteq \partial X$ denote the \emph{limit set} {of $Y$.

 \begin{lemma}\label{lem-flowhoroball}
Let  $p:X\to [0,\infty)$ be a hyperbolic  metric graph bundle with controlled hyperbolic fibers.
Let $\ZZ_n, \CC_n, \FF\CC_0, \LL(z_1,z_2)$ be as above. 
Suppose that the image of the Cannon-Thurston map $\hat{i}: \partial \CC_0 \to \partial X$ is a singleton $w$. Then there exists $C \geq 1$ such that
\begin{enumerate}
\item $ \LL(z_1,z_2)$ is $C-$quasiconvex in $\FF\CC_0$ for all $z_1 \neq z_2$,
\item {$\Lambda_X(\LL(z_1,z_2)) = \{w\}$}.
\end{enumerate}
\end{lemma}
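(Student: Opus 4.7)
My plan has two parts, corresponding to the two conclusions.

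For (1), I apply \cref{thm-coarselipretract} together with \cref{rmk-unifqc} after replacing the closed set $\ZZ$ with the two-point set $\{z_1,z_2\} \subseteq \partial F_0$. This yields that $\LL(z_1,z_2)$ is quasiconvex in $X$ with constants depending only on $\delta, K, \ep$. Since $\{z_1,z_2\} \subseteq \ZZ_0$ forces $\LL(z_1,z_2) \subseteq \VV(\FF\CC_0)$, and since \cref{thm-coarselipretract} also gives that $\FF\CC_0$ is quasi-isometrically embedded in $X$, this uniform quasiconvexity in $X$ upgrades to uniform quasiconvexity in $\FF\CC_0$.

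For (2), I first establish that the Cannon-Thurston maps of different fibers are compatible: since $d_X(x,\phi_n(x)) \leq 1$, any geodesic ray in $F_{n-1}$ toward $\xi \in \partial F_{n-1}$ and its $\phi_n$-image in $F_n$ stay at bounded $X$-distance, hence converge to the same point in $\partial X$. This shows $\hat i_n \circ \partial\phi_n = \hat i_{n-1}$, and iterating gives $\hat i_k(\ZZ_k) = \{w\}$ for all $k$; in particular $\hat i_k(z_j^{(k)}) = w$ for $j=1,2$. Applying \cref{cor:flowedisMB} to the pair $\{z_1,z_2\}$, the space $\LL(z_1,z_2)$ is itself a hyperbolic metric graph bundle over $[0,\infty)$ whose fibers $\CC_k(z_1^{(k)},z_2^{(k)})$ are thickenings of bi-infinite geodesics in $F_k$ with endpoints $z_1^{(k)}$ and $z_2^{(k)}$. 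Together with (1), this makes $\LL(z_1,z_2)$ a QI-embedded hyperbolic subspace of $X$, so its inclusion extends continuously to a Cannon-Thurston map $\partial \LL(z_1,z_2) \to \partial X$ whose image equals $\Lambda_X(\LL(z_1,z_2))$.

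I finish by showing this image equals $\{w\}$ via a density argument. The fiber endpoints $\{z_1^{(k)}, z_2^{(k)} : k \geq 0\}$ embed into $\partial \LL(z_1,z_2)$ via the QI embedding of each fiber in $\LL(z_1,z_2)$, and each maps to $w$ in $\partial X$ by the previous step. To see density, pick $\eta \in \partial \LL(z_1,z_2)$ with geodesic-ray representative $\rho$, and for each $t$ concatenate $\rho|_{[0,t]}$ with a fiber geodesic in $\CC_{p(\rho(t))}(z_1^{(p(\rho(t)))},z_2^{(p(\rho(t)))})$ running from $\rho(t)$ toward $z_1^{(p(\rho(t)))}$. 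A standard angle argument at the concatenation point---the base-direction and fiber-direction components of $\LL(z_1,z_2)$ are transverse---shows this concatenation is a uniform quasi-geodesic, whence $(z_1^{(p(\rho(t)))} \mid \eta)_o \geq t - O(\delta)$ and so $z_1^{(p(\rho(t)))} \to \eta$ as $t \to \infty$. Continuity of the Cannon-Thurston map then forces $\Lambda_X(\LL(z_1,z_2)) \subseteq \{w\}$, and non-triviality of the limit set gives equality. The main technical obstacle is verifying that the concatenation is a uniform quasi-geodesic in $\LL(z_1,z_2)$, which relies on fibers being uniformly quasi-isometrically embedded---a standard feature of hyperbolic metric graph bundles in this setting.
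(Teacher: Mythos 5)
Your treatment of (1) is correct and is essentially the paper's own argument: uniform quasiconvexity of $\LL(z_1,z_2)$ in $X$ from \cref{thm-coarselipretract} and \cref{rmk-unifqc}, upgraded to quasiconvexity in $\FF\CC_0$ because the latter is quasi-isometrically embedded in $X$.

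Part (2), however, has a genuine gap at the density step. Your argument rests on the claim that the fibers $\CC_k(z_1^{(k)},z_2^{(k)})$ are uniformly quasi-isometrically embedded in $\LL(z_1,z_2)$ (``a standard feature of hyperbolic metric graph bundles''). This is false: \cref{defn-mgbdl} only gives uniform \emph{metric properness} of fibers, and since the total space is hyperbolic, the flaring condition of \cref{thm-flare-nec} forces fiber distances to contract exponentially as one moves up the ray, so the fiber geodesics are exponentially distorted in the ladder. Indeed your picture is internally inconsistent with the hypothesis: if the fiber geodesic $l$ from $z_1$ to $z_2$ were quasi-isometrically embedded in $\LL(z_1,z_2)$, then $z_1,z_2$ would determine two distinct points of $\partial\LL(z_1,z_2)$, and since $\LL(z_1,z_2)$ is quasiconvex in $X$ (your part (1)) the boundary map $\partial\LL(z_1,z_2)\to\partial X$ is injective, contradicting $\hat i(z_1)=\hat i(z_2)=w$. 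Consequently $z_1^{(k)}$ does not define a point of $\partial\LL(z_1,z_2)$ in the way you use it, and the concatenation of $\rho|_{[0,t]}$ with a fiber geodesic is not a uniform quasigeodesic; the ``angle/transversality'' step has no basis because the fiber direction is distorted. What actually must be shown---and this is where the hypothesis that $\hat i(\partial\CC_0)$ is a singleton enters---is that every qi-section of $[0,\infty)$ contained in $\LL(z_1,z_2)$ is asymptotic to $w$ and that the tails $\LL_n(z_1,z_2)=p^{-1}([n,\infty))\cap\LL(z_1,z_2)$ converge to $w$. The paper obtains this via Mitra's coarse-separation argument: the projected qi-section $\Pi_{12}\circ\sigma$ through a point $o\in l$ coarsely separates the ladder, while geodesics $[x_n,y_n]_X$ with $x_n\to z_1$, $y_n\to z_2$ along $l$ exit every ball about $o$ precisely because $\hat i(z_1)=\hat i(z_2)=w$; this forces all such sections to converge to $w$, and uniform quasiconvexity of the tails then pins down the limit set. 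Nothing in your proposal supplies or replaces this step, so conclusion (2) is not established.
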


\begin{proof} The first conclusion is an immediate consequence 
	of Theorem~\ref{thm-coarselipretract},  Remark~\ref{rmk-unifqc} and the fact that geodesics are convex.

The second conclusion follows from \cite[Section 4.2]{mitra-endlam} (see in particular,
Lemmas 4.8 and 4.9 of \cite{mitra-endlam}). We briefly sketch the essential features of the argument from  \cite[Section 4.2]{mitra-endlam} for completeness
as the present context is somewhat more general.

Let $l$ be a bi-infinite geodesic in $F_0$ joining $z_1,z_2$. 
{Let $\Pi_{12}$ denote the projection onto $ \LL(z_1,z_2)$
	constructed as in Theorem~\ref{thm-coarselipretract}.}
 For $o \in l$,
let $\sigma$ be a $K-$qi-section of $[0,\infty)$ in $X$ containing $o$. By Theorem~\ref{thm-coarselipretract}, {$\Pi_{12}\circ \sigma$} is a $CK-$qi-section of $[0,\infty)$ contained in $ \LL(z_1,z_2)$. 

Also, {$\Pi_{12}\circ \sigma ([0,\infty))$} coarsely separates $ \LL(z_1,z_2)$, i.e.\
$ \LL(z_1,z_2) \setminus \Pi_{12}\circ \sigma ([0,\infty))$ has precisely two unbounded components coarsely intersecting along r{blue}{$\Pi_{12}\circ \sigma ([0,\infty))$}. (This is
the content of \cite[Lemma 4.8]{mitra-endlam}, and the proof there goes through without change in the present context.)
Further, for $x_n \to z_1$ and $y_n \to z_2$ {with $x_n, y_n \in l$, } geodesics
$[x_n,y_n]_X$ in $X$ joining $x_n, y_n$ lie outside an $M_n-$ball about $o$, where
$M_n\to \infty$ as $n\to \infty$. This follows from the fact that the Cannon-Thurston map $\hat{i} : F_0 \cup \partial F_0 \to X \cup \partial X$ satisfies $\hat{i}(z_1) = \hat{i}(z_2)=w$. 

Let $\Pi_{12}([x_n,y_n]_X)$ denote the sequence of  images of vertices of  $[x_n,y_n]_X$ under $\Pi_{12}$, so that  $\Pi_{12}([x_n,y_n]_X)$
	is coarsely connected. By Theorem~\ref{thm-coarselipretract}, { $\LL(z_1,z_2) $ is $C-$quasiconvex, forcing $[x_n,y_n]_X$ to lie in a $C-$neighborhood of $\LL(z_1,z_2) $.
Therefore the inequality in Theorem~\ref{thm-coarselipretract} forces
$\Pi_{12}([x_n,y_n]_X)$ to lie in a uniformly bounded neighborhood of
$[x_n,y_n]_X$. }
	Hence, (after decreasing each $M_n$ by a uniformly bounded amount if necessary) 
 {$\Pi_{12}([x_n,y_n]_X)$ lies outside an $M_n-$ball about $o$, where
 	$M_n\to \infty$ as $n\to \infty$. }

 Since {$\Pi_{12}\circ \sigma ([0,\infty))$} coarsely separates $ \LL(z_1,z_2)$, and $x_n, y_n$ lie in separate components (for $n$ large enough), 
 {$\Pi_{12}\circ \sigma ([0,\infty))$ and $\Pi_{12}([x_n,y_n]_X)$} (coarsely) intersect at a point $w_n$ such that $d(w_n,o) \geq \frac{M_n}{C}$. Hence, $w_n \to w$ in $X \cup \partial X$. 
 Since $o$ was arbitrary, all qi-sections of $[0,\infty)$ contained in  $ \LL(z_1,z_2)$ converge to $w$. In particular, they are all asymptotic. (This completes a sketch of a proof of \cite[Lemma 4.9]{mitra-endlam} in the present
 context.)

 Finally, let 
 {
 	$\LL_n(z_1, z_2) = p^{-1}([n,\infty)) \cap \LL(z_1,z_2)$.}
 	{ Then, $d(o, \LL_n(z_1, z_2)) \geq n$. Further, $\LL_n(z_1,z_2)$ is the union over all $CK$-qi-sections of the form $\Pi_{12}\circ \sigma$ where $\sigma$ is a $K$-qi-section starting at a point in $\LL_n(z_1,z_2)$. 
 		As we have seen in the previous paragraph, these $CK$-qi-sections are all asymptotic to $w$. {Thus, $\LL_n(z_1,z_2)$ is the
 		union of uniform (independent of $n$) quasigeodesics, all asymptotic to $w$. Let $u_1, u_2
 		\in \LL_n(z_1,z_2)$. Then there exist uniform quasigeodesics $r_1, r_2$ containing $u_1, u_2$ respectively such that 
 		\begin{enumerate}
 				\item 	$r_1, r_2$ are asymptotic to $w$, and
 		\item 	$r_1, r_2$ are both
 		contained in $\LL_n(z_1,z_2)$.
 		\end{enumerate}
 	 Hence, the geodesic $[u_1, u_2]_X$ is contained in a uniformly bounded neighborhood of $r_1 \cup r_2$.
 		In particular,  $[u_1, u_2]_X$ is contained in a uniformly bounded neighborhood of $\LL_n(z_1,z_2)$. Since $u_1, u_2
 		\in \LL_n(z_1,z_2)$ are arbitrary, it}
 		follows that $\LL_n(z_1,z_2)$ is uniformly quasiconvex (independently of $n$). 
 	Thus, $\LL_n(z_1, z_2)$ lies in a small neighborhood  of $w$ with respect to the visual metric in $X \cup \partial X$. (In fact, the visual diameter of 
 $\LL_n(z_1, z_2)$ is of the order of	$\exp(-\lambda n)$, where $\lambda$ is independent of $n$.)}
 	Equivalently, $\LL_n(z_1, z_2) \to w$ as $n\to \infty$ in the Hausdorff topology on closed subsets of $X \cup \partial X$.
 {Let $q_n \in \LL(z_1,z_2)$ be a sequence which converges to a point in $\partial X$, then either $q_n$ has a subsequence which lies in a bounded neighborhood of $l$, in which case $q_n\to w$ since $\hat i(z_1)=\hat i(z_2)=w$ or it has a subsequence such that $q_n\in \LL_n(z_1,z_2)$ and so $q_n\to w$ as explained above. In either case $q_n \to w$ and we conclude that $\Lambda_X(\LL(z_1,z_2)) = \{w\}$.}
\end{proof}

\begin{lemma}\label{lem-lthoro1} We continue with the hypotheses of Lemma~\ref{lem-flowhoroball}.
Let $z_{1n} \neq  z_{2n} \in \ZZ$ such that $z_{1n} \to z_1, z_{2n} \to z_2$
and $z_1 \neq z_2$. Let 
{$q \in \Lambda_X (\FF\CC_0)$} be an accumulation point
of $ \LL(z_{1n},z_{2n})$. Then $q=w$.
\end{lemma}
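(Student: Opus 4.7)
The plan is to pick a subsequence $q_k \in \LL(z_{1n_k}, z_{2n_k})$ with $q_k \to q$, let $m_k := p(q_k)$, and split the argument according to whether $m_k$ stays bounded or tends to infinity. Passing to a subsequence, we may assume exactly one of these alternatives holds.

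If $m_k \to \infty$, the strategy will be to produce, through each $q_k$, a uniform $K'$-quasi-isometric half-section $\tau_k: [m_k, \infty) \to X$ lying in $\LL(z_{1n_k}, z_{2n_k})$ with $\tau_k(m_k) = q_k$ and $\tau_k(t) \to w$ as $t\to\infty$; this is the $\Pi\circ\sigma$ construction from the proof of \cref{lem-flowhoroball}, whose quasi-isometry constants are uniform in $k$ by \cref{rmk-unifqc}. Since $p$ is $1$-Lipschitz on vertices of $X$ and a fixed basepoint $v\in F_0$ satisfies $p(v)=0$, the image of $\tau_k$ lies in $p^{-1}([m_k,\infty))$ and is at distance at least $m_k$ from $v$. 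Stability of quasi-geodesics will then give $d(v, [q_k, w]_X) \geq m_k - O(1)$, whence $(q_k | w)_v \geq m_k - O(1) \to \infty$, so $q_k \to w$ in the visual metric and $q = w$. Justifying that this Gromov-product estimate is \emph{uniform in $k$} (so the $O(1)$ does not degenerate as the ladders vary) is the main technical point.

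If instead (after a further subsequence) $m_k = m$ is constant, then $q_k \in \LL(z_{1n_k}, z_{2n_k}) \cap F_m$ is contained in the weak hull $\whull_{F_m}(\{(z_{1n_k})_m,(z_{2n_k})_m\}) \subseteq \CC_m$. The sequence $q_k$ must be unbounded in the intrinsic metric $d_m$ of $F_m$, since the inclusion $F_m \hookrightarrow X$ is proper by \cref{defn-mgbdl} while $q_k \to q \in \partial X$. After one further subsequence, $q_k \to \xi$ in $F_m \cup \partial F_m$, and the quasiconvexity of $\CC_m$ in $F_m$ (\cref{rmk:hullqconvex}), together with $\ZZ_m$ being closed (as a homeomorphic image of the closed set $\ZZ$), forces $\xi \in \Lambda_{F_m}(\CC_m) = \ZZ_m$. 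Applying the Cannon--Thurston map $\hat{i}_m: F_m \cup \partial F_m \to X \cup \partial X$ from \cref{thm-ct}, continuity gives $q = \hat{i}_m(\xi)$.

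It will then remain to show $\hat{i}_m(\ZZ_m) = \{w\}$. For $z \in \ZZ$ and $x_\ell \in F_0$ with $x_\ell \to z$, the composition $\phi_m\circ\cdots\circ\phi_1(x_\ell)$ converges to $(z)_m \in \partial F_m$. On the ambient side $x_\ell \to w$ by the hypothesis that $\hat{i}(\partial \CC_0) = \{w\}$, and the edges introduced in \cref{sec:flow} give $d_X(x_\ell, \phi_m\circ\cdots\circ\phi_1(x_\ell)) \leq m$, so $\phi_m\circ\cdots\circ\phi_1(x_\ell) \to w$ in $X \cup \partial X$ as well. Continuity of $\hat{i}_m$ yields $\hat{i}_m((z)_m) = w$, and since every point of $\ZZ_m$ has this form, $q = \hat{i}_m(\xi) = w$ in this case too, completing both alternatives.
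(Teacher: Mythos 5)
Your argument is correct, but it is a genuinely different route from the one in the paper. The paper exploits the hypothesis $z_1\neq z_2$: since the limit points are distinct, the hulls $\CC(z_{1n},z_{2n})$, and hence the ladders $\LL(z_{1n},z_{2n})$, converge in the Hausdorff topology into a bounded neighborhood of the fixed ladder $\LL(z_1,z_2)$, all passing through a fixed ball; a general fact about Hausdorff limits of uniformly quasiconvex sets then places any accumulation point in $\Lambda_X(\LL(z_1,z_2))$, which is $\{w\}$ by \cref{lem-flowhoroball}. You instead dichotomize on the base coordinate $m_k=p(q_k)$: when $m_k\to\infty$ you run uniform quasi-geodesic rays $\Pi\circ\sigma$ inside each individual ladder (asymptotic to $w$ by \cref{lem-flowhoroball}(2), with constants uniform in $k$ by \cref{thm-coarselipretract} and \cref{rmk-unifqc}) and conclude via a Gromov-product estimate from the basepoint; when $m_k$ is bounded you pass to the fiber $F_m$, use the Cannon--Thurston maps of \cref{thm-ct} for every fiber, and show $\hat i_m(\ZZ_m)=\{w\}$ by pushing sequences $x_\ell\to z$ through $\phi_m\circ\cdots\circ\phi_1$, which moves points at most $m$ in $X$. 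The uniformity issue you flag in the deep case is real but is exactly what \cref{rmk-unifqc} (together with \cref{lem-qiexists} and stability of uniform quasi-geodesics in $X$) supplies, so it is not a gap. The trade-off: the paper's proof is shorter once its Hausdorff-limit observation is in place, but it genuinely needs $z_1\neq z_2$ to have a limiting ladder through a fixed ball; your argument never uses $z_1\neq z_2$, so it simultaneously proves \cref{lem-lthoro2}, which the paper handles by a separate argument with flowed hulls of triples and a properness estimate. (Your bounded case uses sequential compactness of $F_m\cup\partial F_m$, i.e.\ properness of the fibers, but this is on the same footing as the implicit properness used throughout the paper's own proofs.)
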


 \begin{proof}
	{We first note the following. Let $K_n \subset X$ be a sequence of uniformly
		quasiconvex sets, all passing through a fixed $D_0-$neighborhood of $o \in X$, such that $K_n \to K_\infty$ in the Hausdorff topology on
		{closed subsets of} $X$. 	{(Here, as 
			below, the Hausdorff topology on 	closed subsets of $X$ is obtained as follows. First consider the Hausdorff topology on 	closed, and hence compact, subsets of the compact space $X\cup \partial X$. Then restrict this topology to closed subsets of
			$X$.  Note that the Hausdorff topology on compact subsets of $X\cup \partial X$ is obtained by first metrizing $X\cup \partial X$ and then equipping the collection of its compact subsets with the Hausdorff metric.)}
		
		Then any accumulation point in $\partial X$ of a sequence $\{z_n: z_n \in K_n\}$
	  lies in 
		$ \Lambda_X(K_\infty)$. Indeed, we can assume without loss of generality that
		$K_n$ consists of a union of geodesics from $o$, as the union of geodesics all meeting at $o$ is uniformly quasiconvex. Thus, we may assume that the sequence of geodesics $\{[o,z_n]: z_n \in K_n\}$
		has the property that $z_n \to z \in \partial X$. It follows that
		there exist points $y_n \in [o,z_n]$ such that
		\begin{enumerate}
		\item $d(y_n,o) \to \infty$; hence  $y_n \to z \in \partial X$.
		\item $d(y_n, K_\infty)$ is uniformly bounded.
		\end{enumerate}
	It follows that $z \in 	 \Lambda_X(K_\infty)$. }
		
			Since $z_{1n} \to z_1, z_{2n} \to z_2$ in $\partial \CC_0$, 
	the limit of $\CC(z_{1n},z_{2n})$  lies in a $2\delta-$neighborhood of $\CC(z_1,z_2)$. 
	Here convergence is taken in the Hausdorff topology on
		closed subsets of $\CC_0$, 	{i.e.\ we first consider the Hausdorff topology on 	closed subsets of the compact space $\CC_0\cup \partial \CC_0$	and restrict this to  	closed subsets of  $\CC_0$.}
		
	{	It follows that  any limit of $\LL(z_{1n},z_{2n})$  in the Hausdorff topology on
	closed subsets of $\FF\CC_0$) lies in a $2\delta-$neighborhood of $\LL(z_1,z_2)$. The Lemma now follows from the first paragraph of the proof using the fact that $\LL(z_{1n},z_{2n})$  is uniformly quasiconvex
	by Theorem~\ref{thm-coarselipretract}.}
\end{proof}

\begin{lemma}\label{lem-lthoro2} We continue with the hypotheses of Lemma~\ref{lem-flowhoroball}.
	Let $z_{1n} \neq  z_{2n} \in \ZZ$ such that $z_{1n} \to z, z_{2n} \to z$
	in $F_0 \cup \partial F_0$. Let $q \in \partial \FF\CC_0$ such that $q$ is an accumulation point
	of $ \LL(z_{1n},z_{2n})$. Then $q=w$.
\end{lemma}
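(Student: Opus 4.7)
Let $x_n \in \LL(z_{1n},z_{2n})$ be a sequence converging to $q$ in $\partial\FF\CC_0 \subseteq \partial X$, and set $m_n := p(x_n)$. My aim is to show $q = w$. Two preliminary observations will be used throughout. First, since $\ZZ \subseteq \partial F_0$ is closed and $z_{1n},z_{2n} \in \ZZ$ both converge to $z$, we have $z \in \ZZ \subseteq \partial\CC_0$. Second, fixing a basepoint $o \in F_0$, the Gromov product $(z_{1n},z_{2n})_o \to \infty$, so every bi-infinite geodesic in $F_0$ from $z_{1n}$ to $z_{2n}$, and hence also $\CC(z_{1n},z_{2n})$, exits every bounded region of $F_0$. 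Consequently, any subsequential accumulation point in $\partial F_0$ of a sequence of points drawn from $\bigcup_n \CC(z_{1n},z_{2n})$ lies in $\partial\CC_0$, and is therefore sent to $w$ by $\hat i$ under the standing hypothesis.

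\emph{Case 1: $(m_n)$ is bounded.} After passing to a subsequence, assume $m_n = m$ is constant, so $x_n \in \CC_m \cap \LL(z_{1n},z_{2n})$. The composition $\bar\phi_1 \circ \cdots \circ \bar\phi_m: F_m \to F_0$ is a quasi-isometry with uniform constants, and it maps the fiber slice $\LL(z_{1n},z_{2n}) \cap F_m$ into a bounded $F_0$-neighborhood of $\CC(z_{1n},z_{2n})$. Setting $y_n := \bar\phi_1 \circ \cdots \circ \bar\phi_m(x_n) \in F_0$, we have $d_X(x_n,y_n)$ bounded (by a constant depending on $m$). By the preliminaries a further subsequence of $y_n$ converges in $\partial F_0$ to some $z' \in \partial\CC_0$, so $\hat i(z') = w$ gives $y_n \to w$ in $\hat X$, and therefore $x_n \to w$.

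\emph{Case 2: $m_n \to \infty$.} The proof of \cref{lem-flowhoroball} shows that $\LL_k(z_1,z_2) := p^{-1}([k,\infty)) \cap \LL(z_1,z_2)$ has visual diameter $O(e^{-\lambda k})$ in $X \cup \partial X$ for some $\lambda > 0$ depending only on the bundle constants and not on the ladder. Moreover, $\LL_k(z_1,z_2)$ is unbounded in $X$ with $\Lambda_X(\LL_k(z_1,z_2)) \subseteq \Lambda_X(\LL(z_1,z_2)) = \{w\}$, so $w$ lies in its visual closure. Specializing to $\LL_{m_n}(z_{1n},z_{2n})$, which contains $x_n$, the visual distance from $x_n$ to $w$ is $O(e^{-\lambda m_n}) \to 0$; combined with $d_X(o,x_n) \geq m_n \to \infty$ (since $p$ is $1$-Lipschitz), this yields $x_n \to w$ in $\hat X$.

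\emph{Main obstacle.} The one genuinely non-routine step is the uniformity of the exponential-decay rate $\lambda$ across all ladders in Case~2. This uniformity is already present in the proof of \cref{lem-flowhoroball}, inherited from the uniform quasiconvexity constant of flowed images (\cref{rmk-unifqc}) and the uniform asymptotic-to-$w$ behavior of qi-sections inside each $\LL(z_1,z_2)$. I would either make this uniformity explicit by refining the statement of \cref{lem-flowhoroball}, or derive the needed decay self-contained from the uniform quasiconvexity of each $\LL(z_1,z_2)$ in $\FF\CC_0$ together with \cref{lem-flowhoroball}(2).
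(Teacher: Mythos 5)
Your proposal is correct in substance, but it follows a genuinely different route from the paper's. The paper does not case-split: it enlarges each ladder to the flowed hull $\FF\whull(W_n)$ of the triple $W_n=\{z_{1n},z_{2n},z\}$, shows via a properness estimate (borrowed from the proof of \cite[Theorem 4.3]{mitra-ct}) that $d(\FF\whull(W_n),o)\to\infty$ because the visual diameter of $W_n$ tends to zero, and then uses the uniform quasiconvexity of flowed hulls (Theorem~\ref{thm-coarselipretract}, Remark~\ref{rmk-unifqc}) together with $\hat i(z)=w$ to conclude that every accumulation point of $\FF\whull(W_n)\supset\LL(z_{1n},z_{2n})$ in $\partial\FF\CC_0$ is $w$. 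You instead split on the height $m_n=p(x_n)$ of the approximating points: for bounded heights you pull back to $F_0$ by coarse inverses and invoke continuity of the Cannon--Thurston map (in fact, since both $z_{1n},z_{2n}\to z$, your limit point $z'$ is forced to be $z$ itself, which you could note to shorten the argument; the preliminary claim that such accumulation points lie in $\partial\CC_0$ deserves a line via Gromov products, using that $\ZZ$ is closed); for unbounded heights you use the uniform visual decay of the ladder tails $\LL_k(z_1,z_2)$. The one point you correctly flag as non-routine --- uniformity of the decay rate across all ladders --- does hold: the quasiconvexity constants of ladders depend only on $\delta,K,\ep$ (Remark~\ref{rmk-unifqc}), the qi-sections $\Pi_{12}\circ\sigma$ are uniform quasigeodesics asymptotic to $w$ for every pair by Lemma~\ref{lem-flowhoroball}(2), and $d(o,\LL_k)\ge k$ since $p$ is $1$-Lipschitz, so $(x,w)_o\gtrsim k$ uniformly; alternatively one can bypass quasiconvexity of $\LL_k$ entirely by running the uniform quasigeodesic ray through $x_n$ and applying stability. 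What each approach buys: the paper's argument reuses a single quantitative lemma (``flowing a far hull keeps it far'') and needs no strengthening of Lemma~\ref{lem-flowhoroball}, while yours avoids the auxiliary triple and that properness estimate altogether, at the cost of either refining the statement of Lemma~\ref{lem-flowhoroball} to record the uniform decay or rederiving it, and of using compactness of $\widehat{F_0}$ (properness), which the paper also uses implicitly.
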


\begin{proof}
Let $W_n=\{z_{1n},z_{2n}, z \} \subset \ZZ$. Then the visual diameter of $W_n$
with respect to the visual metric on $\partial F_0$ tends to zero as $n\to \infty$
(since  $z_{1n} \to z, z_{2n} \to z$). Let $\FF \whull (W_n)$ denote the flowed image of the weak convex hulls of $W_n$ as in Definition~\ref{def-flowedimage}. 

Let $o \in \CC_0 \subset \FF\CC_0$ denote a base-point, and $d$ denote
the metric on $ \FF\CC_0$.
The proof of \cite[Theorem 4.3]{mitra-ct} (see the last equation on \cite[p. 535]{mitra-ct}) now shows that   $d(\FF \whull (W_n),o) \to \infty$ 
as $n \to \infty$. We sketch a proof for completeness. It suffices to show that there exists  a proper function $M:\natls \to \natls$ such that $d_{\CC_0} ( \whull(\YY),o) \geq n$ implies that  $d_{\FF\CC_0} (\FF \whull (\YY),o) \geq M(n)$ for any closed subset $\YY \subset \partial \CC_0 \subset \partial F_0$. In fact, there is no reason to restrict to $\CC_0$ and it suffices to prove the existence of  a proper function $M:\natls \to \natls$ such that for any closed subset $\YY \subset \partial F_0$,
$d_{F_0} ( \whull(\YY),o) \geq n$ implies that  $d_{X} (\FF \whull (\YY),o) \geq M(n)$. 

 As before, let {$\YY_k \subset \partial F_k$ } denote the flowed image of $\YY=\YY_0$  in $ \partial F_k$, and let $\BB_k$ denote its weak hull in $F_k$.
Then there exists a constant $K_0$ depending only $K, \ep, \delta$ such that the Hausdorff distance between  $\BB_k$ and $\BB_{k+1}$ is at most $K_0$. {Let  $f:\natls \to \natls$ be such that the inclusion $F_0\to X$ } is proper as measured by $f$. Define $g:\natls \to \natls$ by $g(n)=\max f^{-1}([0,n-1])$. Observe that  if $d_{F_0}(x,y)\geq n$ for some $x,y\in F_0$ and $n\in \natls$, then $d_X(x,y)\geq g(n)$.  Hence if $d_{F_0}(\BB_0,o)\geq n$, it follows that $d_X(\BB_k,o)\geq g(n)-kK_0$. Since $d_X(\BB_k, o) \geq k$, 
 \[{g(n)\leq d_X(\BB_k,o)+kK_0\leq (K_0+1)d_X(\BB_k,o)}\] whenever $d_{F_0}(\BB_0,o)\geq n$.  Choosing $M(n) = 
\frac{g(n)}{K_0+1}$, we are done.

Next, note that $\FF \whull (W_n)$ is $C-$quasiconvex for some (uniform) $C \geq 0$ by Theorem~\ref{thm-coarselipretract}. 

Recall that
$\hat{i}$ denotes the Cannon-Thurston map (Theorem~\ref{thm-ct})
in this context. It follows that any accumulation point of $\FF \whull (W_n)$ in $\partial \FF\CC_0$ must equal $\hat{i}(z) = w$. Since 
$ \LL(z_{1n},z_{2n}) \subset \FF \whull (W_n)$, the conclusion follows. 
\end{proof}

\begin{cor}\label{cor-bdy1pt}
We continue with the hypotheses of Lemma~\ref{lem-flowhoroball}.
Then $\partial \FF \CC_0$ is
also a singleton.
\end{cor}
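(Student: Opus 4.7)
The plan is to use the decomposition of $\FF\CC_0$ as a union of ladders from \cref{lem-union}, combined with the two convergence statements in \cref{lem-lthoro1,lem-lthoro2}, to show that every boundary point of $\FF\CC_0$ must equal $w$.

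First I would observe that $\partial\FF\CC_0$ is non-empty: by \cref{cor:flowedisMB}, $\FF\CC_0$ is a hyperbolic metric graph bundle over $[0,\infty)$, and is in particular unbounded since $p:\FF\CC_0\to[0,\infty)$ is surjective. Fix a basepoint $o\in\CC_0\subseteq\FF\CC_0$ and let $\eta\in\partial\FF\CC_0$ be arbitrary, represented by a sequence of vertices $x_n\in\VV(\FF\CC_0)$ with $d(x_n,o)\to\infty$. By \cref{lem-union}, for each $n$ I can choose $z_{1n}\neq z_{2n}\in\ZZ$ with $x_n\in\LL(z_{1n},z_{2n})$. Since $\ZZ\subseteq\partial F_0$ is closed and $\partial F_0$ is compact (as $F_0$ is proper hyperbolic), after passing to a subsequence I may assume $z_{1n}\to z_1$ and $z_{2n}\to z_2$ in $\partial F_0$.

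A dichotomy now finishes the proof. If $z_1\neq z_2$, then \cref{lem-lthoro1} forces every accumulation point of $\{x_n\}$ lying in $\Lambda_X(\FF\CC_0)$ to equal $w$; under the topological embedding $\partial\FF\CC_0\hookrightarrow\partial X$ induced by the quasi-isometric embedding $\FF\CC_0\hookrightarrow X$ of \cref{thm-coarselipretract}, this translates to $\eta=w$. If instead $z_1=z_2$, the same conclusion follows directly from \cref{lem-lthoro2}, whose statement is already phrased in $\partial\FF\CC_0$. Either way $\eta=w$, and since $\eta$ was arbitrary we conclude $\partial\FF\CC_0=\{w\}$.

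The main (and only genuinely delicate) point to watch in executing this plan is the identification between $\partial\FF\CC_0$ and $\Lambda_X(\FF\CC_0)\subseteq \partial X$ needed to invoke \cref{lem-lthoro1}; this is a standard consequence of the fact that a quasi-isometric embedding between proper hyperbolic spaces induces a topological embedding of Gromov boundaries. Beyond this routine identification, the argument is essentially a clean case split, with \cref{lem-union,lem-lthoro1,lem-lthoro2} doing all of the real work.
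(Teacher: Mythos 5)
Your argument is correct and follows essentially the same route as the paper: decompose $\FF\CC_0$ into ladders via \cref{lem-union}, then use \cref{lem-lthoro1,lem-lthoro2} (with \cref{lem-flowhoroball} implicitly covering the constant-ladder case) to show every accumulation point in $\partial\FF\CC_0$ equals $w$. Your version merely makes explicit the sequence extraction, compactness of $\partial F_0$, and the identification $\partial\FF\CC_0\hookrightarrow\partial X$ that the paper's brief proof leaves implicit.
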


\begin{proof}
 Let $\ZZ=\partial \CC_0$,
and
 $\hat{i}(\ZZ)=w$. By Lemma~\ref{lem-union}, 
 $\FF \CC_0 = \bigcup_{z_1\neq z_2 \in \ZZ} \LL(z_1,z_2).$
By Lemma~\ref{lem-flowhoroball}, $\partial \LL(z_1,z_2) = w$ for all
$z_1\neq z_2$.
By Lemmas~\ref{lem-lthoro1} and \ref{lem-lthoro2}, any accumulation point of 
the collection $ \{\LL(z_1,z_2)\}$ contained in $\partial  F \CC_0$ also equals $w$. The corollary follows.
\end{proof}

\begin{rmk}\label{rmk-ctreg}
    A caveat is in order.  In general, if $p: X \to [0, \infty)$ is a
    hyperbolic metric graph bundle with uniformly, but not necessarily controlled,  hyperbolic fibers,
    the Cannon-Thurston map $\hat{i}: \partial F_0 \to \partial X$
 is not necessarily surjective. Let 
$ X$ be a half-space in the hyperbolic plane $\Hyp^2$, bounded by a 
bi-infinite geodesic $l$. Define $p: X \to [0,\infty)$
by setting $p(x)=d(x, l)$.
Then $p^{-1} (t)$
is the  set of points at distance $t $
 from $l$. Clearly, $p^{-1} (t)$ equipped with  its
intrinsic metric is isometric to the real line $\R$. Then 
 $p: X \to [0, \infty)$ can be easily discretized to give a
    hyperbolic metric graph bundle with {uniformly } hyperbolic fibers.

However, if $i: l(=F_0) \to X$ denotes the inclusion then the
Cannon-Thurston map $\hat{i}$
 is clearly not surjective in this case (in fact the 2 end-points of $l$ embed
into the semi-circular boundary of $X$). 
\end{rmk}

What Corollary~\ref{cor-bdy1pt}
shows is that unlike the situation in Remark~\ref{rmk-ctreg},
if the image of $\hat{i}$ is a singleton, then 
$\hat{i}$ is indeed a surjective map onto $\partial X$. In \cite[Proposition 2.6.1]{bowditch-stacks}, Bowditch shows that $\hat{i}$ is surjective if
the fibers
of  $p: X \to [0, \infty)$  are uniformly quasi-isometric to $\Hyp^2$.
We do not know how general a phenomenon this is.

\begin{qn}
Let $p: X \to [0, \infty)$ be a
    hyperbolic metric graph bundle with controlled hyperbolic fibers, such
    that fibers are uniformly quasi-isometric to a fixed {\bf one-ended}
    hyperbolic space $Y$. Is the Cannon-Thurston map $\hat{i}$ surjective?
\end{qn}

Remark~\ref{rmk-ctreg} shows that `controlled hyperbolic fibers'
cannot be replaced with `uniformly hyperbolic fibers' in the hypothesis.
\subsection{Growth of parabolic subsets}

\begin{lemma}\label{lem:polynomial growth of horospheres}
Let $p:X\to [0,\infty)$ be a hyperbolic metric graph bundle with controlled hyperbolic fibers and bounded valence. Let $\hat i:\widehat F_0 \to \widehat X$ be the Cannon-Thurston map.
If $\ZZ\subseteq \partial F_0$ is such that $\hat{i}(\ZZ)$ is a singleton, then $\CC_0=\whull(\ZZ)$ has polynomial growth with respect to the intrinsic metric $d_0$ of the fiber $F_0$.
\end{lemma}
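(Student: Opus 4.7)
The plan is to combine the fact that $\partial\FF\CC_0=\{w\}$ (Corollary~\ref{cor-bdy1pt}) with the flaring condition to prove that any two points $o,x\in\CC_0$ satisfy $d_X(o,x)=O(\log d_0(o,x))$, and then convert this into polynomial growth via the bounded valence hypothesis.

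Fix $o\in\CC_0$ as a basepoint and let $x\in\CC_0$ with $R:=d_0(o,x)$ large. By Corollary~\ref{cor:flowedisMB}, $\FF\CC_0\to[0,\infty)$ is a hyperbolic metric graph bundle with controlled hyperbolic fibers, so Lemma~\ref{lem-qiexists} provides $k$-quasi-isometric sections $\sigma_o,\sigma_x$ through $o$ and $x$ for some uniform $k$. By Corollary~\ref{cor-bdy1pt}, both $\sigma_o$ and $\sigma_x$ are $k$-quasi-geodesic rays in the hyperbolic graph $\FF\CC_0$ converging to the single boundary point $w$, hence they lie within bounded Hausdorff distance of one another in $\FF\CC_0$, and therefore in $X$ (using that $p$ is $1$-Lipschitz). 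By metric properness of $F_n\hookrightarrow X$, the sequence $a_n:=d_n(\sigma_o(n),\sigma_x(n))$ is uniformly bounded for all sufficiently large $n$.

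I next argue via Theorem~\ref{thm-flare-nec} that $a_n<M_k$ at some level $n^*=O(\log R)$. Setting $b_j:=a_{jn_k}$, the flaring inequality applied at $m=jn_k$ gives $\lambda_k b_j\le\max(b_{j-1},b_{j+1})$ whenever $b_j\ge M_k$ and $j\ge 1$. Since $\lambda_k>1$, this rules out any (strict or weak) local maximum of $(b_j)$ inside the region $\{b_j\ge M_k\}$. Together with the asymptotic boundedness from the previous paragraph, this forces $(b_j)$ to be strictly decreasing from $b_0=R$ while it remains above $M_k$; moreover, because $b_{j+1}<b_j<b_{j-1}$ then makes the maximum equal $b_{j-1}$, the flaring inequality further forces $b_j\le b_{j-1}/\lambda_k$ at each such step. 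Hence $b_{j^*}<M_k$ by $j^*\le\lceil\log_{\lambda_k}(R/M_k)\rceil$, giving $n^*:=j^*n_k=O(\log R)$ with $a_{n^*}<M_k$. Concatenating the subarc of $\sigma_o$ from $o$ to $\sigma_o(n^*)$, a shortest $X$-path from $\sigma_o(n^*)$ to $\sigma_x(n^*)$ of length at most $a_{n^*}<M_k$, and the subarc of $\sigma_x$ from $\sigma_x(n^*)$ to $x$ gives
\[
d_X(o,x)\le 2(kn^*+\epsilon)+M_k=O(\log R).
\]

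To conclude, bounded valence gives $|B_{d_X}(o,C\log R+C')|\le V^{C\log R+C'}=O(R^d)$ for some $d=C\log V$, and since $B_{d_0}(o,R)\cap\CC_0\subseteq B_{d_X}(o,O(\log R))$, we obtain $|B_{d_0}(o,R)\cap\CC_0|=O(R^d)$, as desired. The main technical hurdle is the sequence analysis: one must carefully verify that the log-convex flaring inequality together with the asymptotic boundedness really does force $(b_j)$ to decrease geometrically from the start, ruling out scenarios where $(b_j)$ dips below $M_k$ and later re-enters the region above $M_k$. Any such re-entry would trigger the flaring inequality and produce forced exponential growth of $(b_j)$, contradicting the asymptotic boundedness; this observation is what allows the step-by-step flaring estimate to be chained into the uniform $O(\log R)$ bound on $d_X$.
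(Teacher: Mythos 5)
Your strategy is the paper's own: uniform quasi-isometric sections through points of $\CC_0$ that stay in $\FF\CC_0$, asymptoticity forced by $\partial\FF\CC_0=\{w\}$, flaring to get exponential contraction of fiber distances, hence $d_X(o,x)=O(\log d_0(o,x))$, and bounded valence to convert this into polynomial growth. The one genuine problem is your very first step. \cref{cor:flowedisMB} asserts only that $p\colon\FF\CC_0\to[0,\infty)$ is a hyperbolic metric graph bundle; it does \emph{not} assert controlled hyperbolic fibers, and this is false in general: the fibers $\CC_n=\whull(\ZZ_n)$ need not have coarsely surjective barycenter maps (for instance, if $\ZZ$ is a two-point set each $\CC_n$ is coarsely a line, $\partial^{(3)}\CC_n$ is empty, and the barycenter map cannot be coarsely surjective). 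Indeed, in the proof of \cref{intro-hypcommens} the paper has to argue separately, using finiteness of isometry types of fibers, that its flowed bundle has controlled fibers. So you cannot apply \cref{lem-qiexists} to the flowed bundle to produce $\sigma_o,\sigma_x$; and without sections lying in $\FF\CC_0$ you cannot invoke \cref{cor-bdy1pt} to make them asymptotic, since arbitrary qi-sections of $X$ through $o$ and $x$ may converge to different points of the dendrite $\partial X$. The repair is exactly the paper's device: apply \cref{lem-qiexists} to the ambient bundle $p\colon X\to[0,\infty)$ (which has controlled hyperbolic fibers by hypothesis) to get $k$-qi sections through $o$ and $x$, and compose with the coarse Lipschitz, fiber-preserving retraction $\Pi$ of \cref{thm-coarselipretract}. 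Then $\Pi\circ\sigma_o$, $\Pi\circ\sigma_x$ are uniform ($Ck$-)qi sections of $X$ lying in $\FF\CC_0$, and quasiconvexity of $\VV(\FF\CC_0)$ in $X$ together with $\partial\FF\CC_0=\{w\}$ makes them asymptotic, as you need.

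With that substitution the rest of your argument is correct and coincides in substance with the paper's proof: the paper likewise deduces from flaring that $d_n(\sigma_u(n),\sigma_v(n))\le \lambda^{-n}d_0(u,v)+M$, concludes that the $d_0$-ball of radius $r$ in $\CC_0$ lies in a $d$-ball of radius $k'\log_\lambda(r)+k'$, and finishes with the bounded-valence count. Your explicit analysis of $b_j=a_{jn_k}$ (no weak local maxima above $M_k$, forced geometric decay while above $M_k$, and the re-entry contradiction against eventual boundedness) is a legitimate, more detailed justification of the exponential-convergence assertion that the paper states without proof; just take the flaring constants of \cref{defn-flare} associated to the slightly larger constant of the projected sections, and note that the uniform bound on $a_n$ for large $n$ (threshold depending on $x$, bound not) is all the contradiction argument needs.
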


\begin{proof}
    By \cref{thm-flare-nec}, the bundle satisfies a flaring condition. By \cref{lem-qiexists}, there exist $k\geq 1$ such that through every point
$u \in F_0$, there exists a $k-$quasi-isometric section $\sigma_u$  of the base $[0,\infty)$. If $u\in \CC_0$ then, by replacing $\sigma_u$ by $\Pi\circ \sigma_u$ if necessary, we may assume that $\sigma_u$ stays in the flowed image $\FF\CC_0$. 
By Corollary~\ref{cor-bdy1pt},  $\partial \FF\CC_0= \hat i(\ZZ)$ is a singleton.
By \cref{thm-coarselipretract} $\FF\CC_0$ is quasiconvex in $X$. 
Hence  all the quasi-isometric sections $\sigma_u$
are asymptotic. 

Let $u_0$ be a base-point in $\CC_0$. 
Let $d$ be the metric on $X$, and $d_n$ be the intrinsic metric in $\CC_n$.
The asymptotic $k$-quasigeodesic rays $\sigma_u, \sigma_v$ converge exponentially due to the flaring condition: There exist constants $M\ge 0 ,\lambda>1$ such that for all $u,v\in \CC_0$ $$ d_n(\sigma_u(n),\sigma_v(n))\le \frac{1}{\lambda^n} d_0(u,v)+M.$$ 
Hence, there exists a constant $k'\geq 1$ such that  $d_0-$ball of radius $r$ about $u_0$ in $\CC_0$ 
is contained in the $d-$ball of radius $R(r)=k'\log_\lambda (r)+k' $ about $u_0$ in $\CC_0$. 

If the valence of vertices in $X$ is bounded by $D$, then the $d$-ball of radius $R$ about any point has at most a constant times $D^R$ vertices.
We deduce that the $d_0$-ball of radius $r$ around $u_0$ in $\CC_0$ has at most a constant times $D^{R(r)}$ vertices. The proof is complete since $D^{R(r)} = D^{k'\log_\lambda (r)+k'}$ is polynomially growing in $r$.
\end{proof}

\section{Exponential Growth of Separators}\label{sec-qcsep}

Recall that if $X\to [0,\infty)$ is a hyperbolic metric graph bundle with controlled hyperbolic fibers, then $\partial X$ is a dendrite by \cref{thm-bowditch}. {Assume henceforth that $F_0$ is quasi-isometric to  a one-ended hyperbolic group $H$ so that $\partial F_0$ is connected. } Let $\hat i:\hat F_0\to \hat X$ be a Cannon-Thurston map. If $\hat{i}(\partial F_0)$ is a singleton, then
 $\partial X$ must be a singleton by Corollary~\ref{cor-bdy1pt} and hence by Lemma~\ref{lem:polynomial growth of horospheres} $F_0$ has polynomial growth, a contradiction.  Hence 
  $\hat{i}(\partial F_0) \subset \partial X$ is a connected subset of a dendrite. Since  $\hat{i}(\partial F_0)$ is not a singleton, it must itself be a dendrite.	 Let $\xi$ be a point that   separates $\hat{i}(\partial F_0)$.
   Then $\ZZ=\hat{i}^{-1}(\xi)$ separates $\partial F_0$.
In this section we will study the Hausdorff dimension  of such a separator $\ZZ$, and the growth of its  weak convex hull $\CC_0$. We shall denote Hausdorff
dimension by $\Hdim$.

Recall that if $H$ is a hyperbolic group then $\partial H$ has a visual metric $\rho$: i.e. $\rho$ induces the topology of $\partial H$ and there exists $a>1,C\ge 0$ such that if $\gamma$ is a bi-infinite geodesic connecting the points $\gamma(\pm\infty)\in \partial H$ then $$ a^{-d(e,\gamma) - C}\le \rho(\gamma(-\infty),\gamma(\infty))\le a^{-d(e,\gamma) + C}.$$

It is known that if $H$ is a one-ended hyperbolic group, then $\partial H$ is path connected and locally path connected. 
If $H$ does not virtually split over 2-ended subgroups, then by
a theorem of Bowditch \cite[Theorem 6.2]{bowditch-cutpts}, $\partial H$ does not have local cut points (cf. \cref{sec-jsj}).
In \cite[Theorem 3.3]{kleiner-icm}, Kleiner showed how one can upgrade the local connectedness of $\partial H$ to \emph{linear local connectedness} or \emph{LLC} for short.
The following lemma,  does the same for local cut points. We 
postpone its proof to the end of the paper, as its proof is somewhat technical.

For the metric space $(\partial H,\rho)$ we will use $B(\xi,r)$ and $B[\xi,r]$ to denote the open and closed $\rho$-balls of radius $r$ around $\xi$ respectively. For a subset $A\subset \partial H$, we denote by $\cl(A),\;\intr(A)$ and $\bdr(A)$ its closure, interior and boundary respectively.

\begin{namedtheorem}[\cref{lem: uniform no local cut points}]
	Let $H$ be a one-ended hyperbolic group such that $\partial H$ has no local cut points. Let $\partial H$ be equipped with a visual metric $\rho$. There exists $0<\lambda<1$ and $r_0>0$ so that for every $\xi\in \partial H$  and $0<r<r_0$ the following are satisfied:
	\begin{enumerate}
		\item  any two points in $B(\xi,\lambda r)$ are connected by a path in $B[\xi,r]$, and 
		\item  any two points in $\bdr(B[\xi,r])$ which are connected by a path in $B[\xi,r]$ can be connected by a path in $B[\xi,r]-B(\xi,\lambda r)$.
	\end{enumerate}
\end{namedtheorem}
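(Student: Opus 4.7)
Part (1) is the linear local connectedness of $\partial H$, which for a one-ended hyperbolic group is Kleiner's \cite[Theorem 3.3]{kleiner-icm}; I would invoke it directly.

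For part (2), I would argue by contradiction using the self-similarity of $\partial H$ under the $H$-action. Suppose (2) fails. Then there exist sequences $\xi_n \in \partial H$, $r_n \to 0$, $\lambda_n \to 0$, and $x_n, y_n \in \bdr(B[\xi_n, r_n])$ connected by a path in $B[\xi_n, r_n]$ but not by any path avoiding $B(\xi_n, \lambda_n r_n)$. For each $n$, pick $h_n \in H$ within bounded Cayley-graph distance of $\gamma_n(t_n)^{-1}$, where $\gamma_n$ is a geodesic ray from the basepoint to $\xi_n$ and $t_n \simeq -\log_a r_n$. Standard visual-metric estimates then furnish constants $0 < c < C$, independent of $n$, with
\[
B(h_n \xi_n, c\mu) \;\subseteq\; h_n\bigl(B(\xi_n, \mu r_n)\bigr) \;\subseteq\; B(h_n \xi_n, C\mu) \quad \text{for all } \mu \in (0,1].
\]
Extracting a subsequence, $h_n \xi_n \to \xi_\infty$, $h_n x_n \to x_\infty$, $h_n y_n \to y_\infty$, with $x_\infty, y_\infty$ at distance at least $c/2$ from $\xi_\infty$, and $h_n(B[\xi_n, r_n])$ Hausdorff-converges to a continuum $\Omega$ with $B[\xi_\infty, c] \subseteq \Omega \subseteq B[\xi_\infty, C]$.

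The goal is to show $\xi_\infty$ is a local cut point of $\partial H$, contradicting the hypothesis. Any hypothetical path $\alpha$ in $\Omega$ from $x_\infty$ to $y_\infty$ avoiding $\xi_\infty$ could be approximated, using uniform LLC from part (1) together with local path-connectedness near the endpoints, by paths from $h_n x_n$ to $h_n y_n$ in $h_n(B[\xi_n, r_n])$ avoiding $B(h_n \xi_n, \delta)$ for a uniform $\delta > 0$; pulling these back by $h_n^{-1}$ would produce paths from $x_n$ to $y_n$ in $B[\xi_n, r_n]$ avoiding $B(\xi_n, (\delta/C) r_n)$, contradicting the failure of (2) once $\lambda_n < \delta/C$. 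Hence no such $\alpha$ exists, and $x_\infty, y_\infty$ are path-separated by $\xi_\infty$ in $\Omega$. Since $\Omega$ is a continuum containing the open ball $B(\xi_\infty, c)$, continuum-theoretic arguments show that $\xi_\infty$ is a limit in $\partial H$ of points from both path-components of $x_\infty$ and $y_\infty$ in $\Omega \setminus \{\xi_\infty\}$. Taking a connected open neighborhood $U$ of $\xi_\infty$ with $U \subseteq B(\xi_\infty, c) \subseteq \Omega$ (which exists by local connectedness of $\partial H$), paths in $U$ are paths in $\Omega$, so $U \setminus \{\xi_\infty\}$ is disconnected, making $\xi_\infty$ a local cut point.

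The main obstacle is the path-versus-continuum interplay under Hausdorff limits, together with the need to transfer a separation statement inside the limit set $\Omega$ into a local cut-point statement in $\partial H$. This is managed by arranging, through the choice of rescaling $h_n$, that $\Omega$ contains an open ball $B(\xi_\infty, c)$ of $\partial H$, inside which separations in $\Omega$ descend to separations in $\partial H$-neighborhoods of $\xi_\infty$; uniform LLC from part (1) is then used to upgrade continua to paths and to propagate approximations between the rescaled pictures.
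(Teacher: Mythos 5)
Your rescaling strategy is genuinely different from the paper's (which never takes Hausdorff limits of boundary subsets: it encodes the desired conclusion as a property of pairs of open sets that is monotone under shrinking, transfers it to pairs of points of $H$ via shadows, and upgrades the pointwise statement to a uniform one by a compactness argument on pairs of points in the Cayley graph). But as written your argument has a gap at its crucial step, and the gap is of a circular nature. To contradict the failure of (2) at stage $n$ you must produce a path from $x_n$ to $y_n$ lying \emph{inside} the closed ball $B[\xi_n,r_n]$ and avoiding $B(\xi_n,\lambda_n r_n)$; equivalently, a path inside $A_n=h_n(B[\xi_n,r_n])$ avoiding $B(h_n\xi_n,\delta)$. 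Your approximation of the limit path $\alpha\subseteq\Omega$ uses LLC of $\partial H$, but LLC only supplies connecting paths in $\partial H$, with no reason for them to lie in the closed set $A_n$. The inclusion $B(\xi_\infty,c)\subseteq\Omega$ only helps near $\xi_\infty$; the endpoints $x_\infty,y_\infty$ sit at distance at least $c$ from $\xi_\infty$, so $\alpha$ travels through the outer region of $\Omega$, where $\Omega$ and the $A_n$ may have empty interior and small detours leave $A_n$ (a path exiting $B[\xi_n,r_n]$ even slightly gives no contradiction, since the failure of (2) is a statement about paths inside $B[\xi_n,r_n]$ only). What one would need is a uniform relative statement of the form ``two nearby points of $B[\xi,r]$ away from $\xi$ can be joined inside $B[\xi,r]$ away from $\xi$'' --- which is essentially statement (2) itself. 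The paper's monotonicity device (its Lemma on upward-closedness of the class $\calA'$) exists precisely to avoid this: shrinking the sets only improves the property, so no limit path ever has to be pushed back into a prescribed closed set.

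The complementary branch, where no such $\alpha$ exists and you conclude that $\xi_\infty$ is a local cut point, is also incomplete. Path-separation of the two distant points $x_\infty,y_\infty$ in $\Omega\setminus\{\xi_\infty\}$ does not by itself disconnect a punctured neighborhood $U\setminus\{\xi_\infty\}$ with $U\subseteq B(\xi_\infty,c)$: you need points of $U\setminus\{\xi_\infty\}$ lying in the path component of $x_\infty$ and points lying in that of $y_\infty$. The natural source of such points is the rescaled paths $h_n\beta_n$ (which must enter $B(h_n\xi_n,C\lambda_n)$), but their subsequential limits are continua joining $x_\infty$, respectively $y_\infty$, to $\xi_\infty$ that may be connected only \emph{through} $\xi_\infty$, so their points near $\xi_\infty$ need not lie in the relevant path components of $\Omega\setminus\{\xi_\infty\}$; the cited ``continuum-theoretic arguments'' do not supply this (note also that closed balls $B[\xi_n,r_n]$, hence $\Omega$, need not even be connected). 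The bi-Lipschitz-at-scale estimate for the action of $h_n$ and the reduction of part (1) to Kleiner's theorem are fine, but until the two issues above are resolved --- most likely by first proving a uniform annular connectivity statement, which is the heart of the lemma --- the proposal does not close.
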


We are now ready to state and prove the main proposition of this section:

\begin{prop}\label{prop: Hdim of qc separator}
	Let $H$ be a one-ended hyperbolic group such that $\partial H$ has no local cut points, and let $\rho$ be a visual metric on $\partial H$. Then there exists $\epsilon = \epsilon (\partial H,\rho)>0$ such that if $\ZZ\subseteq \partial H$ separates $\partial H$ then $\Hdim(\ZZ,\rho)>\epsilon.$ 
\end{prop}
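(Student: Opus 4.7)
The plan is to construct a Cantor-like subset of $\ZZ$ whose Hausdorff dimension is bounded below by a constant depending only on $(\partial H, \rho)$, using \cref{lem: uniform no local cut points}.

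First, I would reduce to an easier target. Since $H$ is one-ended, $\partial H$ is connected, so the separator $\ZZ$ partitions $\partial H \setminus \ZZ$ into non-empty disjoint open sets which we group into two open sets $U$ and $V$. Setting $F := \cl(U) \cap \cl(V) \subseteq \ZZ$, connectedness of $\partial H$ forces $F \neq \emptyset$, and it suffices to prove $\Hdim(F, \rho) \geq \epsilon$.

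The heart of the argument is a bifurcation claim: there exist $\kappa \in (0, 1)$ and $r_1 > 0$ depending only on $(\partial H, \rho)$ such that for every $\xi \in F$ and every $r \in (0, r_1]$, there are two points $\eta^+, \eta^- \in F \cap B[\xi, r]$ with $\rho(\eta^+, \eta^-) \geq \kappa r$. To prove this I would argue by contradiction: if $F \cap B(\xi, r)$ is clustered in a sub-ball $B(z, \kappa r)$ with $\kappa$ very small, then the annular shell $A := B(\xi, r) \setminus B[z, \kappa r]$ is free of $F$, so $A \subseteq U \cup V$. By \cref{lem: uniform no local cut points}(1), I can pick $u \in U$ and $v \in V$ close to $\xi$ connected by a path inside $B[\xi, \lambda r]$ that necessarily crosses $F$. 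By \cref{lem: uniform no local cut points}(2), any two boundary points of $B[\xi, r]$ that are connected within the ball may be rerouted through $B[\xi, r] \setminus B(\xi, \lambda r)$; when $\kappa$ is chosen much smaller than $\lambda$, this detour also avoids the $F$-cluster and therefore lies in a single component ($U$ or $V$) of $\partial H \setminus \ZZ$. Comparing the detour with the original path through $F$, I conclude that one of $U, V$ must be trapped inside $B[z, \kappa r]$. Iterating this analysis at successively smaller scales around $\xi$ forces the trapped side to have arbitrarily small diameter, contradicting that both $U$ and $V$ are non-empty open subsets of $\partial H$.

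Granted the bifurcation claim, I iteratively construct a Cantor-like subset of $F$: starting from $\xi_0 \in F$, the claim produces two $F$-points at distance at least $\kappa r_0$ inside $B[\xi_0, r_0]$; the disjoint balls of radius $\kappa r_0 / 3$ centered at these two points sit inside $B(\xi_0, r_0)$, and the construction repeats inside each. After $n$ steps one obtains $2^n$ pairwise disjoint balls of radius $(\kappa/3)^n r_0$, all centered on $F$, so the limiting Cantor set satisfies $\Hdim(F, \rho) \geq \log 2 / \log(3/\kappa) > 0$, and one may take $\epsilon$ to be any number strictly smaller than this.

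The main obstacle is the bifurcation claim. The rerouting argument requires careful quantification to guarantee that the detour from \cref{lem: uniform no local cut points}(2) truly avoids the cluster (fixing a constraint on $\kappa$ in terms of $\lambda$), and the contradiction requires a uniform mechanism for iterating the ``trapped side'' conclusion at all scales. Handling the case where the cluster center $z$ sits near the outer boundary of $B(\xi, r)$, rather than close to $\xi$, will likely require choosing base scales carefully so that \cref{lem: uniform no local cut points} can still be applied concentrically.
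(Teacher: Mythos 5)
Your overall architecture (a uniform statement at every point and scale of the separator, fed into a Cantor-type construction that forces positive Hausdorff dimension) is the same as the paper's, but there is a genuine gap at your reduction step, and it propagates into the bifurcation claim. The set $F=\cl(U)\cap\cl(V)$ need not be nonempty: connectedness of $\partial H$ only forces a path from $U$ to $V$ to meet $\ZZ$, not to meet $\cl(U)\cap\cl(V)$ (the first exit point from $U$ lies in $\cl(U)\cap\ZZ$, but need not lie in $\cl(V)$; if $\ZZ$ is ``thick'', e.g.\ has nonempty interior between $U$ and $V$, then $F=\emptyset$). For the same reason, inside your proof of the bifurcation claim the inference ``the shell $A$ is free of $F$, so $A\subseteq U\cup V$'' is false: $A$ may contain points of $\ZZ\setminus F$, and a path avoiding $F$ can cross from $U$ to $V$ through such points. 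So nothing traps one of $U,V$ in the small ball, and the contradiction is not reached.

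The missing idea is the paper's first step: fix $\zeta_1,\zeta_2$ in different components of $\partial H-\ZZ$ and use Zorn's lemma to pass to a \emph{minimal} closed subset $\ZZ'\subseteq\ZZ$ separating them. Minimality supplies exactly the property your $F$ was meant to encode, but for the whole separator: $\cl([\zeta_1])\supseteq \ZZ'$ and $\cl([\zeta_2])\supseteq \ZZ'$, where $[\zeta_i]$ is the path component of $\partial H-\ZZ'$ containing $\zeta_i$. With this, for every $\xi\in\ZZ'$ and every small $r$ one connects $\zeta_1$ to $\zeta_2$ by a path contained in $B[\xi,r]\cup(\partial H-\ZZ')$, reroutes the inner subsegments through the annulus using part (2) of \cref{lem: uniform no local cut points}, and concludes that $\ZZ'\cap\bigl(B[\xi,r]-B(\xi,\lambda r)\bigr)\neq\emptyset$; the crossing point is automatically in $\ZZ'$ because the rerouted path avoids $\ZZ'$ outside the annulus, so no ``trapped side'' iteration is needed. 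Once this annulus statement is available for the minimal separator, your Cantor construction and dimension bound (via a mass distribution argument, or, as in the paper, an explicit bi-Lipschitz embedding of $(\bfC,d_{\bfC})$ into $\ZZ'$) go through essentially as you wrote them.
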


\begin{proof}
    Denote $M=\partial H$ with the visual metric $\rho$.
	Assume that the points $\zeta_1,\zeta_2\in M - \ZZ$ belong to different components of $M-\ZZ$.
	\begin{claim}\label{claim-min}
		There exists a minimal closed subset (with respect to inclusion) $\ZZ'$ of $\ZZ$ that separates $\zeta_1,\zeta_2$.

		In particular, if $i\in \{1,2\}$ and  $[\zeta_i]$ denotes the path component of  $M-\ZZ'$ containing $\zeta_i$, then $\cl([\zeta_i]) \supseteq \ZZ'$.
	\end{claim}
	\begin{proof}
		By Zorn's Lemma it suffices to show that if $\{\ZZ_i\}_i$ is a totally ordered subset of the set of all closed subsets of $M$ that separate $\zeta_1,\zeta_2$, then so is $\ZZ_\infty=\bigcap_i \ZZ_i$. 
		Assume that $\zeta_1,\zeta_2$ are not separated by $\ZZ_\infty$ then (since $M$ is locally path connected, and $M - \ZZ_\infty$ is open) there exists a path $\gamma$ connecting $\zeta_1,\zeta_2$ in $M-\ZZ_\infty$. Since $\ZZ_i$ separates $\zeta_1,\zeta_2$ we must have $\gamma\cap \ZZ_i \ne \emptyset$ for all $i$. By the finite intersection property, $\gamma \cap \ZZ_\infty = \bigcap_i (\gamma \cap \ZZ_i) \ne \emptyset$ which gives a contradiction. This proves that a minimal separating subset exists.

		If $\ZZ'$ is a minimal separating subset then $\cl([\zeta_1])\supseteq \ZZ'$ as otherwise the subset $\ZZ''=\cl([\zeta_1])\cap \ZZ' \subsetneq \ZZ'$ will be a smaller separating set.
	\end{proof}

	From now on we will assume that $\ZZ$ is a closed minimal
		set as in Claim~\ref{claim-min}.
    Let $r_0$ be small enough so that $r_0 \le  \rho(\ZZ,\{\zeta_1,\zeta_2\})$  and for all $r<r_0$ \cref{lem: uniform no local cut points} holds. 
    Let $\lambda\in (0,1)$ be the constant from \cref{lem: uniform no local cut points}. 

	\begin{claim}\label{claim: non empty annulus}
		For all $\xi\in \ZZ$ and $0<r<r_0$ we have $\ZZ\cap (B[\xi,r]-B(\xi,\lambda r) ) \ne \emptyset$. 
	\end{claim}

	\begin{proof}
		For every $\xi\in \ZZ$, since $\ZZ$ is minimal we know that $\cl([\zeta_1]) \cap \ZZ = \cl([\zeta_2])\cap \ZZ = \ZZ$. 
		 We continue using the notation of  Claim~\ref{claim-min}.
		 	By minimality of $\ZZ$, $[\zeta_i]$ is the path component of 
		 	$(M - \ZZ)$ containing $\zeta_i$.
		So there are paths $\gamma_1, \gamma_2$ in $M-\ZZ$ connecting $\zeta_1,\zeta_2$ respectively to points in $B(\xi,\lambda r)$. By connecting the endpoints of $\gamma_1$ and $\gamma_2$ in $B[\xi,r]$ we can find a path $\gamma$ connecting $\zeta_1,\zeta_2$ so that $\gamma$ is in $B[\xi,r] \cup (M-\ZZ)$. 
		By compactness of $\gamma$, there are finitely many maximal subsegments of $\gamma$ that are contained in $B[\xi,r]$ and meet $B(\xi,\lambda r)$. By \cref{lem: uniform no local cut points} one can replace each one of them to obtain a path $\gamma'$ in $(B[\xi,r]-B(\xi,\lambda r)) \cup (M-\ZZ)$ connecting $\zeta_1,\zeta_2$. Such a path must intersect $\ZZ$, and so $\ZZ\cap (B[\xi,r]-B(\xi,\lambda r) ) \ne \emptyset$. 
	\end{proof}

	Set $0<\tau<1$ to be such that $K'=\tau\big(\lambda - 2\frac{\tau}{1-\tau}\big)>0$.
	Endow the Cantor set $\bfC=\{0,1\}^\bbN$ with the metric 
	$$d_{\bfC}((x_i),(y_i))=\inf\set{  \tau^n \;\middle|\; x_i=y_i \;\forall i\le n}.$$
    A simple computation gives $\Hdim(\bfC,d_\bfC)=-\log(2)/\log(\tau)$.

    To prove the proposition, we will construct a bi-Lipschitz embedding $f:(\bfC,d_{\bfC}) \to (\ZZ,\rho)$.
	It will follow that $\Hdim(\ZZ,\rho)\ge  \Hdim(\bfC,d_{\bfC})=-\frac{\log(2)}{\log(\tau)}$.

	First let us define $f$ for the set $\bfC_0$ of sequences which are eventually $0$, i.e. $\bfC_0 =\bigcup_N \{0,1\}^N\times \{0\}^\bbN \subset \bfC$. We do so by induction on $N$:
    For $N=0$, set $f((0)_n)=\xi_0$ for some arbitrary $\xi_0\in \ZZ$. For $N>0$ let $x=(x_n)$ be in $\{0,1\}^N\times \{0\}^\bbN$ and assume that $x_N=1$ (otherwise $f((x_n))$ was already defined by induction). Let $y_n$ be the sequence defined by $y_n = x_n$ if $n< N$ and $y_N=0$. Then, by induction $f((y_n))=\xi\in \ZZ$ is defined. By \cref{claim: non empty annulus} there exists some point $\xi'\in \ZZ \cap (B[\xi,r_N]-B(\xi,\lambda r_N))$ where $r_N=\tau^N r_0$, set $f((x_n))=\xi'$.

	Let us show that $f:\bfC_0\to \ZZ$ is $K$-bi-Lipschitz. If $x,y\in \bfC_0$ satisfy $d_\bfC(x,y)=\tau^{N}$ then $x$ and $y$ agree up to their $N$-th coordinate with a sequence $z\in \{0,1\}^N \times \{0\}^\bbN$. Recall that to construct $f(x)$ one starts with $f(z)$ and for every a change in the $i$-th coordinate moves to a point at distance at most $r_i$ from the previous one. By the triangle inequality we get $$ \rho(f(x),f(z))\le \sum _{i>N} r_i =\sum_{i> N}\tau^{i} r_0 = \frac{\tau r_0}{1-\tau}\tau^{N}.$$ The same holds for $f(y),f(z)$, and so by the triangle inequality
	$$\rho(f(x),f(y)) \le  2\frac{\tau r_0}{1-\tau}\tau^{N} =K'' \cdot d_\bfC(x,y)$$
	where $K''=2\frac{\tau r_0}{1-\tau}$.

 Similarly, 
	$$\rho(f(x),f(y)) \ge \lambda \tau^{N+1} r_0 - 2\sum_{i> N+1} \tau^i r_0 \ge \lambda r_0 \tau^{N+1} - 2 \frac{\tau r_0}{1-\tau} \tau ^{N+1} = K' \cdot d_\bfC(x,y),$$ 
 where $K'=\tau\big(\lambda r_0 -2\frac{\tau r_0}{1-\tau}\big)>0$. So $f$ is $K$-bi-Lipschitz for $K=\max\{K'',\frac{1}{K'}\}$.

	In particular, since $\ZZ$ is complete, we can extend $f:\overline{\bfC_0} = \bfC \to \ZZ$. This map remains $K$-bi-Lipschitz.
\end{proof}

\begin{lemma}\label{pos hdim implies exponential growth}
    If $H$ is a hyperbolic group and  $\ZZ \subseteq \partial H$ is a
    closed subset of positive Hausdorff dimension, then $\CC_0=\whull(\ZZ)$ is not polynomially growing.
\end{lemma}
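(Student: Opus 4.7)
The strategy is to produce, for a sequence of radii $R_n \to \infty$, at least exponentially many vertices of $\CC_0 = \whull(\ZZ)$ at distance $R_n$ from a fixed basepoint $o \in H$; this contradicts polynomial growth. The count comes from a shadow argument: vertices at distance $R$ from $o$ correspond to shadows in $\partial H$ of visual diameter $\lesssim a^{-R}$, where $a>1$ is the parameter of the visual metric, and covering the positive-dimensional set $\ZZ$ by such shadows requires exponentially many in $R$.

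First I fix a basepoint $o \in H$ and a visual metric $\rho$ on $\partial H$ with parameter $a>1$, and choose $\epsilon \in (0, \Hdim(\ZZ))$, so that the $\epsilon$-dimensional Hausdorff measure $\mathcal{H}^\epsilon(\ZZ)$ is infinite. I may assume $\ZZ$ is perfect: by the Cantor--Bendixson theorem $\ZZ$ decomposes as a disjoint union of its perfect kernel $\ZZ^\ast$ and an at most countable set of Hausdorff dimension zero, so $\Hdim(\ZZ^\ast) = \Hdim(\ZZ)$; and $\whull(\ZZ^\ast) \subseteq \whull(\ZZ)$, so non-polynomial growth of the former yields the same for the latter.

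Next, for each $R > 0$ and $\xi \in \ZZ$, let $p_R(\xi)$ be a vertex on some geodesic ray $[o,\xi)$ at distance $R$ from $o$. Since $\ZZ$ is perfect, I choose $\eta_n \in \ZZ \setminus \{\xi\}$ with $\eta_n \to \xi$, so that the Gromov products $(\xi, \eta_n)_o \to \infty$. For $n$ large enough the bi-infinite geodesic from $\xi$ to $\eta_n$ passes within $2\delta$ of $p_R(\xi)$ by thin triangles, witnessing $p_R(\xi) \in \CC_0$. Moreover, if $\xi, \xi' \in \ZZ$ satisfy $p_R(\xi) = p_R(\xi')$, then the two rays from $o$ share a common vertex at distance $R$, so $(\xi, \xi')_o \ge R - O(\delta)$, and hence $\rho(\xi, \xi') \le C a^{-R}$ for a uniform constant $C$ depending only on the hyperbolicity and visual-metric constants.

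The fibers of $p_R|_\ZZ$ therefore form a cover of $\ZZ$ by $N_R := |p_R(\ZZ)|$ sets of $\rho$-diameter at most $Ca^{-R}$. Since $\mathcal{H}^\epsilon(\ZZ) = \infty$ and $\mathcal{H}^\epsilon_\delta(\ZZ)$ increases to $\mathcal{H}^\epsilon(\ZZ)$ as $\delta \to 0$, for all sufficiently large $R$ every such cover satisfies $N_R (Ca^{-R})^\epsilon \ge 1$, giving $N_R \ge C^{-\epsilon} a^{\epsilon R}$. As $N_R$ is at most the number of vertices of $\CC_0$ at distance $R$ from $o$, the ball of radius $R$ about $o$ in $\CC_0$ contains $\gtrsim a^{\epsilon R}$ vertices, incompatible with polynomial growth. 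The main technical points are the shadow-diameter bound $(\xi, \xi')_o \ge R - O(\delta)$, a standard consequence of $\delta$-thin triangles, and the covering lower bound from positive Hausdorff dimension; both are routine once the shadow framework is set up, and no further obstacle is anticipated.
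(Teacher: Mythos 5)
Your overall strategy is the same as the paper's: index points of $\CC_0$ at distance $R$ from a fixed basepoint, observe that the corresponding subsets of $\ZZ$ (your fibers of $p_R$, the paper's shadows $\Sh(e,x)$ for $x\in\CC_0\cap S(R)$) have visual diameter at most $Ca^{-R}$, and use $\HH^\epsilon(\ZZ)=\infty$ for $0<\epsilon<\Hdim(\ZZ)$ to force at least a constant times $a^{\epsilon R}$ such points. The diameter bound on the fibers, the Hausdorff-measure counting bound, and the conclusion are all correct and essentially identical to the paper's argument.

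However, the step placing $p_R(\xi)$ inside $\CC_0$ is argued in the wrong direction, and as written it fails. You choose $\eta_n\in\ZZ\setminus\{\xi\}$ with $\eta_n\to\xi$, so that $(\xi,\eta_n)_o\to\infty$, and claim that for $n$ large the geodesic $(\xi,\eta_n)$ passes within $2\delta$ of $p_R(\xi)$. But the distance from $o$ to the bi-infinite geodesic $(\xi,\eta_n)$ equals $(\xi,\eta_n)_o$ up to an additive constant depending only on $\delta$, so once $(\xi,\eta_n)_o$ exceeds $R$ by a fixed multiple of $\delta$ this geodesic misses the ball about $o$ containing $p_R(\xi)$ entirely and cannot come $2\delta$-close to it. Thin ideal triangles give what you want in the opposite regime: $p_R(\xi)$ is uniformly close to $(\xi,\eta)$ precisely when $R\ge(\xi,\eta)_o+O(\delta)$, i.e.\ when $\eta$ is \emph{far} from $\xi$ at the scale $a^{-R}$, not arbitrarily close to it. The repair is immediate and also makes your Cantor--Bendixson reduction unnecessary: since $\Hdim(\ZZ)>0$, $\ZZ$ has at least two points, so $D_0:=\diam_\rho(\ZZ)>0$; for each $\xi\in\ZZ$ pick $\eta(\xi)\in\ZZ$ with $\rho(\xi,\eta(\xi))\ge D_0/2$. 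Then $(\xi,\eta(\xi))_o$ is bounded above by a constant independent of $\xi$, and for every $R$ beyond that constant the point $p_R(\xi)$ lies within $2\delta$ of the geodesic $(\xi,\eta(\xi))$, hence in $\whull(\ZZ)=\CC_0$, uniformly in $\xi$. This uniform statement is exactly what the paper invokes when it asserts that for $n$ large every ray from $e$ to a point of $\ZZ$ passes near $\CC_0\cap S(n)$. With this correction your proof goes through and coincides with the paper's.
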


\begin{proof}
    Consider the intersection $\CC_0\cap S(n)$ where $S(n)$ is the sphere of radius $n$ around $e$ in $H$.

    For every point $x\in H$, 
    consider the shadow $\Sh(e,x) \subseteq \partial H$ consisting of all the boundary points of geodesic rays starting at $e$ and passing through the ball of radius $4\delta$ around $x$ (see \cref{def: shadow}).
    By the $2\delta$ slimness of triangles (with ideal endpoints),  there exists $D=D(\delta)$ such that any two points in $\Sh(e,x)$ are connected by a geodesic whose distance from $e$ is at least $d(e,x)-D$.
    Since $\rho$ is a visual metric (with parameter $a$, and multiplicative constant $C$), this implies that $$\diam_\rho(\Sh(e,x))\le a^{-d(e,x)+D + C}.$$

    For large enough $n$, any geodesic ray $r$ that starts at $e$ and ends in $\xi\in \ZZ$, passes through the $4\delta$ neighborhood of some point $x\in \CC_0 \cap S(n)$. 
    It follows that the collection $\{\Sh(e,x)|x\in \CC_0 \cap S(n)\}$ covers $\ZZ$ for large enough $n$.
    For $0<\epsilon<\Hdim(\ZZ)$, consider the $\epsilon$-Hausdorff measure $\HH^\epsilon(\ZZ)$ of $\ZZ$. We have

    \begin{equation*}
    \infty=\HH^\epsilon(\ZZ) \le \liminf_{n\to\infty} \sum_{x\in \CC_0 \cap S(n)} \diam(\Sh(e,x))^\epsilon \le \liminf_{n\to\infty} |\CC_0\cap S(n)|\cdot a^{(-n+D +C)\epsilon}
    \end{equation*}

    And so $$\liminf_{n\to\infty} \frac{|\CC_0\cap S(n)|}{a^{n \epsilon }}=\infty $$
    It follows that $|\CC_0\cap S(n)|$ is not polynomially growing, and thus $\CC_0$ is not polynomially growing.
\end{proof}

In fact, it can be seen from the proof that $\CC_0$ has exponential growth. This however, will not be used in later proofs.

\section{Proof of Main Theorems}\label{sec:main theorems}

\begin{proof}[Proof of \cref{intro-main}]
Let $H$ be a one-ended hyperbolic group. Assume that $H$ is quasi-isometric to a fiber of a metric graph bundle $p:X\to B$ with controlled hyperbolic fibers, bounded valence and unbounded base. Then, the base $B$ has bounded valence, and so there exists an infinite geodesic ray in $B$. 
By \cref{lem:general B to ray} we may restrict the bundle to this ray to assume that $B=[0,\infty)$.
Recall that $\partial X$ is a dendrite by  \cref{thm-bowditch}.

By \cref{thm-ct}, there exists a Cannon-Thurston map $\hat i:\widehat F_0\to \widehat X$. 
Since $\partial H$ is connected, the image $\DD = \hat i(\widehat F_0)$ is a connected closed subset of $\partial X$, and so a dendrite.
If $\DD$ is a singleton, then by \cref{lem:polynomial growth of horospheres} $H$ is polynomially growing, in contradiction to it being a one-ended hyperbolic group. 
Otherwise, let $\xi \in \DD$ be a separating point in $\DD$.
Let $\ZZ=\hat i^{-1}(\xi)$, and let $\CC_0 = \whull(\ZZ)$.
By \cref{lem:polynomial growth of horospheres} $\CC_0$ is polynomially growing. 
Thus, by \cref{pos hdim implies exponential growth} $\ZZ$ cannot have positive Hausdorff dimension in $\partial F_0 =\partial H$. Since $\ZZ$ is a separating subset of $\partial H$, it follows by \cref{prop: Hdim of qc separator} that $\partial H$ has a local cut point. 
By 
a theorem of Bowditch \cite{bowditch-cutpts} (see \cref{thm: bowditch jsj} below), $H$ virtually splits over a 2-ended subgroup.
\end{proof}

\subsection{The end-decomposition and JSJ-decomposition.}\label{sec-jsj}
We provide here  a quick summary of the decomposition results we shall need about a hyperbolic group $H$, and their invariance under abstract commensurability.

\begin{defn}
An \emph{end-decomposition} for $H$ is a graph of groups decomposition of $H$ with  finite edge groups, and whose vertex groups are one-ended or finite.
\end{defn}

\begin{theorem}[{\cite{stallings1968torsion,dunwoody1985accessibility}}]\label{thm: end decomposition}
    Let $H$ be a finitely presented group. Then $H$ admits an end-decomposition. 
\end{theorem}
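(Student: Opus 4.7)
The plan is to combine Stallings' theorem on ends of groups with Dunwoody's accessibility theorem in an iterative splitting procedure. First I would set up the trivial case: if $H$ is itself finite or one-ended, then the graph of groups consisting of a single vertex with vertex group $H$ already satisfies the definition of an end-decomposition, so there is nothing to prove. Thus I may assume $H$ has at least two ends.

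The main inductive step uses Stallings' theorem: any finitely generated group with more than one end splits nontrivially as an amalgamated free product $A *_C B$ or an HNN extension $A*_C$, where $C$ is finite. Applied to $H$, this yields an initial graph of groups decomposition $\mathcal{G}_1$ with all edge groups finite, and with each vertex group a proper subgroup of $H$. I would then apply the same dichotomy to each vertex group of $\mathcal{G}_1$: any vertex group which is finite or one-ended is left unchanged, while any vertex group with more than one end is split further by Stallings' theorem. Replacing the vertex by the resulting splitting and identifying incident edges appropriately produces a refined graph of groups $\mathcal{G}_2$, still with all edge groups finite. Iterating, one obtains a sequence $\mathcal{G}_1, \mathcal{G}_2, \dots$ of graphs of groups over finite edge groups, with each $\mathcal{G}_{n+1}$ a refinement of $\mathcal{G}_n$.

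The main obstacle, and the only nontrivial point, is showing that this iterative splitting procedure terminates. A priori there is no reason for it to do so (and indeed there are finitely generated but not finitely presented examples where it fails, such as certain inaccessible groups constructed by Dunwoody). Here I would invoke Dunwoody's accessibility theorem \cite{dunwoody1985accessibility}, which states that for a finitely presented group there is a uniform bound (in terms of the presentation) on the number of edges of any reduced graph of groups decomposition with finite edge groups. Since the complexity of $\mathcal{G}_n$ strictly increases whenever a further splitting is performed, the procedure must terminate after finitely many steps.

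When the procedure terminates, we are left with a graph of groups decomposition of $H$ whose edge groups are all finite and whose vertex groups admit no further Stallings splitting over finite subgroups; by Stallings' theorem each such vertex group is either finite or one-ended. This is precisely an end-decomposition in the sense of the definition, completing the proof.
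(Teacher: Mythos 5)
Your argument is correct and is exactly the standard route the paper has in mind: it states this result without proof, citing Stallings' ends theorem and Dunwoody's accessibility, which is precisely the combination you spell out (split over finite subgroups while more than one end, terminate by the accessibility bound for finitely presented groups). Nothing further is needed.
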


We mention here two important facts: the vertex groups of the end-decomposition are quasiconvex (with respect to any finite generating set), and the collection of all of them is commensurably-invariant in the following sense:

\begin{defn}
Let $H$ be a group. An \emph{abstract commensuration} of $H$ is an isomorphism $\phi:H_1 \to H_2$ between two finite index subgroups $H_1,H_2$ of $H$.

A collection $\KK$ of subgroups of a group $H$ is \emph{commensurably invariant} if for every $K\in \KK$ and every abstract commensuration $\phi:H_1\to H_2$ the image $\phi(K\cap H_1)$ is commensurable to some $K'\in \KK$.
\end{defn}

\begin{lemma}
    The collection of conjugates of vertex groups of an end-decomposition of a finitely presented group is commensurably invariant.
\end{lemma}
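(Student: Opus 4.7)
The plan is to show that the one-ended vertex groups of $\TT$ admit an intrinsic characterization as the maximal one-ended subgroups of $H$, from which commensurable invariance follows essentially formally.

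First, I would establish the intrinsic characterization: the conjugacy classes of one-ended vertex groups of any end-decomposition $\TT$ of $H$ coincide with the maximal (under inclusion) one-ended subgroups of $H$. For this, let $T$ denote the Bass-Serre tree of $\TT$. By Stallings' theorem, a one-ended finitely generated subgroup cannot split nontrivially over a finite subgroup, and hence cannot act on a tree with finite edge stabilizers without a global fixed point; so any one-ended $W \le H$ fixes a vertex of $T$ and lies inside a vertex stabilizer. Since vertex stabilizers are finite or one-ended, any infinite such $W$ lies inside a one-ended vertex stabilizer $V$, and $V$ itself is maximal one-ended: were $V \subsetneq V'$ with $V'$ one-ended, then $V'$ would also fix a vertex of $T$, forcing $V$ to fix an edge and hence to lie in a finite edge stabilizer---a contradiction.

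Next, I would apply this characterization inside a finite index subgroup. Given $H_0 \le H$ of finite index, the action of $H_0$ on $T$ yields an induced end-decomposition of $H_0$ whose vertex stabilizers are the groups $H_0 \cap K$ for $K$ a vertex stabilizer of $\TT$. These intersections are finite index in $K$ (so one-ended if $K$ was one-ended, finite otherwise), and edge stabilizers remain finite. Applying the intrinsic characterization to $H_0$, its maximal one-ended subgroups are precisely the infinite intersections $H_0 \cap K$ for $K \in \KK$.

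Finally, given an abstract commensuration $\phi : H_1 \to H_2$ and $K \in \KK$: if $K$ is one-ended, then $K \cap H_1$ is a maximal one-ended subgroup of $H_1$, so $\phi(K \cap H_1)$ is maximal one-ended in $H_2$; by the characterization applied to $H_2$, it equals $H_2 \cap K'$ for some $K' \in \KK$, and since $H_2 \cap K'$ has finite index in $K'$ this gives the required commensurability. If $K$ is finite, the conclusion follows from the fact that any two finite subgroups are commensurable, together with the existence of a finite element of $\KK$ (automatic as soon as $K \in \KK$ is itself finite; the only alternative is the trivial case $\KK = \{H\}$ where $H$ is one-ended). The sole nontrivial input is the intrinsic characterization, which reduces directly to Stallings' theorem combined with the $T$-action, so I anticipate no serious obstacle beyond careful bookkeeping.
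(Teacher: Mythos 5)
Your proof is correct, and it rests on the same key fact as the paper's: the conjugates of one-ended vertex groups are precisely the maximal one-ended subgroups. The execution differs in two ways worth noting. First, you actually prove this characterization (Stallings' theorem plus the Bass--Serre action forcing any one-ended subgroup to fix a vertex, with uniqueness of the fixed vertex giving maximality), whereas the paper merely asserts it with ``we note.'' Second, instead of the paper's back-and-forth argument --- $\phi(K\cap H_1)$ is one-ended hence contained in some $K'\in\KK$, then $\phi^{-1}(K'\cap H_2)$ lies in some $K''$, and the chain $K\cap H_1\le\phi^{-1}(K'\cap H_2)\le K''$ together with $[K:K\cap H_1]<\infty$ forces $K''=K$ --- you apply the characterization directly inside the finite-index subgroups $H_1,H_2$ via their induced action on the same tree, so that $\phi(K\cap H_1)$ is itself a maximal one-ended subgroup of $H_2$ and therefore equals $H_2\cap K'$ for some $K'\in\KK$. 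This buys a cleaner conclusion (and, as a by-product, independence of the end-decomposition) at the cost of the routine extra step about induced end-decompositions of finite-index subgroups. One cosmetic point in your maximality argument: if the one-ended $V'\supseteq V$ fixes the very vertex $v$ with $V=\Stab_H(v)$, the contradiction is simply $V'\le\Stab_H(v)=V$; the ``fixes an edge, hence finite'' argument is the case of a distinct fixed vertex. Your handling of finite vertex groups matches the paper's observation that any two finite subgroups are commensurable.
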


\begin{proof}
	Let $H$ be a finitely presented group and let $\KK$ be the collection of conjugates of one-ended vertex groups of end-decomposition of $H$. 
Since all finite subgroups of $H$ are commensurable,  it is enough to show  $\KK$ is commensurably invariant.  We note that $\KK$ consists precisely of the maximal one-ended subgroups of $H$.

Let $\phi:H_1\to H_2$ be an abstract commensuration of $H$.  If $K\in \KK$, then $\phi(K\cap H_1)$ is one-ended as it is isomorphic to a finite index subgroup of $K$. Hence $\phi(K\cap H_1)$ is contained in some $K'\in \KK$. Similarly, $\phi^{-1}(K'\cap H_2)$ is contained in some $K''\in \KK$. 
It follows that 
$$ K\cap H_1= \phi^{-1} (\phi(K\cap H_1) \cap H_2) \le \phi^{-1}(K'\cap H_2) \le K''.$$
As $K\cap H_1$ is a finite index subgroup of $K$, it follows that $K''=K$, and so $\phi(K\cap H_1)$ is commensurable to $K'$ as required.
\end{proof}

Next we move on to discuss
the JSJ-decompositions of hyperbolic groups following
Rips-Sela \cite{Sela,rips-sela,rips-sela-jsj} and Bowditch \cite{bowditch-cutpts}.

Following \cite{bowditch-cutpts}, a \emph{Fuchsian group} is a finitely generated group $L$ that acts properly discontinuously on the hyperbolic plane $\bbH^2$. Note that this allows for the action of $L$ to  not be faithful; however, the kernel must necessarily be finite.
The boundary subgroups of $L$ are the stabilizers of the elevations of the ends of $\bbH^2/L$.

\begin{defn}\label{def-jsj}
Let $H$ be a  one-ended hyperbolic group. A \emph{JSJ splitting} of $H$ is a finite graph of groups such that each edge group is 2-ended, and each vertex group is one of the following:
\begin{enumerate}
\item  a 2-ended subgroup,
\item  a maximal hanging Fuchsian subgroup, or
\item  a rigid subgroup.
\end{enumerate}
\end{defn}
Here,  we say that  a vertex group $L$ of a splitting of $H$ over 2-ended subgroups is a \emph{hanging Fuchsian subgroup} of $H$
if $L$ is Fuchsian
and the edge groups incident on $L$ are precisely the boundary subgroups of $L$.
 A vertex group $L$ of such a splitting is a \emph{maximal hanging Fuchsian  subgroup} if it is a hanging Fuchsian subgroup and there does not exist another splitting in which $L$ is properly contained in a hanging Fuchsian subgroup.

 Further, a \emph{rigid subgroup} $R$ of $H$
 is one which is
 \begin{enumerate}
 \item not 2-ended,
 \item elliptic in every splitting of $H$ over 2-ended subgroups. 
 \end{enumerate} 
 {If  $H$ is} virtually a \emph{surface group}, i.e. virtually the fundamental group  of a closed hyperbolic surface, then $H$ has a JSJ splitting consisting of a single maximal hanging Fuchsian vertex group and no edges.

In \cite[Theorem 0.1]{bowditch-cutpts}, Bowditch proves the existence of a canonical such JSJ splitting $T$ \emph{provided $H$ is not virtually a surface group}. In constructing the tree $T$,  Bowditch shows that there exists a collection $\Theta$ of cut pairs of $\partial H$ which:
\begin{enumerate}
	\item \label{item:cutpair_limit} consist precisely of the limit sets of edge stabilizers of $T$;
	\item \label{item:cutpt_topchar} is  invariant under homeomorphisms of $\partial H$.
\end{enumerate} 
In the terminology of \cite{bowditch-cutpts}, $\Theta$ consists of $\approx$-pairs, $\sim$-pairs, and jumps associated to infinite $\sim$-classes; this terminology is not needed in what follows.

Every commensuration $\phi:H_1\to H_2$ naturally induces  a self-quasi-isometry of $H$, and hence a boundary homeomorphism $\partial \phi:\partial H\to \partial H$. By (\ref{item:cutpt_topchar}), $\partial \phi$ preserves $\Theta$ and hence  $\phi(\Stab_H (\theta)\cap H_1)$ is a finite index subgroup of $\stab_H(\partial\phi(\theta))$ for all $\theta\in \Theta$. This shows that $\SSS(\Theta)\coloneqq \{\Stab_H(\theta)\mid \theta\in \Theta\}$ is commensurably invariant. By (\ref{item:cutpair_limit}),  $\SSS(\Theta)$ consists precisely of the maximal two-ended subgroups of $H$ containing some edge stabilizer of $T$ as a finite-index subgroup. This implies that the collection of edge stabilizers of $T$ is also commensurably invariant.

We summarize our discussion in the following theorem:

\begin{theorem}[{\cite{bowditch-cutpts}}]\label{thm: bowditch jsj}
    Let $H$ be a one-ended hyperbolic group, then $H$ has a unique JSJ splitting, and exactly one of the following holds:
    \begin{enumerate}
        \item $\partial H$ does not contain a local cut point. In this case,  $H$ does not virtually split over a 2-ended subgroup, and the JSJ splitting consists of a single vertex, which is rigid.
        \item $H$ is virtually a surface group. In this case,  the JSJ splitting consists of a single vertex, which is Fuchsian.
        \item The JSJ decomposition is non-trivial. In this case, the collection of edge stabilizers is commensurably invariant.
    \end{enumerate}
\end{theorem}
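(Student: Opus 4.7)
The statement collects three cases of Bowditch's JSJ theory for one-ended hyperbolic groups together with a commensurability-invariance statement, so my plan is to reduce each part to the topological characterization of the JSJ splitting from \cite{bowditch-cutpts}.

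First, I would invoke Bowditch's main result directly: for any one-ended hyperbolic group $H$, there is a canonical $\pi_1(H)$-tree $T$ whose quotient is the JSJ splitting in the sense of \cref{def-jsj}, and $T$ is constructed purely topologically from $\partial H$. This gives both existence and uniqueness. The trichotomy then corresponds to three possibilities for this topological construction: $\partial H$ has no local cut points (so there are no separating pairs to cut along), $\partial H$ is a circle (equivalently, $H$ is virtually a surface group, by the Casson--Jungreis / Gabai characterization, combined with Bowditch's observation that this is the only way a Fuchsian vertex can exhaust $H$), or there exist local cut points that produce a non-trivial splitting. For case (1) I would cite \cite[Theorem 6.2]{bowditch-cutpts} as already invoked in the text: no local cut points implies $H$ does not virtually split over a $2$-ended subgroup, forcing the JSJ to be a single rigid vertex. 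For case (2), Definition \ref{def-jsj} plus the fact that a closed hyperbolic surface group has no non-trivial essential splitting over $2$-ended subgroups gives the single Fuchsian-vertex picture.

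The main content to verify is the commensurable invariance in case (3), and I would follow the sketch already present in the paragraph preceding the theorem. The key Bowditch input is a collection $\Theta$ of cut pairs of $\partial H$ such that: (i) $\Theta$ consists exactly of the limit sets of edge stabilizers of $T$, and (ii) $\Theta$ is invariant under every self-homeomorphism of $\partial H$. Granting (i) and (ii), the argument is formal. Let $\phi\colon H_1\to H_2$ be a commensuration, with $H_i\le H$ of finite index. Because $\phi$ is a quasi-isometry of $H$ with itself, it induces a homeomorphism $\partial\phi\colon \partial H\to\partial H$, and by (ii), $\partial\phi(\Theta)=\Theta$. Define $\SSS(\Theta)=\{\Stab_H(\theta):\theta\in\Theta\}$. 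For $\theta\in\Theta$, the subgroup $\Stab_H(\theta)\cap H_1$ has finite index in $\Stab_H(\theta)$, and $\phi$ conjugates it onto a finite-index subgroup of $\Stab_H(\partial\phi(\theta))\in\SSS(\Theta)$; hence $\SSS(\Theta)$ is commensurably invariant. By (i), each element of $\SSS(\Theta)$ is the (unique) maximal $2$-ended subgroup of $H$ containing a given edge stabilizer as a finite-index subgroup, so the collection of edge stabilizers of $T$ is commensurably invariant as well.

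The hard part is really the import from \cite{bowditch-cutpts}, namely the topological construction of $T$ and the verification of (i) and (ii). The non-trivial ingredient in (i) is that Bowditch's $\Theta$, built from $\approx$-pairs, $\sim$-pairs, and jumps of infinite $\sim$-classes, has a purely dynamical realization as limit sets of edge stabilizers; this is what translates the topological invariance into a group-theoretic statement. Property (ii) is immediate from the definition of $\Theta$ in terms of the topology of $\partial H$ alone. Once (i) and (ii) are cited, the trichotomy and the commensurability statement are short consequences, as above. I would therefore present the proof as a brief deduction from \cite{bowditch-cutpts}, spending most of the exposition on the commensurability-invariance step, since that is what is actually needed in the arguments earlier in the paper.
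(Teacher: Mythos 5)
Your proposal is correct and follows essentially the same route as the paper: it cites Bowditch's canonical topologically-defined JSJ tree for existence/uniqueness and the trichotomy, and derives commensurable invariance of edge stabilizers exactly as in the paragraph preceding the theorem, via the homeomorphism-invariant collection $\Theta$ of cut pairs realized as limit sets of edge stabilizers. No gaps; the only cosmetic difference is your added mention of the Casson--Jungreis/Gabai characterization in the surface case, which the paper does not need to invoke.
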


A comment is in order regarding a difference between Bowditch's notion of a canonical JSJ splitting that we use in this paper, and that of Rips-Sela's
(see \cite[p. 184]{bowditch-cutpts}).
Two-ended vertex groups (type 1 groups in Definition~\ref{def-jsj}) do not occur in \cite{Sela,rips-sela,rips-sela-jsj} forcing
edge groups incident on hanging Fuchsian   subgroups to be of finite index, and not equal as in \cite{bowditch-cutpts}.
 This forces the Rips-Sela splitting to be  canonical \emph{up to some
operations}. The canonical JSJ splitting of Bowditch that we use here is unique.

\subsection{Commensurated subgroups}\label{sec: commensurated subgroups}

\begin{lemma}[{\cite{GMRS}}]\label{lem: comm of quasiconvex}
    If $K$ is an infinite quasiconvex subgroup of a hyperbolic group $G$, then $[\Comm_G(K):K]<\infty$.
\end{lemma}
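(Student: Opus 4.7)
The plan is to leverage two classical facts about quasiconvex subgroups of a hyperbolic group $G$: commensurable subgroups share a limit set in $\partial G$, and a quasiconvex subgroup acts cocompactly on the weak convex hull of its limit set. Together these should force $\Comm_G(K)$ to lie in a uniformly bounded neighborhood of $K$ in the Cayley graph of $G$, which immediately yields finite index.

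First I would fix $\Lambda = \Lambda_G(K)\subseteq \partial G$ and $C = \whull(\Lambda)$ as in \cref{def-weakhull}. Given $g\in \Comm_G(K)$, the subgroups $K$ and $gKg^{-1}$ contain a common finite-index subgroup $K\cap gKg^{-1}$, so all three have the same limit set (handled by separating the 2-ended and non-elementary cases, since in each a finite-index subgroup of an infinite subgroup has the same limit set). Since $\Lambda(gKg^{-1})=g\Lambda$, this gives $g\Lambda=\Lambda$. Consequently $g$ carries bi-infinite geodesics with endpoints in $\Lambda$ to other such geodesics, so $gC$ lies within Hausdorff distance $O(\delta)$ of $C$, where $\delta$ is a hyperbolicity constant of $G$.

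Next I would use cocompactness of the $K$-action on $C$ (a standard consequence of quasiconvexity) to fix a vertex $o\in C$ of the Cayley graph of $G$ and a constant $R$ with $C\subseteq N_R(K\cdot o)$. For any $g\in\Comm_G(K)$, the previous step places $g\cdot o$ within a uniform distance $R_0$ of $C$, and hence within distance $R+R_0$ of some $k\cdot o$ with $k\in K$. Since $G$ acts freely and cocompactly on its Cayley graph, $k^{-1}g$ lies in the finite ball $F$ of radius $R+R_0$ about the identity, giving $\Comm_G(K)\subseteq KF$ and therefore $[\Comm_G(K):K]\le|F|<\infty$.

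The only delicate point I anticipate is the claim that $\Lambda$ is preserved under passing to a finite-index subgroup; this is classical but takes a slightly different form in the 2-ended and non-elementary regimes, so I would state it cleanly at the outset. The coarse preservation $gC\subseteq N_{O(\delta)}(C)$ is not a true setwise stabilization, but the error is absorbed into the constant $R_0$, so no genuine obstacle arises. No appeal to the JSJ, bundle, or dendrite machinery of the preceding sections is needed; this lemma is a self-contained geometric statement about quasiconvex subgroups.
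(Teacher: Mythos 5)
Your argument is correct, but note that the paper itself offers no proof of this lemma: it is quoted verbatim from Gitik--Mitra--Rips--Sageev \cite{GMRS}, so there is no internal argument to compare against. What you have written is essentially the classical proof of that cited result: $\Comm_G(K)$ stabilizes $\Lambda_G(K)$ because commensurable infinite subgroups have equal limit sets and $\Lambda_G(gKg^{-1})=g\Lambda_G(K)$; the weak hull of $\Lambda_G(K)$ is at finite Hausdorff distance from $K$ by quasiconvexity (the same fact the paper invokes from \cite{swenson2001quasiconvex}); hence every commensurating element moves a fixed basepoint of the hull into a uniform neighborhood of the orbit $K\cdot o$, giving $\Comm_G(K)\subseteq KF$ for a finite set $F$. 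Two cosmetic points: the case split between 2-ended and non-elementary $K$ is unnecessary, since for any infinite subgroup a finite-index subgroup lies at finite Hausdorff distance from it (finitely many cosets), which already forces equal limit sets uniformly; and what makes $F$ finite is properness (local finiteness of the Cayley graph) rather than freeness of the action, with the radius of the ball picking up an extra additive term $2d(e,o)$ if $o\neq e$ --- both harmless. So your proof is a valid self-contained substitute for the citation, and no machinery from the rest of the paper is needed, exactly as you say.
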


\begin{lemma}\label{comm-inv quasiconvex collection}
    Let $G$ be a hyperbolic group, and let $H$ be a commensurated hyperbolic subgroup of $G$ of infinite index.
    Then, $H$ does not contain a finite collection of subgroups $K_1,\dots,K_n$ such that
    \begin{enumerate}
        \item $K_1$ is quasiconvex in $G$, 
        \item $\KK=\{hK_ih^{-1}\;|\;h\in H\}$ is commensurably invariant in $H$.
    \end{enumerate}
\end{lemma}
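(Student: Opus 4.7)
My plan is a proof by contradiction. Assume a collection $K_1,\dots,K_n$ as in the statement exists, and aim to derive $[G:H]<\infty$, contradicting the infinite-index hypothesis. Throughout I will assume $K_1$ is infinite: for finite $K_1$ the commensurable-invariance condition is automatic (any two finite subgroups are mutually commensurable), and the lemma is presumably intended for the infinite quasiconvex case, e.g.\ edge groups of a JSJ decomposition.

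The first step is to convert the commensuration of $H$ in $G$ into abstract commensurations of $H$. For each $g\in G$, conjugation by $g$ gives an abstract commensuration $\phi_g\colon H\cap g^{-1}Hg\to H\cap gHg^{-1}$. Applying commensurable invariance of $\KK$ to $K_1\in\KK$ and $\phi_g$, the subgroup $g(K_1\cap g^{-1}Hg)g^{-1}$ is commensurable in $H$ (and hence in $G$, since commensurability of subgroups is intrinsic) to some $hK_ih^{-1}\in\KK$. Since $K_1\le H$ and $H\cap g^{-1}Hg$ is finite-index in $H$, the subgroup $K_1\cap g^{-1}Hg$ is finite-index in $K_1$, so $g(K_1\cap g^{-1}Hg)g^{-1}$ is finite-index in $gK_1g^{-1}$. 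I conclude: for every $g\in G$ there exist $h\in H$ and $i\in\{1,\dots,n\}$ with $gK_1g^{-1}$ commensurable in $G$ to $hK_ih^{-1}$.

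I now organize this by commensurability class. Set $A_i=\{g\in G:gK_1g^{-1}\text{ is commensurable in }G\text{ to }K_i\}$; the previous paragraph gives $G=\bigcup_{i=1}^n H\cdot A_i$. Two elements $g,g'\in A_i$ satisfy $g'^{-1}g\in\Comm_G(K_1)$, so each non-empty $A_i$ is a left coset of $\Comm_G(K_1)$. Choosing a representative $g_i\in A_i$ I obtain a finite double-coset decomposition $G=\bigcup_i Hg_i\Comm_G(K_1)$, and the number of left $H$-cosets contained in $Hg_i\Comm_G(K_1)$ equals $[\Comm_G(K_1):\Comm_G(K_1)\cap g_i^{-1}Hg_i]$.

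The final step is the index count. By Lemma \ref{lem: comm of quasiconvex}, $[\Comm_G(K_1):K_1]<\infty$, so it is enough to show that $K_1\cap g_i^{-1}Hg_i$ is finite-index in $K_1$. But since $K_1\le H$, one has $K_1\cap g_i^{-1}Hg_i=K_1\cap(H\cap g_i^{-1}Hg_i)$, the intersection of $K_1$ with the finite-index subgroup $H\cap g_i^{-1}Hg_i\le H$, which is finite-index in $K_1$. Therefore each of the finitely many double cosets contains only finitely many left $H$-cosets, giving $[G:H]<\infty$, the desired contradiction. The essential geometric input is the GMRS commensurator theorem (Lemma \ref{lem: comm of quasiconvex}); the remaining work—and the only real subtlety—is the double-coset bookkeeping of the last two paragraphs.
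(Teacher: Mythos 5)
Your proof is correct and rests on the same two ingredients as the paper's: (i) conjugation by $g\in G$ restricts to an abstract commensuration of $H$, so commensurable invariance of $\KK$ forces $gK_1g^{-1}$ to be commensurable to some $hK_ih^{-1}$, and (ii) this places the relevant elements in $\Comm_G(K_1)$, where the GMRS theorem (\cref{lem: comm of quasiconvex}) delivers the contradiction. The difference is purely organizational: the paper runs a subsequence argument that produces infinitely many distinct $K_1$-cosets inside $\Comm_G(K_1)$, whereas you repackage the same information as a finite decomposition $G=\bigcup_i Hg_i\Comm_G(K_1)$ and count $H$-cosets in each double coset to conclude $[G:H]<\infty$; the two are logically equivalent and of comparable length. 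You are also right to flag that the statement tacitly requires $K_1$ to be infinite (otherwise the trivial subgroup is a counterexample), a hypothesis both proofs need in order to invoke \cref{lem: comm of quasiconvex}.
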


\begin{proof}
    Assume that $\KK$ is commensurably invariant.
    Let $Hg_0,Hg_1,Hg_2,\dots$ be different cosets of $H$ in $G$.
    Conjugation by any element $g_n\in G$ restricts to  an abstract commensuration of $H$, 
    and so $g_n K_1 g_n^{-1}$ is commensurable to some $h_n K_{i_n} h^{-1}\in\KK$.
    By replacing $g_n$ by $h_n^{-1}g_n$, we may assume $g_n K_1 g_n^{-1}$ is commensurable to $K_{i_n}$.
    By passing to a subsequence we may assume that $i_n=i$ for all $i_n$.
    Setting $g_n'=g_0^{-1}g_n$, we see that $g_n'\in \Comm_G(K_1)$.
    The element $g_0$ commensurates $H$ and so there exist finite index subgroups $H',H''$ of $H$ so that $g_0 H' = H'' g_0$. 
    $H'' \le H$ and so $\{H''g_n\}_n$ are different cosets. Now
    $H'g_n' = H'g_0^{-1}g_n = g_0^{-1} H'' g_n$, and so 
    $\{H'g_n'\}_n$ are different cosets.
    Since $H'$ has finite index in $H$, we may pass to a subsequence and assume that $\{H g_n'\}_n$ are different cosets. 
    In particular, $\{K_1 g_n'\}_n$ are different cosets. We get $[\Comm_G(K_1):K_1]=\infty$ in contradiction to \cref{lem: comm of quasiconvex} and the assumption that $K_1$ is quasiconvex in $G$.
\end{proof}

Recall that any two-ended subgroup of a hyperbolic group is quasiconvex. Hence, as a corollary of \cref{thm: end decomposition,thm: bowditch jsj,comm-inv quasiconvex collection} we get:
\begin{cor}\label{JSJ of comm subgp is trivial}
    Let $G$ be a hyperbolic group, and let $H$ be an infinite commensurated hyperbolic subgroup of infinite index. Then, the one-ended vertex groups of the end-decomposition of $H$ have trivial JSJ-decomposition.
\end{cor}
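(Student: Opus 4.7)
The plan is to chain together the three cited results by contradiction. Apply \cref{thm: end decomposition} to $H$ to obtain an end-decomposition whose one-ended vertex groups have conjugacy-class representatives $H_{v_1},\ldots,H_{v_k}$. By the unlabeled lemma immediately following \cref{thm: end decomposition}, the collection $\VV=\{hH_{v_i}h^{-1}:h\in H,\,i\}$ is commensurably invariant in $H$. Suppose for contradiction that some $H_{v_i}$ has non-trivial JSJ splitting; the goal is to produce a finite commensurably invariant collection in $H$ of 2-ended subgroups that are quasiconvex in $G$, contradicting \cref{comm-inv quasiconvex collection}.

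Collect representatives of $H_{v_j}$-conjugacy classes of JSJ edge stabilizers over all $j$ with non-trivial JSJ into a finite set $\mathcal{E}=\{E_1,\ldots,E_n\}$. Each $E_\ell$ is 2-ended in $H$, hence 2-ended in $G$, and therefore quasiconvex in the hyperbolic group $G$. The main step is to verify that $\KK=\{hE_\ell h^{-1}:h\in H,\,\ell\}$ is commensurably invariant in $H$. Given an abstract commensuration $\phi:H_1\to H_2$ of $H$ and $K=hE_\ell h^{-1}\in\KK$ with $E_\ell\le H_{v_j}$, the commensurability invariance of $\VV$ supplies $h'\in H$ and an index $j'$ such that $\phi(hH_{v_j}h^{-1}\cap H_1)$ is commensurable to $h'H_{v_{j'}}h'^{-1}$. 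After conjugating away the factors of $h$ and $h'$, $\phi$ restricts to an abstract commensuration $\psi$ between finite-index subgroups of $H_{v_j}$ and $H_{v_{j'}}$; this $\psi$ induces a boundary homeomorphism $\partial H_{v_j}\to\partial H_{v_{j'}}$. Because Bowditch's JSJ is read off topologically from the homeomorphism-invariant collection $\Theta$ of cut pairs (the discussion preceding \cref{thm: bowditch jsj}), $\psi$ must carry the $H_{v_j}$-conjugacy class of each edge stabilizer into a commensurable finite-index subgroup of an edge stabilizer of $H_{v_{j'}}$. In particular $H_{v_{j'}}$ also has non-trivial JSJ, so its edge stabilizers were already placed in $\mathcal{E}$, and hence $\phi(K\cap H_1)$ is commensurable to some element of $\KK$.

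With $\KK$ commensurably invariant in $H$ and each $E_\ell$ quasiconvex in $G$, \cref{comm-inv quasiconvex collection} produces the desired contradiction, so every one-ended vertex group has trivial JSJ. The main obstacle is the cross-vertex-group case $j\neq j'$ in the previous paragraph: the commensurability-invariance conclusion in \cref{thm: bowditch jsj}(3) is formulated for self-commensurations of a single vertex group, so one cannot invoke it directly and must instead use the topological characterization of $\Theta$ to transport edge stabilizers across distinct but abstractly commensurable one-ended vertex groups of the end-decomposition.
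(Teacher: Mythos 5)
Your proof is correct and follows essentially the same route the paper intends: the corollary is presented there as an immediate consequence of the end-decomposition, the commensurable invariance of its one-ended vertex groups, Bowditch's JSJ theorem, and \cref{comm-inv quasiconvex collection}, which is exactly the chain you carry out, using that two-ended subgroups are quasiconvex in $G$. The cross-vertex-group point you isolate (transporting JSJ edge stabilizers between distinct but abstractly commensurable one-ended vertex groups via the topological characterization of Bowditch's cut-pair collection $\Theta$, noting that being virtually a surface group is a commensurability invariant) is precisely the detail the paper leaves implicit, and your handling of it is sound.
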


\subsection{Cayley-Abels graphs and metric graph bundles}\label{sec-ca} Let $G$ be a finitely generated group with finite generating set $S$, and 
let $H<G$ be a commensurated subgroup, 
 i.e.\ $H^g \cap H$ is of finite index in $H$ for
all $g \in G$. 

\begin{defn}\label{def-ca}
	The \emph{Cayley-Abels graph} $\GG=\GG_{G/H}=\GG(G,H,S)$ of the pair $(G,H)$   with respect to $S$ is a simplicial graph with  vertex set $G/H$ and edge set $\{(gH,gsH)\mid g\in G,s\in S\}$.
\end{defn}
It is straightforward to see that $\GG$ is connected, locally finite and depends only on $(G,H)$ up to quasi-isometry; see for instance \cite[Proposition 2.17--2.19]{margolis2022discretisable}. Moreover, $G$ acts on $\GG_{G/H}$ by left-multiplication.  We note that if $H$ is normal in $G$, then $\GG_{G/H}$ is precisely the Cayley graph of the quotient $G/H$  (after removing loops and multiple edges). In particular,  $\GG(G,1,S)$ is  the Cayley graph of $G$ with respect to $S$.

\begin{rmk}
	For $G$, $H$ and $\GG_{G/H}$  as above, the closure of the image of the left action  $\rho: G\to \Aut(\GG_{G/H})$ is a compactly generated tdlc (totally disconnected locally compact) group $G\big/\!\big/H$ called the \emph{Schlichting completion} of $G$ with respect to $H$. We note that $\rho(G)$ is dense in $G\big/\!\big/H$ and the closure  $K$ of $\rho(H)$  is a compact open subgroup of $G\big/\!\big/H$.

	For a suitable choice of generating set, the Cayley--Abels graph of $G$ with respect to $H$ is precisely the Cayley--Abels graph of the tdlc $G\big/\!\big/H$, as constructed by Abels \cite{abels-cayley}.   We refer the reader to \cite{shalom-willis,bfmv} for further details on the Schlichting completion and the Cayley-Abels graph.
\end{rmk}
Let $G$ be a finitely generated group, $S$ a finite generating set and $H<G$ a finitely generated  commensurated subgroup. There is a naturally defined surjective simplicial \emph{quotient map} $p:\GG(G,1,S)\to \GG(G,H,S)$ which is given by $g\mapsto gH$ on vertices, and sends an edge $(g,gs)$ to  the edge $(gH,gsH)$ in the case $gH\neq gsH$, or to the vertex $gH$ in the case $gH=gsH$.

\begin{prop}[c.f.\ {\cite[Proposition 3.14]{margolis-gt}}]\label{prop-bdl}
	Let $G$ and $H$  be as above. There exists a finite generating set $S$ of $G$ such that the quotient map $p:\GG(G,1,S)\to \GG(G,H,S)$ is a metric graph bundle with fibers isometric to $\GG(H,1,S\cap H)$.
\end{prop}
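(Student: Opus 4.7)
My approach is to construct $S$ by augmenting a naive generating set with enough double-coset representatives to guarantee that every adjacency in the base $\GG(G,H,S)$ can be lifted to an edge through every point of the fiber above it. Concretely, I would first fix a finite symmetric generating set $S_H$ of $H$, and then a finite symmetric generating set $S_0\supseteq S_H$ of $G$. Since $H$ is commensurated, for every $s\in S_0$ the subgroup $H\cap sHs^{-1}$ has finite index in $H$, so the double coset $HsH$ is a finite disjoint union of left $H$-cosets. Choose a finite set $R_s$ consisting of one representative of each such coset, and set
\[
S = S_H \cup S_0 \cup \bigcup_{s\in S_0}\bigl(R_s \cup R_s^{-1}\bigr).
\]
This is finite and symmetric, still generates $G$, and still contains $S_H$ inside $S\cap H$.

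The central property to extract from this construction is the following closure statement: for every $s\in S$ and every $h\in H$, the coset $h^{-1}sH$ meets $S$. Indeed, every element of $S$ lies in some double coset $Hs'H$ with $s'\in S_0$ (either by inclusion in $S_0$ or by construction of $R_{s'}$), so $h^{-1}s\in Hs'H$, and $h^{-1}sH$ is therefore one of the finitely many $H$-cosets comprising $Hs'H$, which by the choice of $R_{s'}\subseteq S$ contains an element of $S$. This is exactly what condition~(2) of \cref{defn-mgbdl} will require.

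With this in hand, verifying that $p$ is a metric graph bundle should be routine. The fiber $F_{gH}=p^{-1}(gH)$ is the subgraph of $\GG(G,1,S)$ spanned by $gH$; two vertices $gh_1,gh_2$ are joined by an edge iff $h_1^{-1}h_2\in S$, and since $h_1^{-1}h_2\in H$, this is equivalent to $h_1^{-1}h_2\in S\cap H$. Thus left multiplication by $g^{-1}$ gives a graph isomorphism $F_{gH}\cong \GG(H,1,S\cap H)$, an isometry of intrinsic metrics; connectedness follows because $S\cap H\supseteq S_H$ generates $H$. For uniform metric properness I would define $f(n)=\max\{d_H(e,h):h\in H,\, d_G(e,h)\le n\}$, which is finite because balls in $\GG(G,1,S)$ are finite; uniformity over all fibers is automatic because left multiplication by $g$ is a graph automorphism of $\GG(G,1,S)$ carrying $F_e$ isometrically onto $F_{gH}$. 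Condition~(2) is exactly the closure property above: given adjacent $gH$ and $g'H=gsH$ in the base and any $gh\in F_{gH}$, one needs $s_1\in S\cap h^{-1}sH$, which exists.

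The main thing to be careful about is the closure property, since everything else is either automatic or standard bookkeeping. This is precisely the step where commensurability (as opposed to just normality) is essential: it is the finiteness of $[H:H\cap sHs^{-1}]$ for every $s\in S_0$ that makes the enlargement $S_0\rightsquigarrow S$ finite, and it is this finiteness that lets us arrange every fiber-point to have an edge lifting every base-edge above it.
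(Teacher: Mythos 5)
Your proposal is correct and takes essentially the same approach as the paper: both enlarge an initial generating set $S_0$ (with $S_0\cap H$ generating $H$) by finitely many elements supplied by the finiteness of $[H:H\cap sHs^{-1}]$ --- your double-coset representatives $R_s$ are exactly the paper's sets $F_s s$ with $H\subseteq F_s sHs^{-1}$ --- and then verify condition (2) of \cref{defn-mgbdl} by showing every base edge lifts to an edge through every vertex of the fiber (your ``closure property'' is the paper's computation $h_1f=f'sh_3s^{-1}$ in disguise). The remaining verifications (fibers are induced copies of $\GG(H,1,S\cap H)$, uniform metric properness via finiteness of balls and equivariance of left multiplication) likewise match the paper's proof.
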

\begin{proof}
	We pick a finite symmetric generating set $S_0$ of $G$ such that $S_0\cap H$ generates $H$. Since $H$ is commensurated in $G$, for each $s\in S_0$ there exists a finite set $F_s\subseteq H$ such that $H\subseteq F_s sHs^{-1}$. We set $F=\cup_{s\in S_0}F_s$, and may assume without loss of generality that $1\in F$. Set $S=FS_0$, which is a finite generating set of $G$ with the property that $S\cap H$ generates $H$.

	Let $\GG_G=\GG(G,1,S)$, $\GG_{G/H}=\GG(G,H,S)$ and $\GG_H=\GG(H,1,S\cap H)$, noting that $\GG_G$ and $\GG_H$ are simply the Cayley graphs of $G$ and $H$. Since $F_H=p^{-1}(H)=\GG_H$, there exists a function $f:\bbN\to\bbN$ such that the inclusion $\GG_H\hookrightarrow \GG_G$ is metrically proper as measured by $f$. Since $p$ is $G$-equivariant, the isometric action of  $G$ on $\GG_G$ is  transitive   on fibers of $p$, hence all fibers are isometric to $\GG_H$ and are  metrically proper as measured by $f$. 

	It remains to show that for each $g\in G$ and vertex $bH\in \GG_{G/H}$ adjacent to $gH=p(g)$, $g$ is adjacent to some vertex in $p^{-1}(bH)$. To see this, first note the definition of $\GG_{G/H}$ ensures that $b=gh_1fsh_2$ for some $h_1,h_2\in H$, $s\in S_0$ and $f\in F\subseteq H$. Since $h_1f\in H$, the definition of $F_{s}$ ensures that $h_1f=f'sh_3s^{-1}$ for some $f'\in F$ and $h_3\in H$. Since $f's\in S$, $gf's$ is adjacent to $g$ and \[p(gf's)=gf'sH=gf'sh_3h_2H=gf'sh_3s^{-1}sh_2H=gh_1fsh_2H=bH\] as required.
\end{proof}

The following is now an immediate consequence of Lemma~\ref{lem-qiexists}.

\begin{cor}\label{cor-qhyp}
	Let $G$ be a hyperbolic group with $H< G$ a hyperbolic commensurated subgroup.
Then the Cayley-Abels graph $\GG_{G/H}$ is also hyperbolic.
\end{cor}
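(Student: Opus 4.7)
The plan is to assemble the pieces already developed in the preceding section: use \cref{prop-bdl} to realize the Cayley graph of $G$ as a metric graph bundle over the Cayley--Abels graph $\GG_{G/H}$, verify that this bundle has controlled hyperbolic fibers, and then invoke the last clause of \cref{lem-qiexists} to conclude hyperbolicity of the base.

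More concretely, I would first apply \cref{prop-bdl} to pick a finite generating set $S$ of $G$ with $S \cap H$ generating $H$ so that the quotient map $p : \GG(G,1,S) \to \GG(G,H,S)$ is a metric graph bundle whose fibers are all isometric to the Cayley graph $\GG(H,1,S \cap H)$. Since $H$ is hyperbolic, this fiber is some $\delta$-hyperbolic graph; because every fiber is isometric to this one fixed graph, the first condition in \cref{def:controlled fibers} holds uniformly. For the second condition, the barycenter map $\br : \partial^{(3)} \GG(H,1,S \cap H) \to \GG(H,1,S \cap H)$ is coarsely surjective (this is standard for a hyperbolic group acting geometrically on its Cayley graph: given any vertex $v$, pick any three distinct points on $\partial H$ and translate the resulting barycenter by a group element bringing it close to $v$), and this coarse surjectivity transfers uniformly to every fiber because they are all isometric.

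Having checked that $p$ has controlled hyperbolic fibers, I would then observe that $\GG(G,1,S)$ is the Cayley graph of the hyperbolic group $G$, hence hyperbolic. The final sentence of \cref{lem-qiexists} then immediately yields hyperbolicity of $\GG(G,H,S) = \GG_{G/H}$. Since the Cayley--Abels graph is well-defined up to quasi-isometry independently of the choice of generating set, this suffices.

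There is really no obstacle here; the only mildly non-trivial point is verifying controlled hyperbolic fibers, which reduces to a one-line observation because all fibers are isometric to a single Cayley graph of a hyperbolic group. This is why the corollary is stated as an immediate consequence of \cref{lem-qiexists}.
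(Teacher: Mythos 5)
Your proposal is correct and follows exactly the paper's route: realize $\GG(G,1,S)\to\GG(G,H,S)$ as a metric graph bundle via \cref{prop-bdl}, note the fibers are all isometric to a Cayley graph of the hyperbolic group $H$ (so the controlled-hyperbolic-fibers condition of \cref{def:controlled fibers} holds, the barycenter coarse surjectivity coming from cocompactness of the $H$-action), and apply the last sentence of \cref{lem-qiexists} using hyperbolicity of the Cayley graph of $G$; this is precisely why the paper states the corollary as an immediate consequence of \cref{lem-qiexists}. The only caveat, shared equally by the paper's implicit argument, is that the barycenter condition needs $|\partial H|\geq 3$, i.e.\ $H$ non-elementary --- the elementary cases being trivial anyway (for $H$ finite $\GG_{G/H}$ is quasi-isometric to $G$, and a commensurated two-ended $H$ has finite index by \cref{lem: comm of quasiconvex}).
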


Applying \cref{prop-bdl} and 
\cref{thm-flare-nec}, we deduce the following.
\begin{cor}\label{cor-flare}
		Let $G$ be a hyperbolic group with $H< G$ a hyperbolic commensurated subgroup. Then the metric graph bundle $p:\GG_G\to \GG_{G/H}$  as in Proposition~\ref{prop-bdl} satisfies a flaring condition.
\end{cor}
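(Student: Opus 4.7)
The plan is to apply \cref{thm-flare-nec} directly, using the bundle structure provided by \cref{prop-bdl}. That theorem is a biconditional: a metric graph bundle with controlled hyperbolic fibers has hyperbolic total space if and only if the base is hyperbolic and the flaring condition holds. Since $\GG_G$ is the Cayley graph of the hyperbolic group $G$ (hence hyperbolic), and $\GG_{G/H}$ is hyperbolic by \cref{cor-qhyp}, the flaring condition will follow once we verify that the bundle has controlled hyperbolic fibers.

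To verify the controlled hyperbolic fibers condition, I would use the fiber description from \cref{prop-bdl}: every fiber $p^{-1}(gH)$ is isometric to the Cayley graph $\GG(H,1,S\cap H)$ of the hyperbolic group $H$. This immediately gives uniform $\delta$-hyperbolicity of all fibers with $\delta$ depending only on $H$ and $S\cap H$. For the uniform coarse surjectivity of the barycenter map, observe that the conjugate subgroup $gHg^{-1}\le G$ acts vertex-transitively by isometries on $p^{-1}(gH)$ via left multiplication on the coset $gH$. Assuming $H$ is non-elementary, $\partial^{(3)}\GG_H$ is non-empty, so the image of $\br_{F_b}$ is non-empty, and cocompactness of the action promotes this to uniform coarse surjectivity. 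Since all fibers are isometric, the same constant $L$ works across all $b \in \VV(\GG_{G/H})$.

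The one subtlety concerns the degenerate cases where $H$ is finite or two-ended. In these cases $\partial^{(3)}\GG_H$ is empty so the barycenter condition must be interpreted; however, the flaring condition can then be checked directly, since fibers are either uniformly bounded (making the flaring inequality vacuous) or quasi-isometric to $\R$ (where flaring reduces to a standard statement about pairs of quasi-isometric sections into a quasi-line). Alternatively, one can restrict attention to the non-elementary case, which is the only case of interest for the main theorems. With controlled hyperbolic fibers in hand, \cref{thm-flare-nec} delivers the flaring condition at once. I do not expect any genuine difficulty; this corollary is a direct bundle-theoretic packaging of \cref{prop-bdl} with the hyperbolicity of $\GG_G$ and \cref{cor-qhyp}.
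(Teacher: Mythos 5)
Your proposal is correct and follows essentially the same route as the paper, whose proof is precisely the one-line application of \cref{prop-bdl} together with the ``only if'' direction of \cref{thm-flare-nec} to the hyperbolic total space $\GG_G$ (with the controlled-hyperbolic-fibers hypothesis supplied, as you note, by the fibers being isometric to a Cayley graph of the hyperbolic group $H$, where cocompactness of the $H$-action gives coarse surjectivity of the barycenter map in the non-elementary case). Your citation of \cref{cor-qhyp} is harmless but redundant, since hyperbolicity of the base already comes out of the same direction of \cref{thm-flare-nec}.
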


\subsection{Proof of \texorpdfstring{\cref{intro-hypcommens}}{Theorem \ref{intro-hypcommens}}}
\begin{proof} By \cref{thm: end decomposition} we can write $H$ as the fundamental group of a graph of groups $$\HH=(\Gamma,\{H_v\}_{v\in \VV(\Gamma)},\{H_e,i_e:H_e\to H_{t(e)}\}_{e\in \EE(\Gamma)})$$ whose vertex groups are one-ended or finite, and whose edge groups are finite.
By \cref{JSJ of comm subgp is trivial}, each one-ended $H_v$ has a trivial JSJ-decomposition. By \cref{thm: bowditch jsj}, either $\partial H_v$ has no local cut points or $H_v$ is virtually a surface group.

By \cref{prop-bdl,cor-flare}, we have a hyperbolic metric graph bundle $p:\GG_G\to \GG_{G/H}$ with controlled hyperbolic fibers. 
By \cref{lem:general B to ray}, we can restrict it to a hyperbolic bundle over a ray $p:X\to [0,\infty)$, such that each fiber $p^{-1}(n)=F_n$ is isometric to a Cayley graph of $H$.  We note that limit sets of conjugates of one-ended vertex groups of $\HH$ correspond precisely to infinite connected components of $\partial H$; see e.g.\ \cite[Proposition 3.1]{martinswi2015infended}.

For each one-ended vertex group $H_v$, let $\CC_0$ be the weak convex hull of the limit set $\Lambda_{F_0}( H_v)\subseteq \partial F_0$, which is at finite Hausdorff distance from $H_v$ since $H_v$ is quasiconvex in $F_0$ \mbox{
\cite{swenson2001quasiconvex}}\hskip0pt
.  The flowed image $\FF \CC_0$  is a  hyperbolic metric graph bundle $p:\FF \CC_0 \to [0,\infty)$  by Corollary \ref{cor:flowedisMB}. We claim $p:\FF \CC_0 \to [0,\infty)$ has controlled hyperbolic fibers. Indeed, each such fiber $\CC_n$  is the weak convex hull of an infinite connected component of $\partial F_n$. Since each $F_n$ is isometric to a Cayley graph of $H$,  there are only  finitely many isometry types of fibers, hence $\FF \CC_0$  has controlled hyperbolic fibers.
By \cref{intro-main}, $H_v$ virtually splits over a two-ended subgroup, and so $H_v$ is virtually a surface group.

We have shown that for every $v\in \VV(\Gamma)$, $H_v$ has a finite index subgroup $H'_v$ which is either trivial or a surface group. 
Let $N_v \lhd H_v$ be such that $N_v\le H'_v$ (in particular, $N_v$ is torsion free and so $N_v \cap i_e(H_e)=\{1\}$ for every edge $e$ such that $t(e)=v$). Let $F$ be the fundamental group of the graph of groups
$$(\Gamma,\{\overline H_v\}_{v\in \VV(\Gamma)},\{\overline H_e,\; i_e:\overline H_e\to \overline H_{t(e)} \}_{e\in \EE(\Gamma)}),$$
where $\overline H_v =H_v/N_v,\; \overline H_e =H_e$.
There is a surjection $\alpha:H\to F$.
The group $F$ is virtually free, and so it contains a finite index free subgroup $F'$ (in particular $F' \cap \overline H_v=\{1\})$. The preimage $\alpha^{-1}(F')$ is a finite index subgroup of $H$ which has a graph of groups whose edge groups are trivial, and whose vertex groups are surface groups or trivial.
\end{proof}

\begin{rmk}
 The conclusion of  \cref{intro-main} is weaker than that of \cref{intro-hypcommens}. However, for hyperbolic groups without 2-torsion, we
 can get a stronger conclusion.

 Let $H_0$ be a one-ended vertex group in the JSJ decomposition of $H$ (\cref{sec-jsj}).
	Note  that $H_0$ is quasiconvex in $H$, and choosing $\ZZ=\ZZ_0=\partial H_0$ in Definition~\ref{def-weakhull}, the flowed image $\FF\CC_0$ can be constructed as in
\cref{def-flowedimage}. Further, since weak hulls are uniformly quasiconvex, we can and shall regard (as in the proof of \cref{intro-hypcommens} above) $\FF\CC_0$ as a hyperbolic metric graph bundle with controlled hyperbolic fibers
with initial fiber quasi-isometric to $H_0$. Hence, by \cref{intro-main}, $H_0$ virtually splits over a 2-ended subgroup. In particular, it has a JSJ splitting
in the sense of \cref{sec-jsj}. Proceeding inductively, we obtain a JSJ hierarchy \cite{louder-touikan}.

Now, suppose in addition that $H$ has no 2-torsion. By the positive resolution of Swarup's conjecture on strong accessibility of such
hyperbolic groups \cite{louder-touikan}, the JSJ hierarchy terminates and
all one-ended groups at the bottom of the hierarchy are necessarily surface groups.

A theorem due to Carrasco-Mckay \cite{carrasco-mckay} then shows that
the conformal dimension of $\partial H$ is one. 
\end{rmk}

\subsection{Proof of \texorpdfstring{\cref{prop:conclusions}}{Proposition \ref{prop:conclusions}}}
\label{sec-prop-restrns}

If $\rho:G\to \Aut(\GG)$ is the action of $G$ on $\GG$, we recall that   the Schlichting Completion $G\big/\!\big/H$ is the closure of $\rho(G)$ in $\Aut(\GG)$; see Section \ref{sec-ca}. The Schlichting Completion is a totally disconnected locally compact group quasi-isometric to $\GG$ when equipped with the word metric with respect to a compact generating set; see for instance \mbox{
\cite[Example 4.C.4.4 \&  Theorem 4.C.5]{cornulierdelaharpe2016metric}}.\hskip0pt
 It  satisfies the property that the closure $K$ of $\rho(H)$ is  a compact open subgroup. We observe that if $G\big/\!\big/H$ is compact-by-discrete, then $K$ is commensurable to a compact open normal subgroup $K'\vartriangleleft G\big/\!\big/H$, whence $H$ is commensurable to the normal subgroup $\rho^{-1}(K')\vartriangleleft G$. We observe also that totally disconnected Lie groups are necessarily discrete. Thus if $G\big/\!\big/H$ is compact-by-Lie, then $H$ is commensurable to a normal subgroup of $G$. We use these observations to prove (1) and (2) of \cref{prop:conclusions}.

\begin{proof}[Proof of (1) and (2) of \cref{prop:conclusions}]
	By the preceding remarks, it is sufficient to show that the Schlichting completion is compact-by-Lie.	
	If $\GG$ is quasi-isometric to a rank-one symmetric space $X$, then so is $G\big/\!\big/H$. Quasi-isometric rigidity for such spaces \cite[Theorem 19.25]{cornulier2018quasiisometric}  implies that there is a compact  normal subgroup $K\vartriangleleft G\big/\!\big/H$ such that the quotient $Q=(G\big/\!\big/H)/K$ is a  closed subgroup of the  Lie group $\Isom(X)$. Thus $G\big/\!\big/H$ is compact-by-Lie, proving (1).

	The \emph{Hilbert--Smith Conjecture} says that if a locally compact group $G$ acts faithfully and continuously by homeomorphisms on a connected $n$-manifold, then $G$ is a Lie group. This is known to be true in dimensions $1\leq n\leq 3$ by work of Montgomery--Zippin and Pardon \cite{montgomeryzippin1955topological,pardon2013hilbertsmith}. In the case $\partial \GG$ is homeomorphic to $S^n$ for $1\leq n\leq 3$, the action of $G\big/\!\big/H$ on {$\partial\GG$ } has compact kernel, whose quotient is a Lie group by the Hilbert--Smith conjecture. 	
	In the case $\partial \GG=S^0$ (where $S^0$ is a pair of points), we have that $G\big/\!\big/H$ is quasi-isometric to $\bbR$. Thus \cite[Theorem 19.38]{cornulier2018quasiisometric}  implies $G\big/\!\big/H$ is compact-by-Lie. This proves $G\big/\!\big/H$ is compact-by-Lie in both cases,  proving (2).
	\end{proof}

	To prove (3) of \cref{prop:conclusions}, we use the characterization of amenable locally compact hyperbolic groups by Caprace--Cornulier--Monod--Tessera \cite{capracecornuliermonodtessera2015amenable}. In particular, we refer the reader  to  \cite{capracecornuliermonodtessera2015amenable} for the definitions of  \emph{compacting automorphisms} and \emph{vacuum sets}.

	\begin{proof}[Proof of (3) of \cref{prop:conclusions}]
	We argue by  contradiction: assume that  $G$ fixes a point of $\partial \GG$. Since $\rho(G)$ is dense in $G\big/\!\big/H$,  it follows that $G\big/\!\big/H$ also fixes a point of $\partial \GG$. As $|\partial \GG|>2$,  $G\big/\!\big/H$ is an amenable hyperbolic group  in the sense of \cite{capracecornuliermonodtessera2015amenable}. 

	Since $G\big/\!\big/H$ is totally disconnected and $\rho(G)$ is dense in $G\big/\!\big/H$,  the main theorem of \cite{capracecornuliermonodtessera2015amenable} implies that there exists some $g\in G$ such that $G\big/\!\big/H=L\rtimes \langle \rho(g)\rangle$, where $\rho(g)$ acts as a compacting automorphism of the totally disconnected noncompact group $L$. By \cite[Lemma 6.4]{capracecornuliermonodtessera2015amenable} there is some compact open subgroup $K\leq G\big/\!\big/H$ which is a vacuum set of $\rho(g)$ (see \cite{capracecornuliermonodtessera2015amenable} for the notion of  a vacuum set). Since all compact open subgroups are commensurable to one another, $\rho^{-1}(K)$ is commensurable to $H$. Replacing $H$ with $\rho^{-1}(K)$, we may assume that $\rho(H)$ is dense in $K$.

	The definition of a vacuum set ensures there exist $g_1,g_2\in G$ and $n$ sufficiently large such that $\rho(g_1)K\neq \rho(g_2)K$ and $\rho(g)^n\rho(g_i)K\rho(g)^{-n}\subseteq K$ for $i=1,2$. Thus  $g_iH\subseteq g^{-n}Hg^n$ for $i=1,2$. Since $g_1\in g_1H\subseteq g^{-n}Hg^n$, it follows that $H\leq g^{-n}Hg^n$. Since $H$ is commensurated, $H$ is a finite index subgroup of $g^{-n}Hg^n$. By Theorem \ref{intro-hypcommens}, $H$ is virtually a free product of hyperbolic surface and free groups, and is not two-ended by \mbox{
\cref{lem: comm of quasiconvex}}\hskip0pt
. Consequently, since free and hyperbolic surface groups have negative Euler characteristic, $H$   has negative rational Euler characteristic; see \mbox{
\cite[Proposition IX.7.3. b$'$, e \& e$'$]{brown1994cohomology}}\hskip0pt. In particular, as Euler characteristic is multiplicative with index,  $H$ cannot be isomorphic to a proper finite index subgroup of itself. This can also be deduced from a recent result of the first author \mbox{
\cite{lazarovich2023finite}}\hskip0pt.)  We conclude that $H=g^{-n}Hg^n$. Therefore $g_1H=g^{-n}Hg^n=g_2H$, contradicting the assumption $\rho(g_1)K\neq \rho(g_2)K$.
\end{proof}

\section{No local cut points}
The goal of this section is to prove the following technical lemma that was used in the proof
of \cref{prop: Hdim of qc separator}:

\begin{lemma}\label{lem: uniform no local cut points}
	Let $H$ be a one-ended hyperbolic group such that $\partial H$ has no local cut points. Let $\partial H$ be equipped with a visual metric $\rho$. There exists $0<\lambda<1$ and $r_0>0$ so that for every $\xi\in \partial H$  and $0<r<r_0$ the following are satisfied:
	\begin{enumerate}
		\item \label{local connectivity} any two points in $B(\xi,\lambda r)$ are connected by a path in $B[\xi,r]$, and 
		\item \label{no local cutpoint} any two points in $\bdr(B[\xi,r])$ which are connected by a path in $B[\xi,r]$ can be connected by a path in $B[\xi,r]-B(\xi,\lambda r)$.
	\end{enumerate}
\end{lemma}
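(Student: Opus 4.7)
Part (1) is essentially the linear local connectivity statement for boundaries of one-ended hyperbolic groups due to Kleiner \cite{kleiner-icm}, which requires only that $\partial H$ be connected and locally connected (the latter is Bestvina--Mess/Bowditch--Swarup); no-local-cut-points is not needed for (1). So the entire content of the lemma is Part (2), and I would attack it by a rescaling/compactness argument exploiting the convergence group action of $H$ on $\partial H$.

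Assume (2) fails. Then there are sequences $\xi_n \in \partial H$, $r_n \to 0$, $\lambda_n \to 0$ and points $a_n, b_n \in \bdr(B[\xi_n, r_n])$ joined by paths $\gamma_n \subset B[\xi_n, r_n]$ such that no path in $B[\xi_n, r_n]\setminus B(\xi_n,\lambda_n r_n)$ joins $a_n$ to $b_n$. Since the triple $(\xi_n, a_n, b_n)$ has all three pairwise $\rho$-distances comparable to $r_n$, and since the $H$-action on $\partial H$ is a cocompact convergence action by uniformly quasi-M\"obius homeomorphisms, one can choose $g_n \in H$ (obtained by translating by the inverse of a coarse barycenter of the ideal triangle $\{\xi_n,a_n,b_n\}$) such that, after passing to a subsequence, $g_n\xi_n\to \xi_\infty$, $g_na_n\to a_\infty$, $g_nb_n\to b_\infty$, these three limits are pairwise distinct, and moreover the following Hausdorff convergences hold: $g_n B[\xi_n,r_n]\to N$, a closed set containing a neighborhood of $\xi_\infty$ with $a_\infty,b_\infty\in \bdr N$, while $g_n B[\xi_n,\lambda_n r_n]\to \{\xi_\infty\}$.

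Next I would pass the paths to Hausdorff limits. By compactness of the Hausdorff topology on closed subsets of $\partial H$, after a further subsequence $g_n\gamma_n\to \Gamma$, where $\Gamma$ is a continuum in $N$ containing $a_\infty,b_\infty,\xi_\infty$ (the last because $\gamma_n$ must meet $B(\xi_n,\lambda_n r_n)$ in order to separate $a_n$ from $b_n$ the wrong way, or rather because there exists some intermediate point on $\gamma_n$ whose $\rho$-distance to $\xi_n$ is at most $\lambda_n r_n$; otherwise $\gamma_n$ itself would be the forbidden path). Now I pick a small fixed $\delta\in (0,1)$ and let $s_1^n < s_2^n$ be the first entry and last exit times of $g_n\gamma_n$ into $g_n B[\xi_n,\delta r_n]$. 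After a subsequence, $g_n\gamma_n(s_i^n)\to q_i$ with $\rho(q_i,\xi_\infty)\simmult \delta$ and $q_i\in \intr(N)$. Using no local cut points at $\xi_\infty$ together with local path-connectedness of $\partial H$, choose a connected open neighborhood $V$ of $\xi_\infty$, compactly contained in $\intr(N)$, with $q_1,q_2\in V$ and $V\setminus\{\xi_\infty\}$ path-connected; join $q_1$ to $q_2$ in $V\setminus\{\xi_\infty\}$ by a path $\alpha$.

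To finish, I would transfer $\alpha$ back to the finite stage: for $n$ large, $g_n B(\xi_n,\lambda_n r_n)\subset V\setminus \alpha$, and $\alpha\subset\intr(N)\subset g_n B[\xi_n,r_n]$. Using a uniform local path-connectedness statement at the scales of $q_1,q_2$ (a consequence of (1)), one joins $g_n\gamma_n(s_i^n)$ to (a point near) $q_i$ by a short path avoiding the inner ball, concatenates with $\alpha$, and with the pieces of $g_n\gamma_n$ outside $g_n B[\xi_n,\delta r_n]$, producing a path from $g_na_n$ to $g_nb_n$ in $g_nB[\xi_n,r_n]\setminus g_nB(\xi_n,\lambda_n r_n)$; applying $g_n^{-1}$ contradicts the choice of $\gamma_n$. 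The main obstacle I foresee is the bookkeeping in Step~3: ensuring that the Hausdorff limits of the two rescaled balls really do produce a genuine punctured neighborhood of $\xi_\infty$ (rather than a set with degenerate interior), which requires controlling the quasi-M\"obius distortion of the $g_n$ on the boundary sphere of $B[\xi_n,r_n]$ uniformly in $n$; this is where the conformal structure of the visual metric and the cocompactness of the action on triples do the real work.
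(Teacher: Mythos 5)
Your overall strategy (a rescaling/compactness argument that upgrades the pointwise ``no local cut point'' statement at a limit point $\xi_\infty$ to the uniform annulus statement, arguing by contradiction) is in the same spirit as the paper's uniformization step, but as written there are two genuine gaps. First, the normalization is not justified: from $a_n,b_n\in\bdr(B[\xi_n,r_n])$ you only get $\rho(\xi_n,a_n)=\rho(\xi_n,b_n)=r_n$, while $\rho(a_n,b_n)$ may be $o(r_n)$, so the claim that all three pairwise distances are comparable to $r_n$ (and hence that a barycenter of the triple $\{\xi_n,a_n,b_n\}$ yields pairwise distinct limits $\xi_\infty,a_\infty,b_\infty$) is false in general. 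The natural normalization is relative to $\xi_n$ and the scale $r_n$ alone, e.g.\ by a group element near the point at distance $\log_a(1/r_n)$ along a geodesic ray $[e,\xi_n)$; fortunately your argument does not really need $a_\infty\neq b_\infty$, only a nondegenerate annular picture around $\xi_\infty$, but that is exactly where the second, more serious gap lies. Second, the transfer of the limit path $\alpha$ (and the short connectors at $q_1,q_2$) back to stage $n$ does not follow from Hausdorff convergence: if $g_nB[\xi_n,r_n]\to N$ in the Hausdorff topology, a compact subset of $\intr(N)$ need \emph{not} be contained in $g_nB[\xi_n,r_n]$ for large $n$ (Hausdorff limits can have interior that the approximating sets never fill). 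So the concluding concatenation, which requires $\alpha\subset g_nB[\xi_n,r_n]$ and the connectors to stay in the closed outer ball while avoiding $g_nB(\xi_n,\lambda_nr_n)$, is unproved. You flag this yourself as ``the main obstacle,'' but it is not bookkeeping: it is the entire quantitative content of the lemma. What is needed is a uniform sandwich of the form $B(g_n\xi_n,s_1)\subseteq g_nB[\xi_n,r_n]$ together with $g_nB[\xi_n,\lambda_nr_n]\subseteq B(g_n\xi_n,\epsilon_n)$ with $\epsilon_n\to0$, i.e.\ uniform control of how group elements distort visual balls at the scale of the normalization.

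This is precisely the difficulty the paper's proof is engineered to avoid: instead of balls it works with shadows $\Sh(x_-,x_+)$, which transform exactly equivariantly ($g\Sh(x_-,x_+)=\Sh(gx_-,gx_+)$), sandwiches visual balls between shadows (its Lemma on facts about shadows, items (4)--(5), which is the quantitative estimate your argument is missing), proves a monotonicity statement for the pairs $(X_-,X_+)$ (so that the annulus property passes to nested configurations), and then runs the compactness argument on pairs of points in the Cayley graph via convergence of geodesics and $H$-invariance, feeding in the pointwise statement only along rays. If you supply the ball-versus-shadow (or equivalent ball-distortion) estimates for your normalizing elements $g_n$, your boundary-side rescaling argument can be completed, but at that point you will essentially have reproduced the technical core of the paper's proof.
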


Let $\calA$ be the set of pairs $(X_-,X_+)$ of non-empty open subsets $X_-,X_+ \subseteq \partial H$ such that $\cl(X_-)\cap \cl(X_+)=\emptyset$. 
    Consider the subset $\calA'\subseteq\calA$ of pairs $(X_-,X_+)\in \calA$ that satisfy the following two properties:
    \begin{enumerate}[label = ($\calA'_{\arabic*}$)]
        \item \label{p:lc} Any two points in $X_+$ can be connected by a path in $\partial H - X_-$.
        \item \label{p:lncp} Any two points in $\bdr (X_-)$ that can be connected by a path in $\partial H - X_-$ can be connected by a path in $\partial H - (X_-\cup X_+)$.
    \end{enumerate}

    \cref{lem: uniform no local cut points} is equivalent to showing that there exists $\lambda\in (0,1),r_0>0$ such that for all $0<r<r_0$ and $\xi\in \partial H$ we have $(\partial H - B[\xi,r],B(\xi,\lambda r))\in \calA'$. 

    \begin{lemma}\label{lem: non-uniform no local cut point}
        Let $H$ be as in \cref{lem: uniform no local cut points}. Then for every $\xi\in \partial H$ and small enough $\epsilon>0$ there exists $\delta>0$ such that $(\partial H - B[\xi,\epsilon],B(\xi,\delta))\in\calA'$.
    \end{lemma}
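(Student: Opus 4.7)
The conditions $(\calA'_1)$ and $(\calA'_2)$ can be handled separately. For $(\calA'_1)$, the plan is to invoke the classical fact that the boundary of a one-ended hyperbolic group is locally path-connected (\cite{bes-mess}, and work of Bowditch--Swenson): this immediately furnishes $\delta_1>0$ such that $B(\xi,\delta_1)$ lies in a path-connected open subset of $B(\xi,\epsilon)\subset B[\xi,\epsilon]$, giving $(\calA'_1)$ for any $\delta\leq\delta_1$.

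For $(\calA'_2)$ the plan is to combine the hypothesis ``no local cut point at $\xi$'' with a compactness/semi-continuity argument to produce a $\delta$ that works uniformly over all pairs $(p,q)$. First I would use the hypothesis together with local path-connectedness to pick nested path-connected open neighborhoods $U_2\subset\overline{U_2}\subset U_1\subset B(\xi,\epsilon/2)$ of $\xi$ with both $U_i\setminus\{\xi\}$ still path-connected (the connectedness of the puncture is exactly the no-local-cut-point condition, and path-connectedness follows because $U_i\setminus\{\xi\}$ is open in a locally path-connected space). For each $(y_1,y_2)\in\partial U_2\times\partial U_2$, both points lie in the path-connected space $U_1\setminus\{\xi\}$, so
\[
\delta^\ast(y_1,y_2)\;=\;\sup\{\delta'>0:\ y_1,y_2\text{ are path-connected in }U_1\setminus B[\xi,\delta']\}
\]
is strictly positive. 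A perturbation argument using local path-connectedness---one joins nearby points to $y_i$ by short paths that remain in $U_1\setminus B[\xi,\delta']$ thanks to $y_i$ having definite distance from $\xi$ and from $\partial H\setminus U_1$---will show $\delta^\ast$ is lower semi-continuous on the compact set $\partial U_2\times\partial U_2$, and hence attains a positive minimum $\delta_0$. I then take $\delta\leq\min(\delta_0,\delta_1)$ small enough that $B[\xi,\delta]\subset U_2$.

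The next step is to construct the required path. Given $\gamma:[0,1]\to B[\xi,\epsilon]$ from $p$ to $q$, I would decompose $\gamma^{-1}(U_2)\subset[0,1]$ into its countable collection of maximal open intervals $(a_i,b_i)$, noting $\gamma(a_i),\gamma(b_i)\in\partial U_2$ since $p,q\notin U_2$. Outside these intervals, $\gamma$ is already in $B[\xi,\epsilon]\setminus U_2\subset B[\xi,\epsilon]\setminus B(\xi,\delta)$ and requires no modification. On each $[a_i,b_i]$ I would replace $\gamma|_{[a_i,b_i]}$ by a path $\beta_i\subset U_1\setminus B[\xi,\delta_0]$ from $\gamma(a_i)$ to $\gamma(b_i)$: for the finitely many ``long'' intervals (whose length exceeds a threshold dictated by the modulus of continuity of $\gamma$) I use the path furnished by the compactness step, and for the remaining ``short'' intervals I use local path-connectedness to choose $\beta_i$ of small diameter within a small ball around $\gamma(a_i)$ that automatically lies in $U_1\setminus B[\xi,\delta_0]$.

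The hard part will be verifying continuity of the concatenated map $\tilde\gamma$ at accumulation points $t_0$ of $\{a_i,b_i\}$. The key observation is that only short intervals can accumulate (long intervals have parameter length bounded below), so near such $t_0$ the relevant $\beta_i$ have small diameter and, combined with uniform continuity of $\gamma$, force $\tilde\gamma(t)\to\tilde\gamma(t_0)=\gamma(t_0)$ as $t\to t_0$ through any family of intervals. This yields the desired continuous path from $p$ to $q$ in $B[\xi,\epsilon]\setminus B(\xi,\delta)$, establishing $(\calA'_2)$.
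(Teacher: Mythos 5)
Your proposal supplies the details that the paper compresses into a single sentence (``The conclusion now follows using standard point-set topology''), and it uses exactly the ingredients the paper cites: connectedness and local path-connectedness of $\partial H$, together with one-endedness of $\partial H\setminus\{\xi\}$. The argument is correct; the only thing I would tighten is to take $\delta$ strictly less than $\delta_0$ (so the long-interval replacements genuinely land in $U_1\setminus B[\xi,\delta_0]$) and to make explicit that the uniform small-diameter joining property used for short intervals comes from compactness together with local path-connectedness, but these are cosmetic.
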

    \begin{proof}
        Since $H$ is one-ended, the boundary $\partial H$ is connected and locally path connected \cite{bowditch-cutpts,swarup-cutpoints}. 
        $\partial H$ does not contain local cut points 
        -- i.e. $\partial H - \xi$ is one-ended.
        The conclusion now follows using standard point-set topology.
    \end{proof}

    \begin{lemma}\label{claim: upward closed} If $(X_-,X_+)\in\calA'$ and $ (Y_-,Y_+)\in \calA$  are such that $X_\pm \supseteq Y_\pm$ then $(Y_-,Y_+)\in \calA'$
    \end{lemma}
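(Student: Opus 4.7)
The plan is to verify the two defining properties of $\calA'$ for the pair $(Y_-,Y_+)$ in turn. Property $(\calA'_1)$ is essentially free from the hypotheses: for $a,b\in Y_+\subseteq X_+$, property $(\calA'_1)$ for $(X_-,X_+)$ produces a path from $a$ to $b$ in $\partial H-X_-$, which lies in $\partial H-Y_-$ by the inclusion $Y_-\subseteq X_-$.

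For property $(\calA'_2)$, given $a,b\in \bdr(Y_-)$ joined by a path $\gamma:[0,1]\to \partial H-Y_-$, I would first note that $a,b\in \cl(Y_-)\subseteq \cl(X_-)$, so disjointness $\cl(X_-)\cap \cl(X_+)=\emptyset$ gives $a,b\notin \cl(X_+)\supseteq Y_+$ and hence $a,b\in \partial H-(Y_-\cup Y_+)$. The aim is then to reroute $\gamma$ to avoid $Y_+$. Writing the open preimage $\gamma^{-1}(Y_+)\subseteq(0,1)$ as a disjoint union of its maximal open intervals $(s_i,t_i)$, the problem reduces to replacing each excursion $\gamma|_{[s_i,t_i]}$ by a path in $\partial H-(Y_-\cup Y_+)$ joining $\gamma(s_i),\gamma(t_i)\in \bdr(Y_+)\subseteq \cl(X_+)\subseteq \partial H-\cl(X_-)$.

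To produce each detour, I would combine both properties of $(X_-,X_+)\in \calA'$ with local path-connectedness of $\partial H$. A short subclaim strengthens $(\calA'_1)$: every point of $\cl(X_+)\cap (\partial H-\cl(X_-))$ admits a path-connected open neighborhood in $\partial H-\cl(X_-)$ that meets $X_+$, so $(\calA'_1)$ for $(X_-,X_+)$ places the whole set $\cl(X_+)\cap (\partial H-X_-)$ into a single path-component $C$ of $\partial H-X_-$. In particular $\gamma(s_i),\gamma(t_i)\in C$. Property $(\calA'_2)$ for $(X_-,X_+)$ would then deliver the detour inside $\partial H-(X_-\cup X_+)\subseteq \partial H-(Y_-\cup Y_+)$, after first connecting $\gamma(s_i),\gamma(t_i)$ into $\bdr(X_-)\cap C$ via local path-connectedness near $\bdr(X_+)$.

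The main obstacle, and the step requiring the most care, is the last one: property $(\calA'_2)$ for $(X_-,X_+)$ only controls pairs of points on $\bdr(X_-)$, whereas $\gamma(s_i),\gamma(t_i)$ may lie deep inside $\cl(X_+)$, and reaching $\bdr(X_-)\cap C$ from there without re-entering $X_+$ depends on the local structure of the component $C$. In the intended metric application, where $X_-$ is the complement of a small closed ball about some $\xi$ and $C$ is essentially that closed ball, this connectivity is transparent; in full generality it should follow from local path-connectedness of $\partial H$ together with the fact that $\cl(X_+)$ is disjoint from $X_-$.
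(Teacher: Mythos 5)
Your verification of $(\calA'_1)$ is correct and matches the paper. The difficulty is with $(\calA'_2)$: the decomposition you choose, cutting $\gamma$ at $\bdr(Y_+)$, cannot be made to work, and the obstacle you flag at the end is not a repairable point-set technicality.

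Once you isolate an excursion $\gamma|_{[s_i,t_i]}$ with endpoints $\gamma(s_i),\gamma(t_i)\in\bdr(Y_+)\subseteq\cl(X_+)$, nothing in the hypotheses places these two points in the same path-component of $\partial H-(Y_-\cup Y_+)$, let alone connects them, or either of them to $\bdr(X_-)$, inside that set. Concretely, take $\partial H=S^2$ (e.g.\ $H$ a cocompact lattice in $\Isom(\Hyp^3)$), let $X_-,X_+$ be two disjoint open round disks, and let $Y_-=X_-$, $Y_+=X_+\ssm\cl(B)$ for a small open disk $B$ compactly contained in $X_+$. Then $(X_-,X_+)\in\calA'$ (here $\partial H\ssm(X_-\cup X_+)$ is a path-connected annulus), $Y_\pm\subseteq X_\pm$, and $(Y_-,Y_+)\in\calA$. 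But $\partial H\ssm(Y_-\cup Y_+)$ is the \emph{disjoint} union of that annulus and the closed disk $\cl(B)$, and $\bdr(Y_+)=\bdr(X_+)\cup\bdr(B)$ straddles both components. A path $\gamma$ starting and ending on $\bdr(X_-)$ that crosses into $X_+$, dips into $B$, comes back out of $B$, and exits $X_+$ produces an excursion into $Y_+$ whose two endpoints lie in different components of $\partial H\ssm(Y_-\cup Y_+)$; no replacement for that single excursion exists, and in particular there is no path from $\bdr(B)$ to $\bdr(X_-)$ avoiding $Y_-\cup Y_+$. Your auxiliary claim that $\cl(X_+)\ssm X_-$ lies in one path-component $C$ of $\partial H\ssm X_-$ is true but does not help: the paths witnessing that connectivity run through $Y_+$, which is exactly what must be avoided. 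There is also a secondary gap: $\gamma^{-1}(Y_+)$ may have infinitely many component intervals, since there is no separation hypothesis between $Y_+$ and $\partial H\ssm X_-$, so even granting the rerouting one would have to control infinitely many detours.

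The paper's proof sidesteps both problems by cutting at $\bdr(X_-)$ rather than at $\bdr(Y_+)$: it replaces the maximal subpaths of $\gamma$ that lie in $\partial H\ssm X_-$ and meet $X_+$. These larger subpaths automatically have endpoints on $\bdr(X_-)$, where $(\calA'_2)$ for $(X_-,X_+)$ applies directly and yields replacements in $\partial H\ssm(X_-\cup X_+)\subseteq\partial H\ssm(Y_-\cup Y_+)$, and there are only finitely many of them because $\cl(X_+)$ and $\cl(X_-)$ are uniformly separated along the compact image of $\gamma$. In the example above one such subpath swallows the entire trip through $X_+$, including the visit to $B$, and is rerouted through the annulus, which is precisely what replacing individual excursions into $Y_+$ cannot achieve. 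The key idea your proposal misses is to \emph{enlarge}, not shrink, the pieces being replaced, so that their endpoints land on $\bdr(X_-)$ where the hypothesis on $(X_-,X_+)$ is usable.
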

    \begin{proof} It is clear that $(Y_-,Y_+)$ satisfies \ref{p:lc}. To show that $(Y_-,Y_+)$ satisfies \ref{p:lncp} consider two points $\xi_1,\xi_2$ on $\bdr(Y_-)$ that are connected by a path $\alpha$ in $\partial H - Y_-$.  The path $\alpha$ has finitely many maximal subpaths $\alpha_1,\dots,\alpha_n$ that are in $\partial H - X_-$ and intersect $X_+$. Note that the endpoints of $\alpha_i$ are in $\bdr(X_-)$. 
    Since $(X_-,X_+)$ satisfies \ref{p:lncp}, there exists paths $\alpha_i'$ in $\partial H - (X_-\cup X_+)$ connecting the same endpoints of $\alpha_i$.
    Replacing every $\alpha_i$ by $\alpha_i'$ in $\alpha$ gives rise to a path in $\partial H - (Y_-\cup Y_+)$ connecting $\xi_1,\xi_2$.
    \end{proof}

    The advantage of working with more general pairs $\calA$ is that it will give us the freedom of replacing balls in the visual metric by other sets, namely \emph{shadows}. We equip $H$ with a word-metric $d$ corresponding to a finite
    generating set. We suppose that $(H,d)$ is $\delta$-hyperbolic.

\begin{defn}\label{def: shadow}
    Let $\calS = \{(x_-,x_+)\in H \times H\;|\; d(x_-,x_+)>20\delta\}.$
    Define $\prec$ on $\calS$ by $(x_-,x_+) \prec (y_-,y_+)$ if $d(\{x_\pm\},\{y_\pm\})>20\delta$, $d(x_-,y_-)<d(x_+,y_-)$, and  a geodesic connecting $y_-,y_+$ passes through the balls of radius $2\delta$ around $x_-,x_+$.
    For $(x_-,x_+)\in \calS$ define the \emph{shadow of $x_+$ from $x_-$} to be the subset $\Sh(x_-,x_+)\subseteq \partial H$ of all endpoints of geodesic rays emanating from $x_-$ and passing through the ball of radius $4\delta$ around $x_+$.
\end{defn}

In the following lemma we collect a few useful facts about shadows, and relate shadows to balls in the visual metric.
\begin{lemma}\label{lem: facts about shadows}
    There exist $a>1$ and $C'\ge 0$ and $r_0>0$ such that for all $D\ge 20\delta$ there exists $\lambda\in (0,1)$ such that for every geodesic ray $\gamma$ emanating from $e$ and ending in some $\xi\in\partial H$:
    \begin{enumerate}[label=(\arabic*)]
        \item \label{f:opposite shadows} For all $(x_-,x_+)\in \calS$, $\cl(\Sh(x_-,x_+)) \cap \cl(\Sh(x_+,x_-))=\emptyset$.
        \item \label{f:shadows and prec}If $(x_-,x_+),(y_-,y_+)\in\calS$ and $(x_\pm)\prec (y_\pm)$ then $\Sh(y_-,y_+)\subseteq \Sh(x_-,x_+)$.
        \item \label{f:shadow basis}The collection 
        $\{\Sh(e,x_+)\}_{x_+ \in \gamma}$ 
        is a neighborhood basis for $\xi$.
       \item\label{f:balls and shadows} For all $x_+\in\gamma$ such that $(e,x_+)\in \calS$,
        \begin{equation*}
            B(\xi,a^{-d(e,x_+)-C'}) \subseteq \Sh(e,x_+) \subseteq B(\xi,a^{-d(e,x_+)+C'}).
        \end{equation*} 
    \item\label{f:distances and ratios} 
    for every $r<r_0$, there exist $x_-,x_+\in \gamma$ with $d(x_-,x_+)= D$ such that
        \begin{equation*}
    \partial H - B[\xi,r] \subseteq \intr(\Sh(x_+,x_-))
    \end{equation*}
    and
    \begin{equation*}
        B(\xi,\lambda r) \subseteq \intr(\Sh (x_-,x_+)).
    \end{equation*}
    \end{enumerate}
\end{lemma}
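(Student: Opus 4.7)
All five statements are standard consequences of $\delta$-hyperbolicity and of the visual metric estimate $\rho(\xi_1,\xi_2) \asymp a^{-\langle \xi_1,\xi_2\rangle_e}$, where the Gromov product is coarsely computed as $\langle \xi_1,\xi_2\rangle_e \simadd d(e,\gamma_{\xi_1\xi_2})$ for any bi-infinite geodesic $\gamma_{\xi_1\xi_2}$ joining $\xi_1,\xi_2$. The plan is to fix the visual parameter $a$ (depending on $\delta$), derive the estimate $C'$ in (4) from the $O(\delta)$ slack in the Gromov product formula and the multiplicative constant in the visual metric, and only then let $\lambda$ in (5) depend on $D$; the threshold $r_0$ is chosen so that the base point of $\gamma$ sits far enough away from the relevant balls. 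I would handle the parts in the listed order: (1)--(3) are essentially geometric, (4) is the key visual estimate, and (5) is a careful assembly of (2) and (4).

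For (1), if $\xi \in \Sh(x_-,x_+)$ and $\xi' \in \Sh(x_+,x_-)$ then a geodesic between them $2\delta$-fellow-travels a broken geodesic $[\xi,x_+]\cup[x_+,x_-]\cup[x_-,\xi')$, placing it within $O(\delta)$ of the segment $[x_-,x_+]$; consequently $\rho(\xi,\xi') \gesim a^{-\min(d(e,x_-),d(e,x_+))}$, and this bound is uniform on closures by continuity. For (2), given $\xi \in \Sh(y_-,y_+)$, the ray $[y_-,\xi)$ $O(\delta)$-fellow-travels the geodesic $[y_-,y_+]$ up to $y_+$, hence passes near both $x_-$ and $x_+$; a geodesic $[x_-,\xi)$ then fellow-travels $[y_-,\xi)$ past $x_-$, so passes through the $4\delta$-ball of $x_+$. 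Part (3) follows because any $\xi'$ close to $\xi$ has large Gromov product with $\xi$, forcing $[e,\xi')$ to track $\gamma$ and hence pass near any chosen $x_+ \in \gamma$.

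Part (4) is the central estimate. If $\xi_1,\xi_2 \in \Sh(e,x_+)$, thin triangles for the ideal triangle $(e,\xi_1,\xi_2)$ together with the fact that both $[e,\xi_i)$ enter $B(x_+,4\delta)$ force a geodesic $[\xi_1,\xi_2]$ to pass within $O(\delta)$ of $x_+$, so $\langle \xi_1,\xi_2\rangle_e \ge d(e,x_+)-C_1$; combined with $\rho \asymp a^{-\langle\cdot,\cdot\rangle_e}$ this gives the upper inclusion in (4) with some $C'$. Conversely, if $\rho(\xi,\xi_1)<a^{-d(e,x_+)-C'}$ then $\langle \xi,\xi_1\rangle_e \ge d(e,x_+)+C_2$, hence $[e,\xi_1)$ tracks $\gamma$ past $x_+$ and so $\xi_1 \in \Sh(e,x_+)$, giving the lower inclusion provided $C'$ is chosen large enough in terms of $\delta$ and the visual metric constants.

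For (5), the plan is to pick $x_-$ on $\gamma$ with $d(e,x_-) = \lceil\log_a(1/r)\rceil + N$ for a constant $N=N(D,\delta,C')$ to be determined, and $x_+$ the point of $\gamma$ at distance $D$ further out, so $d(x_-,x_+)=D$. The containment $B(\xi,\lambda r) \subseteq \intr\Sh(x_-,x_+)$ then follows by applying (4) at $x_+$, which gives $B(\xi, a^{-d(e,x_+)-C'}) \subseteq \Sh(e,x_+)$, together with the inclusion $\Sh(e,x_+) \subseteq \Sh(x_-,x_+)$ obtained by the same fellow-traveling argument as in (2) (since $x_-$ lies on $\gamma$ between $e$ and $x_+$). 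This forces $\lambda \lesim a^{-D-N-C'}$, which is the origin of the $D$-dependence of $\lambda$. For the complementary inclusion $\partial H \ssm B[\xi,r] \subseteq \intr\Sh(x_+,x_-)$, observe that $\xi' \notin B[\xi,r]$ gives $\langle \xi,\xi'\rangle_e \le \log_a(1/r)+C$, so a geodesic $[x_+,\xi')$ must, by thin-triangle tracking of $(x_+,\xi,\xi')$, first retrace $\gamma$ backward past the branching point (which lies within $O(\delta)$ of the level set $d(e,\cdot) \le \log_a(1/r)+C$) and hence pass through $B(x_-,4\delta)$, provided $N$ is chosen larger than the implicit constants. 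The intersection of these two requirements on $N$ determines $N(D,\delta,C')$ and thereby $\lambda(D)$; the interiors are handled by enlarging the $4\delta$ balls in the definition of $\Sh$ very slightly (openness is preserved by strict inequality $\rho<\lambda r$ on one side and $\rho>r$ on the other).

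The main obstacle is keeping the constants clean: $a,C',r_0$ must be independent of $D$, while $N$ and hence $\lambda$ must depend only on $D$. This is achieved by fixing the visual parameters once and for all at the outset, proving (4) with an absolute $C'$, and only at the (5) stage introducing $D$-dependent bookkeeping. A secondary subtlety is ensuring the constructed $x_\pm$ really lie on $\gamma$ rather than off by $O(\delta)$; this is absorbed by rounding $d(e,x_-)$ up to the nearest integer and further enlarging $N$, at the (harmless) cost of shrinking $\lambda$.
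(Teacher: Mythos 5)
Your treatment of parts (1)--(4) matches the paper's (which simply declares (1), (2) clear, cites Bridson--Haefliger for (3), and derives (4) from the visual metric property), and your overall strategy for (5) — place $x_-,x_+$ on $\gamma$ at depth roughly $\log_a(1/r)$ plus constants, take $\lambda$ exponentially small in $D$, and run thin-triangle/Gromov-product arguments for the two inclusions — is the same as the paper's. Your argument for $\partial H - B[\xi,r]\subseteq \intr(\Sh(x_+,x_-))$ is fine: $2\delta$-slimness of ideal triangles puts $x_-$ within $2\delta<4\delta$ of a ray $[x_+,\xi')$ once the branch point of $[e,\xi')$ lies well below $x_-$, and interior membership follows because the bound $2\delta$ is strictly smaller than the $4\delta$ in \cref{def: shadow}.

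The gap is in the second inclusion. The claimed containment $\Sh(e,x_+)\subseteq \Sh(x_-,x_+)$ does not follow from the fellow-traveling argument with the radius fixed at $4\delta$: if a ray $[e,\xi'')$ enters $B(x_+,4\delta)$, slimness only gives a ray $[x_-,\xi'')$ entering $B(x_+,4\delta+2\delta)$, and nothing guarantees that some other ray from $x_-$ does better. Your proposed fix — ``enlarging the $4\delta$ balls in the definition of $\Sh$ very slightly'' — is not available: $\Sh$ is fixed by \cref{def: shadow} and is consumed with exactly that definition in the proof of \cref{lem: uniform no local cut points} (membership in $\calS'$ is phrased via $\intr(\Sh(x_\mp,x_\pm))$), and enlarging $N$ does not help since translating the points along $\gamma$ does not shrink the additive $O(\delta)$ loss. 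The paper repairs precisely this by interposing buffer points: it takes $x_-'=\gamma(-\log_a(r)+C')$, $x_-$, $x_+$, $x_+'$ spaced by $20\delta$, $D$, $20\delta$, sets $\lambda=a^{-D-2C'-40\delta}$, and proves $B(\xi,\lambda r)\subseteq\intr(\Sh(e,x_+'))\subseteq\intr(\Sh(x_-,x_+))$ (and dually $\intr(\Sh(e,x_-'))\cup\intr(\Sh(x_+,x_-))=\partial H$), the $20\delta$ offsets absorbing the slimness losses so that the final approach to $x_+$ is within $\delta$ or $2\delta$, comfortably inside $4\delta$. Your scheme works once you make the analogous adjustment — either insert such a buffer point beyond $x_+$, or argue directly that for $\lambda$ small enough every ray from $x_-$ to a point of $B(\xi,\lambda r)$ tracks $\gamma$ well past $x_+$ and hence comes within $2\delta$ of $x_+$ — but as written this step is unjustified.
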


\begin{proof}
    \ref{f:opposite shadows} and \ref{f:shadows and prec} are clear. \ref{f:shadow basis} See \cite[Chapter III.H, Lemma 3.6]{bridson-haefliger}.
    \ref{f:balls and shadows} follows from the visual metric property.
    To prove \ref{f:distances and ratios}, set $\lambda = a^{-D-2C'-40\delta}$.
    Let 
    \begin{align*}
    x_-' &= \gamma(-\log_a(r)+C'),\\
    x_- &=\gamma(-\log_a(r)+C'+20\delta)\\
    x_+ &= \gamma(-\log_a(r)+C'+20\delta+D)\\
    x_+' &= \gamma(-\log_a(r)+C'+40\delta+D)
    \end{align*}
     By \ref{f:balls and shadows}, we have
     $$\intr\Sh(e,x_-') \subseteq B[\xi,r]$$
    It is easy to verify that $\intr(\Sh(e,x_-')) \cup \intr (\Sh(x_+,x_-)) = \partial H$
    and so 
    \begin{equation*}
    \partial H - B[\xi,r] \subseteq \intr(\Sh(x_+,x_-)).
    \end{equation*}

    By \ref{f:balls and shadows}, we have
    \begin{equation*} 
    B(\xi,\lambda r) \subseteq \intr(\Sh(e,x_+')). 
    \end{equation*}
    It is also easy to verify that 
    $$\intr(\Sh(e,x_+'))\subseteq \intr(\Sh (x_-,x_+)),$$
    and the second inclusion follows.
\end{proof}

In view of \cref{lem: facts about shadows}.\ref{f:distances and ratios} the ratios of radii of balls are related to distances between points in $H$.  

\begin{lemma}\label{lem: upgrading property of pairs}
Let $\calS'\subseteq \calS$. Assume that:
\begin{enumerate}
    \item \label{p: upward closed} (upward closed) if $(x_\pm)\in \calS'$ and $(x_\pm)\prec (y_\pm)$ then $(y_\pm)\in \calS'$,
    \item \label{p: H invariant} ($H$-invariant) if $(x_\pm)\in\calS'$ then $(gx_\pm)\in \calS'$ for all $g\in H$,
    \item \label{p: true for rays} (Eventual) and for every infinite geodesic ray $\gamma$ emanating from $x_-$ there exists $x_+$ on $\gamma$ such that $(x_-,x_+)\in \calS'$.
\end{enumerate}
Then, there exists $D\ge 20\delta$ such that for all $x_-,x_+\in H$ if $d(x_-,x_+)\ge D$ then $(x_-,x_+)\in \calS'$.
\end{lemma}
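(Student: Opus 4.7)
My plan is to first use compactness and upper semi-continuity to upgrade the pointwise ``Eventual'' hypothesis to a uniform distance bound on rays from $e$, and then to combine $H$-invariance with the upward closure of $\calS'$ under $\prec$ to cover every sufficiently far-apart pair.

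\textbf{Step 1 (uniform bound).} For each infinite geodesic ray $\gamma$ from $e$, let
\[
D(\gamma) := \min\{\,t\in\bbN : (e,\gamma(t))\in \calS'\,\},
\]
which is finite by hypothesis \ref{p: true for rays}. The set $\RR_e$ of infinite geodesic rays from $e$, viewed as maps $\bbN\to H$, is compact in the pointwise topology since the Cayley graph is locally finite. If $\gamma'\in \RR_e$ agrees with $\gamma$ on $[0,D(\gamma)]$, then $(e,\gamma'(D(\gamma)))=(e,\gamma(D(\gamma)))\in\calS'$, so $D(\gamma')\le D(\gamma)$. Hence $D$ is upper semi-continuous and attains a maximum $D_{\max}$. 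By the $H$-invariance \ref{p: H invariant}, the same bound works from every starting vertex: for every $x_-\in H$ and every infinite geodesic ray $\gamma$ from $x_-$ there exists $t\le D_{\max}$ with $(x_-,\gamma(t))\in \calS'$.

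\textbf{Step 2 (refinement and upward closure).} Set $D := D_{\max}+40\delta+2$ and suppose $(y_-,y_+)$ satisfies $L := d(y_-,y_+) \ge D$. Fix a geodesic $\sigma$ from $y_-$ to $y_+$, set $z := \sigma(20\delta+1)$, and, using that $H$ is non-elementary hyperbolic, extend the sub-geodesic $\sigma|_{[z,y_+]}$ to an infinite geodesic ray $\gamma$ from $z$ that coincides with $\sigma$ on $[z,y_+]$. Step~1 applied to $z$ produces $s\le D_{\max}$ with $w:=\gamma(s)$ satisfying $(z,w)\in\calS'$. Since $d(z,y_+) = L-20\delta-1 \ge D_{\max}+20\delta+1 > s$, the point $w$ lies on $[z,y_+]\subseteq \sigma$, with $d(w,y_+)\ge 20\delta+1$. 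The pair $(z,w)$ then satisfies $(z,w)\prec(y_-,y_+)$: the four cross-distances $d(z,y_-)=20\delta+1$, $d(z,y_+)\ge D_{\max}+20\delta+1$, $d(w,y_-)=20\delta+1+s>40\delta+1$ (using $s>20\delta$ since $(z,w)\in\calS$), and $d(w,y_+)\ge 20\delta+1$ all exceed $20\delta$; clearly $d(z,y_-)<d(w,y_-)$; and $\sigma$ is itself a geodesic from $y_-$ to $y_+$ passing through both $z$ and $w$. By upward closure \ref{p: upward closed}, $(y_-,y_+)\in\calS'$, proving the lemma with this choice of $D$.

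\textbf{Main obstacle.} The delicate point is that $\prec$ requires all four endpoints to be pairwise at distance more than $20\delta$, so upward closure cannot be used to extend a single pair $(e,\gamma(t))$ further along the same ray. This forces the combination of compactness plus upper semi-continuity in Step~1 with the ``buffer shift'' of size $20\delta+1$ from $y_-$ to $z$ in Step~2; the uniform bound $D_{\max}$ is precisely what guarantees that the resulting $w$ stays safely inside $[y_-,y_+]$ so that all four distance conditions of $\prec$ are satisfied simultaneously. The only external input is the standard fact that in a non-elementary hyperbolic group every finite geodesic segment extends to an infinite geodesic ray.
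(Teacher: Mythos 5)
Your overall strategy is sound and in fact close in spirit to the paper's: your Step 1 is essentially the same compactness-of-rays argument the paper runs (packaged as a uniform bound $D_{\max}$ rather than as a proof by contradiction), and the distance bookkeeping in Step 2 verifying $(z,w)\prec(y_-,y_+)$ is correct as far as it goes. The problem is the one step you dismiss as standard: extending $\sigma|_{[z,y_+]}$ to an infinite geodesic ray from $z$ that coincides with $\sigma$ on $[z,y_+]$. Exact geodesic extension is \emph{not} a standard fact for Cayley graphs; it depends on the generating set and fails in general (already in $\mathbb{Z}$ with generating set $\{2,3\}$ the geodesic from $0$ to $1$ does not extend to any geodesic of length $3$ -- a dead-end phenomenon), and neither hyperbolicity, non-elementarity, nor one-endedness of $H$ restores it. Since the word metric in this section is a fixed but arbitrary one, your Step 2 rests on an unproved (and in general false) claim. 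The coarse substitute that \emph{is} available -- a geodesic ray from $z$ passing within a uniformly bounded distance of $y_+$, so that $\gamma$ only fellow-travels $[z,y_+]$ -- does not drop in either: the point $w=\gamma(s)$ would then only be within roughly $4\delta$ of a geodesic from $y_-$ to $y_+$, while the relation $\prec$ of \cref{def: shadow} demands that such a geodesic pass through the $2\delta$-ball around $w$, so the constants do not close up without reworking the argument.

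The paper avoids this issue by arranging for the auxiliary pair to lie \emph{exactly} on an honest geodesic between the given endpoints: it argues by contradiction with pairs $(e,x^n_+)\notin\calS'$, $d(e,x^n_+)\to\infty$, extracts a limit ray $\gamma$ of the geodesics $[e,x^n_+]$ (local finiteness gives agreement on long initial segments), applies the Eventual hypothesis to a sub-ray of $\gamma$ to produce $(o_-,o_+)\in\calS'$, and notes that for large $n$ both $o_\pm$ lie on $[e,x^n_+]$ itself, so $(o_\pm)\prec(e,x^n_+)$ holds with no extension needed. Your Step 1 can be salvaged by re-routing it the same way: instead of uniformizing over all rays from $e$ and then trying to extend a given segment, run the compactness argument directly on a putative sequence of far-apart pairs outside $\calS'$.
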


\begin{proof} We argue by 
 contradiction. Suppose that there is a sequence $(x^n_-,x^n_+) \notin \calS'$ such that $d(x^n_-,x^n_+) \to \infty$ .    
By \ref{p: H invariant}, we may assume that $x^n_- = e$ for all $n$.
we may also assume that the geodesics $\gamma_n=[e,x^n_+]$ converge to a geodesic ray $\gamma$ starting from $e$.
Pick a point $o_-$ on $\gamma$ at distance $20\delta$ from $e$. 
By \ref{p: true for rays}, there exists a point $o_+$ on $\gamma$ such that $(o_\pm)\in \calS'$ (and $d(e,o_+)>20\delta$).
Since $\gamma_n \to \gamma$, eventually $o_+$ lies on the geodesic $\gamma_n$ and $d(x_+,o_\pm)>20\delta$, and so $(o_\pm)\prec (x_\pm)$. By \ref{p: upward closed} $(x_\pm)\in \calS'$ in contradiction to the assumption.
\end{proof}

\begin{rmk}\label{rmk: rays at e} Note that by \ref{p: H invariant} it suffices to assume property \ref{p: true for rays} for rays emanating from $e$.
\end{rmk}

\begin{proof}[Proof of \cref{lem: uniform no local cut points}]

    Let us define $\calS'\subseteq\calS$ to be the subset of all $(x_-,x_+)\in \calS$ 
    such that $(X_-,X_+)\in\calA'$ where $X_\pm = \intr(\Sh(x_\mp,x_\pm))$.

    Let us verify that $\calS'$ satisfies the conditions of \cref{lem: upgrading property of pairs}:

    \textbf{Upward closed:} Let $(x_\pm) \prec (y_\pm)$.  Then it follows from the definition of $\prec$ that $(x_\mp )\prec (y_\mp)$. By \cref{lem: facts about shadows}.\ref{f:shadows and prec}, $X_\pm \supseteq Y_\pm$ where $X_\pm = \intr(\Sh(x_\mp,x_\pm))$ and $Y_\pm = \intr(\Sh(y_\mp,y_\pm))$. If $(x_\pm)\in \calS'$ then $(X_\pm)\in \calA'$. By \cref{claim: upward closed}, we get $(Y_\pm)\in \calA'$, and so $(y_\pm)\in\calS'$.

    \textbf{$H$-invariant:}  $H$ acts on $\partial H$ by homeomorphisms, and hence $\calA'$ is $H$-invariant. Clearly $g\Sh(x_\mp,x_\pm) = \Sh(gx_\mp,gx_\pm)$ for all $g\in H,(x_-,x_+)\in\calS$. Hence, $\calS'$ is $H$-invariant. 

    \textbf{Eventual:} By \cref{rmk: rays at e} it suffices to prove that $\calS'$ is eventual for rays emanating from $e$. Let $\gamma$ be a geodesic ray starting at $e$ and ending in $\xi\in \partial H$.
    By \ref{f:opposite shadows} and \ref{f:balls and shadows} of \cref{lem: facts about shadows}, and choosing  $\epsilon>0$ small enough we have that for all $x_+$ in $\gamma$ with $d(e,x_+)>20\delta$, 
    $\intr(\Sh(x_+,e)) \subseteq \partial H - B[\xi,\epsilon]$.
    By \cref{lem: non-uniform no local cut point} there exists $\delta>0$ such that $(\partial H - B[\xi,\epsilon],B(\xi,\delta))\in \calA'$. 
    By \cref{lem: facts about shadows}.\ref{f:shadow basis} there exists $x_+$ far enough in $\gamma$ so that $\intr(\Sh(e,x_+))\subseteq B(\xi,\delta)$.
    By \cref{claim: upward closed}, $(\intr(\Sh(x_+,e),\intr(\Sh(e,x_+))\in\calA'$, and so $(e,x_+)\in \calS'$.

    \bigskip

    We have verified that $\calS'$ satisfies the assumptions of \cref{lem: upgrading property of pairs}, and so there exists $D\ge 20\delta$ such that if $d(x_-,x_+)\ge D$ then $(x_-,x_+)\in \calS'$.

    Now, let $\lambda, r_0$ be as in \cref{lem: facts about shadows}.\ref{f:distances and ratios}, then for all $\xi\in\partial H$ and $0<r<r_0$ there exist $x_-,x_+$ on the geodesic $\gamma$ from $e$ to $\xi$ such that $d(x_-,x_+)=D$,  
    \begin{equation*}
    \partial H - B[\xi,r] \subseteq \intr(\Sh(x_+,x_-))\text{ and }B(\xi,\lambda r) \subseteq \intr(\Sh (x_-,x_+)).
    \end{equation*}
   We have $d(x_-,x_+)=D$ and so $(x_-,x_+)\in \calS'$, or equivalently $$(\intr(\Sh(x_+,x_-)),\intr(\Sh(x_-,x_+)))\in\calA'.$$
    By \cref{claim: upward closed}, the pair 
    $(\partial H - B[\xi, r], B(\xi,\lambda r))\in\calA'$. 
\end{proof}

\noindent {\bf Acknowledgments:} We thank Jingyin Huang for helpful comments on a previous draft. We are particularly thankful to an anonymous referee for carefully going through the paper and making many helpful comments.

\end{document}